\newtheorem{thm}{Theorem}[section]
\newtheorem{corollary}[thm]{Corollary}
\newtheorem{lem}[thm]{Lemma}
\newtheorem{prop}[thm]{Proposition}
\theoremstyle{definition}
\newtheorem{defn}[thm]{Definition}
\newtheorem{example}[thm]{Example}
\newtheorem{assumption}[thm]{Assumption}
\theoremstyle{remark}
\newtheorem{rem}[thm]{Remark}
\newcommand\bR{\mathbb{R}}
\newcommand\bZ{\mathbb{Z}}
\newcommand\bS{\mathbb{S}}
\newcommand\bE{\mathbb{E}}
\newcommand\bN{\mathbb{N}}
\newcommand\bP{\mathbb{P}}
\newcommand\fR{\mathbb{R}}
\newcommand\cA{\mathcal{A}}
\newcommand\cB{\mathcal{B}}
\newcommand\cD{\mathcal{D}}
\newcommand\cF{\mathcal{F}}
\newcommand\cI{\mathcal{I}}
\newcommand\cL{\mathcal{L}}
\newcommand\cS{\mathcal{S}}
\newcommand\cM{\mathcal{M}}
\newcommand\cO{\mathcal{O}}
\newcommand\frL{\mathfrak{L}}
\newcommand{\p}{\partial}
\newcommand{\mysection}[1]{\section{#1}
\setcounter{equation}{0}}
\begin{document}

\title[A maximal $L_p$-regularity theory with time measurable nonlocal operators]{A maximal $L_p$-regularity theory to initial value problems with time measurable nonlocal operators generated by additive processes}

\author[J.-H. Choi]{Jae-Hwan Choi}
\address[J.-H. Choi]{Department of Mathematical Sciences, KAIST, 291 Daehak-ro, Yuseong-gu, Daejeon, 34141, Republic of Korea}
\email{jaehwanchoi@kaist.ac.kr}

\author[I. Kim]{Ildoo Kim}
\address[I. Kim]{Department of Mathematics, Korea University, 145 Anam-ro, Seongbuk-gu, Seoul, 02841, Republic of Korea}
\email{waldoo@korea.ac.kr}
\thanks{The authors have been supported by the National Research Foundation of Korea(NRF) grant funded by the Korea government(MSIT) (No.2020R1A2C1A01003959)}

\subjclass[2020]{35S10, 60H30, 47G20, 42B25}

\keywords{Nonlocal operators, Stochastic processes, Integro-differential equations, Littlewood-Payley theory, Maximal $L_p$ regularity.}

\maketitle
\begin{abstract}
Let $Z=(Z_t)_{t\geq0}$ be an additive process with a bounded triplet $(0,0,\Lambda_t)_{t\geq0}$. 
Then the infinitesimal generators of $Z$ is given by time dependent nonlocal operators as follows:
\begin{align*}
\cA_Z(t)u(t,x)
&=\lim_{h \downarrow 0}\frac{\bE[u(t,x+Z_{t+h}-Z_t)-u(t,x)]}{h} \\
&=\int_{\mathbb{R}^d}(u(t,x+y)-u(t,x)-y\cdot \nabla_x u(t,x)1_{|y|\leq 1})\Lambda_t(dy).
\end{align*}
Suppose that for any Schwartz function $\varphi$ on $\bR^d$ whose Fourier transform is in $C_c^{\infty}(B_{c_s} \setminus B_{c_s^{-1}} )$, 
there exist positive constants $N_0$, $N_1$, and $N_2$ such that
\begin{equation*}
    \int_{\bR^d}|\bE[\varphi(x+r^{-1}Z_t)]|dx\leq N_0 e^{- \frac{ N_1 t}{s(r)}},\quad \forall (r,t)\in(0,1)\times[0,T],
\end{equation*}
and
$$
\|\psi^{\mu}(r^{-1}D)\varphi\|_{L_1(\bR^d)}\leq  \frac{N_2}{s(r)},\quad \forall r\in(0,1).
$$
where  $s$ is a scaling function (Definition \ref{20.05.28.13.20}), $c_s$ is a positive constant related to $s$, $\mu$ is a symmetric L\'evy measure on $\bR^d$,
$\psi^{\mu}(r^{-1}D)\varphi(x)= \cF^{-1} \left[ \psi^{\mu}(r^{-1}\xi) \cF[\varphi]\right](x)$ and
$\psi^{\mu}(\xi):=\int_{\bR^d}(e^{iy\cdot\xi}-1-iy\cdot\xi 1_{|y|\leq 1})\mu(dy)$.
In particular, above assumptions hold for L\'evy measures $\Lambda_t$ having a nice lower bound  and $\mu$ satisfying a weak-scaling property (Propositions \ref{22.07.07.16.05}, \ref{22.07.07.16.05-2}, and \ref{22.07.07.16.05-3}). 
We emphasize that there is no regularity condition on L\'evy measures $\Lambda_t$  and they do not have to be symmetric.
In this paper, we establish the $L_p$-solvability to the initial value problem
\begin{equation}
\label{20.07.15.17.02}
\frac{\p u}{\p t}(t,x)=\cA_Z(t)u(t,x),\quad u(0,\cdot)=u_0,\quad (t,x)\in(0,T)\times\mathbb{R}^d,
\end{equation}
where $u_0$ is contained in a scaled Besov space  $B_{p,q}^{s;\gamma-\frac{2}{q}}(\mathbb{R}^d)$ (see Definition \ref{20.08.20.17.26}) with a scaling  function $s$, exponent $p \in (1,\infty)$, $q\in[1,\infty)$, and order $\gamma \in [0,\infty)$.
We show that equation \eqref{20.07.15.17.02} is uniquely solvable and the solution $u$ obtains full-regularity gain from the diffusion generated by a stochastic process $Z$. In other words, there exists a unique solution $u$ to equation \eqref{20.07.15.17.02} in $L_q((0,T);H_p^{\mu;\gamma}(\mathbb{R}^d))$, where $H_p^{\mu;\gamma}(\mathbb{R}^d)$ is a generalized Bessel potential space (see Definition \ref{20.05.31.15.20}). Moreover, the solution $u$ satisfies
$$
\|u\|_{L_q((0,T);H_p^{\mu;\gamma}(\mathbb{R}^d))}\leq N\|u_0\|_{B_{p,q}^{s;\gamma-\frac{2}{q}}(\mathbb{R}^d)},
$$
where $N$ is independent of $u$ and $u_0$. 
We finally remark that our operators $\cA_{Z}(t)$ include logarithmic operators such as $-a(t)\log(1-\Delta)$ (Corollary \ref{cor 20220713 01}) and operators 
 whose symbols are non-smooth such as $-\sum_{j=1}^dc_j(t)(-\Delta)^{\alpha/2}_{x^j}$ (Corollary \ref{20.07.08.15.05}). 
\end{abstract}


\mysection{Introduction}
The diffusion processes and their infinitesimal generators  containing various nonlocal operators are used to describe natural phenomena. For example, $\alpha$-stable processes and the fractional Laplacian operators describe the motion of particles in inhomogeneous environments (see e.g. \cite{brockmann2003levy, garbaczewski2010levy}). A time inhomogeneous L\'evy process which is one of the kinds of additive process and its infinitesimal generators  are used in mathematical finance to predict behavior of time varying financial objects (see e.g. \cite{kluge2005time,koval2005time, vadori2019inhomogeneous}). Accordingly, there has been growing interest about time inhomogeneous diffusion processes and their infinitesimal generators such as time dependent nonlocal operators. However, there are not many results handling the solvability of equations with these operators in a mathematically rigorous level.

In this paper, we mainly discuss the $L_p$-solvability of Partial Differential Equations (PDEs) with nonlocal operators, and introduce related preceding results first. For elliptic and parabolic PDEs with nonlocal operators, see e.g. \cite{dong2012lp,kim2015holder,kim2016lp,kim2016lplq,kim2018lp,kim2019lp,kim2021lq,mikulevivcius1992,mikulevivcius2014,mikulevivcius2017p,mikulevivcius2019cauchy,zhang2013maximal,zhang2013lp}. Furthermore, for parabolic stochastic PDEs with nonlocal operators, see e.g. \cite{chang2012stochastic,kim2013parabolic,gyongy2021lp,kim2012lp,mikulevicius2020cauchy}.
We should also mention that there are many considerable regularity results with nonlocal operators (Harnack inequality, H\"older estimates, ABP type estimates and Littlewood-Paley type inequality etc.). See e.g.
\cite{barles2011holder,bass2002harnack,bass2005harnack,bass2005holder,caffarelli2009regularity,dong2013schauder,guillen2012aleksandrov,kassmann2009priori,kim2012generalization,kim2014global,silvestre2006holder}.

Our main results are inspired by  \cite{kim2019lp,mikulevivcius1992,mikulevivcius2014,mikulevivcius2017p,mikulevivcius2019cauchy,zhang2013maximal,zhang2013lp}.
Our main tasks are to consider time varying generalizations and find general assumptions which are weaker than those given in all previous results. 
In \cite{zhang2013maximal}, the author considered the inhomogeneous problem in the classical Bessel-potential space with a nonlocal operator 
\begin{align}
						\label{2020081501}
\cL^{\nu}f(x)=\int_{\mathbb{R}^d}(f(x+y)-f(x)-y^{(\alpha)}\cdot \nabla f(x))\nu(dy),
\end{align}
where $y^{(\alpha)}:=y(1_{\alpha=1}1_{|y|\leq1}+1_{\alpha\in(1,2)})$, the L\'evy measure $\nu(dy)$ satisfying for $\alpha\in(0,2)$,
$$
\nu_1^{\alpha}(dy)\leq \nu(dy)\leq \nu_2^{\alpha}(dy),\quad 1_{\alpha=1}\int_{r<|y|\leq R}y\nu(dy)=0,\quad \forall 0<r<R<\infty,
$$
and $\nu_i^{\alpha}$ ($i=1,2$) are L\'evy measures of two $\alpha$-stable processes taking the form
$$
\nu_i^{\alpha}(B):=\int_{\bS^{d-1}}\int_{0}^{\infty}\frac{1_B(r\theta)}{r^{1+\alpha}}dr\Sigma_i(d\theta).
$$
Here $\bS^{d-1}$ is the unit sphere centered at the origin in $\mathbb{R}^d$ and $\Sigma_i$ is a non-degenerate finite measure on $\bS^{d-1}$. 
The operator $\cL^{\nu}$ given in \eqref{2020081501} has a nice scaling property since the two L\'evy measures $\nu^\alpha_1$ and $\nu^\alpha_2$ satisfy
$$
\nu_i^\alpha(cB) = c^{-\alpha} \nu_i^\alpha(B) \qquad (i=1,2).
$$
This result is obtained on the basis of the scaling property. However, it is not expected that nonlocal operators have scaling properties in general. Thus it is natural to consider problems with non-scalable or weak-scalable operators.  Note that to discuss an optimal regularity of solutions to equation with these operators, a variant of classical Bessel-potential spaces are required since the classical Bessel-potential space only fit to strongly scalable operators. Similarly, the classical Besov space is not appropriate to handle the initial data from a point of view of optimal regularity. 
Recently, there appeared to be a few results handling weak-scaling nonlocal operators in variants of Bessel-potential spaces and Besov spaces. For instance, in \cite{mikulevivcius2017p,mikulevivcius2019cauchy}, authors considered the solvability of the problem with nonlocal operators in different frequency scaled Bessel-potential space (so called $\psi$-potential space) and Besov space. 
More precisely, under certain conditions on L\'evy measures so called assumptions $A_0(\sigma)$, $\textbf{D}(\kappa,l)$ and $\textbf{B}(\kappa,l)$
(see Remark \ref{20.06.29.15.35}), they obtained the maximal $L_p$-regularity result to the Cauchy problem
\begin{equation*}
\begin{cases}
\frac{\p u}{\p t}(t,x)=Lu(t,x)-\lambda u(t,x)+f(t,x),\quad &(t,x)\in(0,T)\times\mathbb{R}^d,\\
u(0,x)=u_0(x),\quad & x\in\mathbb{R}^d,
\end{cases}
\end{equation*}
where $L$ is a nonlocal operator of the form of \eqref{2020081501}, and $f$ and $u_0$ are in a $\psi$-Bessel potential space and a scaled Besov space, respectively. However, they did not consider time-dependent nonlocal operators.

There are also some results handling time dependent weak-scaling nonlocal operators. One of the simplest form of time dependent nonlocal operators is a stable-like operator, defined by
\begin{equation*}
    L(r,z)f(x):=\int_{\mathbb{R}^d}(f(x+y)-f(x)-y^{(\alpha)}\cdot\nabla f(x))a(r,z,y)|y|^{-d-\alpha}dy,
\end{equation*}
where $\alpha\in(0,2)$, $a$ is a nonnegative measurable function and
$y^{(\alpha)}:=y(1_{\alpha=1}1_{|y|\leq1}+1_{\alpha\in(1,2)})$.
One can consider the related Cauchy problem
\begin{equation*}
\begin{cases}
\frac{\p u}{\p t}(t,x)=L(t,x)u(t,x)+f(t,x),\quad &(t,x)\in(0,T)\times\mathbb{R}^d,\\
u(0,x)=u_0(x),\quad & x\in\mathbb{R}^d.
\end{cases}
\end{equation*}
If $a$ is H\"older continuous in $x$, smooth and $0$-homogeneous in $y$, then $L_p$-estimates for zero initial problem were obtained in \cite{mikulevivcius1992}. In \cite{mikulevivcius2014}, the authors improved the result in \cite{mikulevivcius1992} by assuming
$$
a_0(t,x,y)\leq a(t,x,y)\leq K,\quad \forall (t,x,y)\in[0,\infty)\times\mathbb{R}^d\times\mathbb{R}^d.
$$
Here, $a_0$ is smooth, homogeneous in $y$ and nondegenerate. In \cite{zhang2013lp}, the author provided $L_p$-solvability results for bounded and Dini continuous with respect to $x,y$ by perturbing the result in \cite{zhang2013maximal}.

In \cite{kim2019lp}, the authors deal with time-varying nonlocal operators beyond the stable-like operators and they also provided the $L_p$-solvability of the non-homogeneous problem
\begin{equation*}
\begin{cases}
\frac{\p u}{\p t}(t,x)=\cA_X(t)u(t,x)+f(t,x),\quad &(t,x)\in(0,T)\times\mathbb{R}^d,\\
u(0,x)=0,\quad & x\in\mathbb{R}^d,
\end{cases}
\end{equation*}
where 
$$
\cA_X(t)\varphi(x):=\lim_{h\downarrow0}\frac{\bE[\varphi(x+X_{t+h}-X_t)-\varphi(x)]}{h},
$$and $X$ is an additive process satisfying certain assumptions. Note that the infinitesimal generators of the stochastic process $X_t$, $\cA_X(t)$, contain the operators $L$ of the form of \eqref{2020081501} (see Lemma \ref{20.04.11.14.30}).
However, they only considered zero initial conditions and naturally initiated us to consider non-zero initial value problems.

The aim of this article is to investigate the maximal $L_p$-regularity of solutions to the homogeneous Initial Value Problems (IVPs)
\begin{equation}
\label{20.06.18.17.48}
\begin{cases}
\frac{\p u}{\p t}(t,x)=\cA_Z(t)u(t,x),\quad &(t,x)\in(0,T)\times\mathbb{R}^d,\\
u(0,x)=u_0(x),\quad & x\in\mathbb{R}^d,
\end{cases}
\end{equation}
where $Z$ is an additive process with (locally) bounded triplet $(a(t),0,\Lambda_t)_{t\geq0}$ (see Definition \ref{20.05.01.15.17}).
We emphasize that operators depend on time without any regularity condition with respect to the time variable and L\'evy measures are not symmetric in general. Moreover, our assumptions  are weaker than all previous results mentioned above. To the best of our knowledge, we assert that our assumptions are the weakest conditions in $L_p$-maximal regularity theories for nonlocal operators even though time independent operators are considered. 
Here is a list of our assumptions given on  a locally bounded triplet $(a(t),0,\Lambda_t)_{t\geq0}$.
For an additive process  $Z$ with a bounded triplet $(a(t),0,\Lambda_t)_{t\geq0}$ and a symmetric L\'evy measure $\mu$ on $\bR^d$, 
we suppose that for any $\varphi \in  \cS(\bR^d)$ so that $\cF[\varphi]\in C_c^{\infty}(B_{c_s} \setminus B_{c_s^{-1}} )$, 
there exist positive constants $N_0$, $N_1$, and $N_2$ such that
\begin{equation}
							\label{20.07.14.14.24}
    \int_{\bR^d}|\bE[\varphi(x+r^{-1}Z_t)]|dx\leq N_0e^{-N_1s(r)^{-1}t},\quad \forall (r,t)\in(0,1)\times[0,T],
\end{equation}
and
\begin{align}
					\label{20.07.14.16.18}
\|\psi^{\mu}(r^{-1}D)\varphi\|_{L_1(\bR^d)}\leq N_2s(r)^{-1},\quad \forall r\in(0,1).
\end{align}
where  $s$ is a scaling function (Definition \ref{20.05.28.13.20}),
$$
\psi^{\mu}(r^{-1}D)\varphi(x)= \cF^{-1} \left[ \psi^{\mu}(r^{-1}\xi) \cF[\varphi]\right](x)
$$
and
$$
\psi^{\mu}(\xi)
:=\int_{\bR^d}(e^{iy\cdot\xi}-1-iy\cdot\xi 1_{|y|\leq 1})\mu(dy).
$$
It is an interesting observation that our assumptions hold for L\'evy measures $\Lambda_t$ having a lower bound and $\mu$ satisfying a weak-scaling property (Propositions \ref{22.07.07.16.05}, \ref{22.07.07.16.05-2}, and \ref{22.07.07.16.05-3}). 
More precisely, it suffices to show that there exist a  L\'evy measure $\nu$ and positive constants $N_\nu$ and $c$  such that  for nonnegative Borel measurable function $f$,
\begin{equation}
						\label{20.07.14.14.24-2}
   s(r)\int_{\mathbb{R}^d}f(y)\Lambda_t(r\,dy):=s(r)\int_{\bR^d}f\left(\frac{y}{r}\right)\Lambda_t(dy)\geq\int_{\bR^d}f(y)\nu(dy),\quad \forall r>0, 
\end{equation}
\begin{equation}
						\label{20.07.14.14.24-2-2}
    \inf_{|\xi|=1}\int_{|y|\leq N_\nu }|y\cdot\xi|^2\nu(dy) >0,
\end{equation}
or
\begin{align}
											\notag
&\int_{\bR^d}f\left(y\right)\Lambda_t(dy)\geq\int_{\bR^d}f(y)\nu(dy) \\
											\label{eqn 20220716 01}
&    \inf_{r \in (0, \infty),|\xi|=c} s(r)\int_{|y|\leq N_1}(1-\cos(y \cdot \xi))\nu(r dy) >0, \\
											\label{eqn 20220716 02}
&\sup_{r \in (0, \infty)} \left[ s(r)\int_{\bR^d}(1\wedge|y|^2)\nu(r\,dy) \right] < \infty,
\end{align}
Moreover, it suffices to find a symmetric L\'evy measure $\mu$ satisfying
\begin{equation}
												\label{20.07.14.16.18-2}
  \sup_{r>0}s(r)\int_{\bR^d}\min\{1,|r^{-1}y|^2\}\mu(dy)<\infty.
\end{equation}
More specifically, \eqref{20.07.14.14.24-2}-\eqref{20.07.14.14.24-2-2} and \eqref{eqn 20220716 01}-\eqref{eqn 20220716 02} are  sufficient conditions of \eqref{20.07.14.14.24}. On the other hand, \eqref{20.07.14.16.18-2} is a sufficient condition of \eqref{20.07.14.16.18} and 
\eqref{eqn 20220716 02} also can be a sufficient condition of \eqref{20.07.14.16.18} with 
$$
\mu(dy)=\nu_{sym}(dy)= \frac{\left( \nu(dy) + \nu(-dy) \right)}{2}.
$$
Our main result Theorem \ref{main1} says that, under these assumptions \eqref{20.07.14.14.24} and \eqref{20.07.14.16.18}, for any given $\gamma\in[0,\infty)$, $T\in(0,\infty)$, and
$$
u_0\in B_{p,q}^{s;\gamma-\frac{2}{q}}(\bR^d),\quad p\in(1,\infty),\quad q\in[1,\infty),
$$
there exists a unique solution $u\in L_q((0,T);H_p^{\mu;\gamma}(\bR^d))$ to IVP \eqref{20.06.18.17.48} with the estimate
\begin{equation}
\label{20.06.20.20.54}
\begin{aligned}
\|u\|_{L_q((0,T);H_p^{\mu;\gamma}(\bR^d))}&\approx\left(\int_{0}^T\|u(t,\cdot)\|_{L_p(\bR^d)}^qdt+\int_{0}^T\|(-\psi^{\mu}(D))^{\gamma/2}u(t,\cdot)\|_{L_p(\bR^d)}^qdt\right)^{1/q}\\
&\leq N \|u_0\|_{B_{p,q}^{s;\gamma-\frac{2}{q}}(\bR^d)},
\end{aligned}
\end{equation}
where
$N$ is independent of $u$ and $u_0$.
Here $\mu$ is a symmetric L\'evy measure on $\bR^d$,
$$
\psi^{\mu}(\xi):=\int_{\bR^d}(e^{iy\cdot\xi}-1-iy\cdot\xi 1_{|y|\leq 1})\mu(dy),
$$
and $(-\psi^{\mu}(D))^{\gamma/2}$ is a pseudo-differential operator with the symbol $(-\psi^{\mu}(\xi))^{\gamma/2}$.

The main tools used in the proof of our main theorems are  probabilistic and analytic representations of a solution  and a modified version of the Littlewood-Paley theory. First, in Theorem \ref{20.05.04.10.58}, we prove that for a smooth initial data $u_0$, IVP \eqref{20.06.18.17.48} has a unique strong solution $u$, and $u$ can be represented by
\begin{equation}
\label{20.06.20.17.40}
    u(t,x)=\bE[u_0(x+Z_t)],
\end{equation}
Note that the symbol of $Z$ and the operator $\psi^{\mu}(D)$ are not scalable in general.
Thus one cannot adapt the classical Littlewood-Paley theory to \eqref{20.06.20.17.40}.
 However, \eqref{20.07.14.14.24} allows us to apply a modified version of a Littlewood-Paley theory.
Additionally, \eqref{20.07.14.16.18} enables us to control the lack of scalability of the operator $\psi^{\mu}(D)$, that is, this assumption gives a weak-scaling property on our main operators. 
With the help of these two facts, by applying a modified version of Littlewood-Paley theory to \eqref{20.06.20.17.40}, we finally obtain \eqref{20.06.20.20.54}.

The reason why we only deal with the homogeneous IVPs is because we figure out that assumptions can be considerably weaken if there is no inhomogeneous term in the equation. In particular, we found out that Assumptions $A_0(\sigma)$, $\textbf{D}(\kappa,l)$ and $\textbf{B}(\kappa,l)$ in \cite{mikulevivcius2017p,mikulevivcius2019cauchy} are strongly given to ensure that the kernel satisfies H\"ormander's condition. 
In other words, if we only consider the homogeneous case, then the kernel does not have to satisfy H\"ormander's condition and thus assumptions in previous results can be strongly weaken (see Remark \ref{20.06.29.15.35} for more details). Due to these weakened assumptions, our operators contain a certain pseudo-differential operator $\cA_Z(t)$ whose symbol has a logarithmic growth.
Consider the operator
$$
\cA_Z(t) = -a(t)\log (1-\Delta).
$$
Then the family of L\'evy measures is given by 
$$
\Lambda_t(dy):=a(t)\mu(dy),\quad 0<C^{-1}\leq a(t)\leq C,\quad \forall t \in[0,\infty),
$$
where     
$$
\mu(dy):=J(|y|)dy,\quad J(|y|):=\int_0^{\infty}(4\pi t)^{-d/2}e^{-\frac{|y|^2}{4t}}\frac{e^{-t}}{t}dt.
$$
Then our main assumptions holds for the scaling function 
$$
s(x) := \frac{1}{\log(1+x^{-2})}.
$$
For more details, see Example \ref{exam loga}.

Another interesting example is coordinate-wise fractional Laplacian operators with time measurable coefficients $-\sum_{j=1}^dc_j(t)(-\Delta)^{\alpha/2}_{x^j}$ whose
symbols are not smooth. Consider the one parameter family of L\'evy measures $(\Lambda_t)_{t\geq0}$ given by
\begin{equation*}
\begin{gathered}
\Lambda_{t}(dy):=\sum_{j=1}^d\Lambda_{j,t}(dy),\quad 
\Lambda_{j,t}(dy):=\frac{c_j(t,y)}{|y^j|^{1+\alpha}}dy^j\epsilon_0(dy^1,\cdots,dy^{j-1},dy^{j+1},\cdots,dy^d),
\end{gathered}
\end{equation*}
where $\alpha\in(0,2)$, $c_j(t,y)$ are measurable functions on $[0,\infty)\times\bR^d$ satisfying
$$
0<C^{-1}\leq c_j(t,y) \leq C,\quad \forall (j,t,y)\in\{1,2,\cdots,d\}\times[0,\infty)\times\bR^d,
$$
and $\epsilon_0$ is the centered Dirac measure on $\bR^{d-1}$. If $c_j$ are independent of $y$, i.e. $c_j(t,y)=c_j(t)$ for all $j$, $t$, and $y$, then for any Schwartz function $\varphi$ on $\bR^d$,
\begin{align*}
\cA_Z(t)\varphi(x)
&=-\sum_{j=1}^d c_j(t)\frac{1}{(2\pi)^{d/2}}\int_{\bR^d}\left(|\xi^j|^{\alpha}\cF[\varphi](\xi)e^{i\xi\cdot x} \right)d\xi \\
&=-\sum_{j=1}^dc_j(t)(-\Delta)^{\alpha/2}_{x^j} \varphi(x).
\end{align*}
If we put
$$
\mu(dy):=\frac{1}{|y|^{d+\alpha}}dy,\quad s(r):=r^{\alpha},
$$
then it is easy to check that \eqref{20.07.14.14.24-2} and \eqref{20.07.14.16.18-2} hold. 
For more details, see Example \ref{20.07.05.16.22}.
Thanks to the associate editor and referees, we figured out that the second example,  the coordinate-wise fractional Laplacian operators with time measurable coefficients,  could be covered by the previous results in \cite{ mikulevivcius2017p,mikulevivcius2019cauchy} if all coefficients $c_j$ are independent of time $t$.
However, the first example, logarithmic operators, cannot be covered by the previous results even if coefficients do not depend on $t$.

This article is organized as follows. In Section \ref{20.06.19.15.59}, we provide our main results with definitions of function spaces and solutions.
Specific examples for our results are given in Section \ref{example section}.
In Section \ref{20.06.20.22.06}, the solvability results to the Cauchy problem for smooth data are presented. 
In Section \ref{20.08.20.17.29}, the properties of function spaces are introduced.
In Section \ref{20.08.20.17.31}, we prove a priori estimate and main Theorem \ref{main1}.
Finally, we give all details of proofs of propositions in the appendix.

We finish this section with the notations used in the article.
\begin{itemize}
\item 
Let $\bN$ and $\bZ$ denote the natural number system and the integer number system, respectively.
As usual $\fR^{d}$, $d \in \bN$, stands for the Euclidean space of points $x=(x^{1},...,x^{d})$.
 For $i=1,...,d$, a multi-index $\alpha=(\alpha_{1},...,\alpha_{d})$ with
$\alpha_{i}\in\{0,1,2,...\}$, and function $f$, we set
$$
f_{x^{i}}=\frac{\partial f}{\partial x^{i}}=D_{i}f,\quad
D^{\alpha}f=D_{1}^{\alpha_{1}}\cdot...\cdot D^{\alpha_{d}}_{d}f,\quad |\alpha|:=\sum_{i=1}^d\alpha_i
$$
For $\alpha_i =0$, we define $D^{\alpha_i}_i f = f$. 

\item 
The gradient of a function $f$ is denoted  by 
\[
\nabla f = (D_1f, D_2f, \cdots, D_df).
\]
where $D_{i}f = \frac{\partial f}{\partial x^{i}}$ for $i=1,...,d$.
\item 
For $\cO\subset\bR^d$, we denote by $C^\infty(\cO)$ the space of infinitely differentiable functions on $\cO$, denote by $C_c^\infty(\cO)$ the subspace of $C^\infty(\cO)$ with the compact support and denote by $\cD(\cO)$ the space of distributions.
Let $\cS(\bR^d)$ be the Schwartz space consisting of infinitely differentiable and rapidly decreasing functions on $\bR^d$ and
$\cS'(\bR^d)$ be the space of tempered distributions on $\bR^d$. We say that $f_n \to f$ in $S(\bR^d)$  as $n \to \infty$ if for all multi-indices $\alpha$ and $\beta$ 
\begin{align}
							\label{sch conver}
\sup_{x \in \bR^d} |x^\alpha (D^\beta (f_n-f))(x)| \to 0 \quad \text{as}~ n \to \infty.
\end{align}
\item 
For $p \in [1,\infty)$, a normed space $F$, and a  measure space $(X,\mathcal{M},\mu)$, we denote by $L_{p}(X,\cM,\mu;F)$ the space of all $\mathcal{M}^{\mu}$-measurable functions $u : X \to F$ with the norm 
\[
\left\Vert u\right\Vert _{L_{p}(X,\cM,\mu;F)}:=\left(\int_{X}\left\Vert u(x)\right\Vert _{F}^{p}\mu(dx)\right)^{1/p}<\infty
\]
where $\mathcal{M}^{\mu}$ denotes the completion of $\cM$ with respect to the measure $\mu$. We also denote by $L_{\infty}(X,\cM,\mu;F)$ the space of all $\mathcal{M}^{\mu}$-measurable functions $u : X \to F$ with the norm
$$
\|u\|_{L_{\infty}(X,\cM,\mu;F)}:=\inf\left\{r\geq0 : \mu(\{x\in X:\|u(x)\|_F\geq r\})=0\right\}<\infty.
$$
If there is no confusion for the given measure and $\sigma$-algebra, we usually omit them.
\item 
For $\cO\subseteq \bR^d$, we denote by $\cB(\cO)$ the set of all Borel sets contained in $\cO$.
\item  
For $\cO\subset \fR^d$ and a normed space $F$, we denote by $C(\cO;F)$ the space of all $F$-valued continuous functions $u : \cO \to F$ with the norm 
\[
|u|_{C}:=\sup_{x\in O}|u(x)|_F<\infty.
\]
\item 
We denote the $d$-dimensional Fourier transform of $f$ by 
\[
\cF[f](\xi) := \frac{1}{(2\pi)^{d/2}}\int_{\bR^{d}} e^{- i\xi \cdot x} f(x) dx
\]
and the $d$-dimensional inverse Fourier transform of $f$ by 
\[
\cF^{-1}[f](x) := \frac{1}{(2\pi)^{d/2}}\int_{\bR^{d}} e^{  ix \cdot \xi} f(\xi) d\xi.
\]
\item 
If we write $N=N(a,b,\cdots)$, this means that the constant $N$ depends only on $a,b,\cdots$. 
\item 
For $a,b\in \bR$,
$$
a \wedge b := \min\{a,b\},\quad a \vee b := \max\{a,b\},\quad \lfloor a \rfloor:=\max\{n\in\bZ: n\leq a\}.
$$
\item
For a nonnegative Borel measure $\nu$ and a constant $c>0$, we denote
\begin{equation*}
\begin{gathered}
\tilde{\nu}(dy)=\frac{1}{2}(\nu(dy)+\nu(-dy)),\\
\nu^{c}(dy):=\nu(c\,dy),~ \text{i.e.} ~ \nu^c(A) = \nu(cA) \quad \text{for all Borel sets}~ A.
\end{gathered}
\end{equation*}
\end{itemize}
\mysection{Main results}
\label{20.06.19.15.59}
Fix a complete probability space $(\Omega, \cF, \bP)$ with a filtration $(\cF_t)_{t\geq0}$ satisfying the usual conditions. 
Denote the expectation with respect to $\bP$ by $\bE$. Every stochastic process considered in this article is $\cF_t$-adapted, $\bR^d$-valued, and zero at the initial time.

Recall that a nonnegative Borel measure $\mu$ on $\bR^d$ is called a L\'evy measure if
$$
\mu(\{0\})=0\quad\text{and}\quad L(\mu):=\int_{\bR^d}(1\wedge|y|^2)\mu(dy)<\infty.
$$
We denote by $\frL_d$ the set of all L\'evy measures on $\bR^d$. For a L\'evy process $X$, by the L\'evy-Khintchine representation (e.g. \cite[Theorem 8.1]{sato1999levy}), there exists a L\'evy triplet $(a,A,\mu)$ such that
\begin{equation*}
\begin{gathered}
\bE[e^{i\xi\cdot X_t}]=e^{t\Psi_X(\xi)},\quad\Psi_X(\xi):=ia\cdot\xi-\frac{1}{2}(A\xi\cdot\xi)+\int_{\bR^d}(e^{iy\cdot\xi}-1-iy\cdot\xi1_{|y|\leq1})\mu(dy),
\end{gathered}
\end{equation*}
where $t\in[0,\infty)$, $\xi\in\bR^d$, $a\in\bR^d$, $\mu\in\frL_d$, and $A$ is a nonnegative definite $d\times d$ matrix. 
It is well-known that the infinitesimal generators of a L\'evy process,
$$
\cA_{X_t} f(x)
=\lim_{h\downarrow0}\frac{\bE[f(x+X_{t+h}-X_t)-f(x)]}{h},
$$
 is independent of the time variable since the distributions of the increments of a L\'evy process are stationary. On the other hand, the distributions of the increments of additive processes are not stationary. Therefore, the infinitesimal generators of the additive processes are time dependent in general.


\begin{defn}
\label{20.05.01.15.17}
We say that $Z=(Z_t)_{t \geq 0}$ is an additive process with a (locally in time) bounded triplet $(a(t),A(t),\Lambda_t)_{t\geq0}$ if and only if
\begin{enumerate}[(i)]
    \item $Z$ is a stochastic process with independent increments with respect to the filtration $(\cF_t)_{t\geq0}$, i.e. for all $t>s \geq0$, $Z_t - Z_s$ and $\cF_s$ are independent. 
    \item $a(t):[0,\infty)\to\bR^d$ is $\cB([0,\infty))$-measurable.
    \item $A(t)$ is a nonnegative definite $d\times d$ matrix-valued $\cB([0,\infty))$-measurable function.
    \item $(\Lambda_t)_{t\geq0}$ is
    one parameter measurable family of L\'evy measures on $\bR^d$, i.e. $\Lambda_t \in \frL_d$ for all $t \geq 0$, and the mapping
$$
t \mapsto \int_{\bR^d} f(y) \Lambda_t(dy) ~\text{is measurable for any  integrable function $f$}.
$$
In particular, 
    $$
    t\mapsto L(t;\Lambda):=\int_{\bR^d}(1\wedge|y|^2)\Lambda_t(dy)\quad \text{and} \quad(t,\xi)\mapsto\int_{\bR^d}(e^{iy\cdot\xi}-1-iy\cdot\xi1_{|y|\leq1})\Lambda_{t}(dy)
    $$
    are $\cB([0,\infty))$ and $\cB([0,\infty)\times\bR^d)$-measurable, respectively.
    \item For $0\leq s \leq t$ and $\xi\in\bR^d$,
\begin{equation}
\label{22.07.12.13.26}
\begin{gathered}
\bE[e^{i\xi\cdot(Z_t-Z_s)}]=e^{\int_{s}^t\Psi_Z(r,\xi)dr},\\
\Psi_Z(r,\xi):= ia(r)\cdot\xi-\frac{1}{2}(A(r)\xi\cdot\xi)+\int_{\bR^d}(e^{iy\cdot\xi}-1-iy\cdot\xi1_{|y|\leq1})\Lambda_{r}(dy).
\end{gathered}
\end{equation}
The function $\Psi_Z(r,\xi)$ is called the characteristic exponent of an additive process $Z$. 
\item For each $T\in(0,\infty)$, 
$$
\|a\|_{L_{\infty}([0,T])}+\|A\|_{L_{\infty}([0,T])}+\|L(\cdot;\Lambda)\|_{L_{\infty}([0,T])}<\infty.
$$
\end{enumerate}
\end{defn}

\begin{rem}
\label{20.05.01.17.21}
It is easy to check that Definition \ref{20.05.01.15.17} includes L\'evy processes.
Indeed, if the triplet $(a(t),A(t),\Lambda_t)_{t\geq0}$ is independent of $t$, i.e. $(a(0),A(0),\Lambda_0) =(a(t),A(t),\Lambda_t)$ for all $t>0$, then $Z$ becomes a L\'evy process with the triplet $(a(0),A(0),\Lambda_0)$. 
There are many results characterizing additive processes as a generalization of L\'evy processes.  For details on properties of additive processes, we refer to \cite{sato1999levy}.
\end{rem}

Let $Z$ be an additive process with a bounded triplet $(a(t),0,\Lambda_t)_{t\geq0}$ and define
$$
\cA_Z(t)u(t,x):=\lim_{h\downarrow 0}\frac{\bE[u(t,x+Z_{t+h}-Z_t)-u(t,x)]}{h}.
$$
We consider the following Cauchy problem
\begin{equation}
\label{20.05.06.22.49}
\begin{cases}
\frac{\p u}{\p t}(t,x)=\cA_Z(t)u(t,x),\quad &(t,x)\in(0,T)\times\bR^d,\\
u(0,x)=u_0(x),\quad & x\in\bR^d.
\end{cases}
\end{equation}
We introduce definitions of function spaces handling initial data $u_0$ and solutions $u$ to equation \eqref{20.05.06.22.49}, first.
\begin{defn}[$\psi$-Bessel potential space]
\label{20.05.31.15.20}
For $p\in[1,\infty)$, and $\gamma\in\bR$, $H^{\psi;\gamma}_p(\bR^d)$ denote the class of all tempered distribution $f$ on $\bR^d$ such that
\begin{equation*}
\begin{gathered}
\|f\|_{H_p^{\psi;\gamma}(\bR^d)}:=\|(1-\psi(D))^{\gamma/2}f\|_{L_p(\bR^d)}:=\|\cF^{-1}[(1-\psi)^{\gamma/2}\cF[f]]\|_{L_p(\bR^d)}<\infty.
\end{gathered}
\end{equation*}
where $\psi$ is a real-valued locally integrable function defined on $\bR^d$.
In particular, if $\psi(\xi)=-|\xi|^2$, then $H^{\psi;\gamma}_p(\bR^d)$ becomes the classical Bessel potential space with the exponent $p$ and order $\gamma$. For this classical one, we use the simpler notation $H^{\gamma}_p(\bR^d)$.
Moreover, we can consider a $\psi$-potential spaces related to a symmetric L\'evy measure $\mu$ on $\bR^d$.
Denote 
$$
\psi^{\mu}(\xi)
:=\int_{\bR^d}(e^{iy\cdot\xi}-1-iy\cdot\xi 1_{|y|\leq 1})\mu(dy).
$$
In this case we prefer to use $\psi^{\mu}(\xi)$ instead of $\psi$ to emphasize the related L\'evy measure. 
Moreover, we simply write 
$$
H^{\psi^{\mu};\gamma}_p(\bR^d)
=H^{\mu;\gamma}_p(\bR^d).
$$
\end{defn}

It is a common way to classify L\'evy measures based on ratios of their characteristic exponents (cf. \cite{kim2012generalization,kim2014global,mikulevivcius2019cauchy}). If there is a control on ratios of characteristic exponents, we simply say that they satisfies a weak-scaling property. This type weak-scaling property can be classified by two factor functions called a upper bound and a lower bound functions. Here is a definition of scaling functions which control the L\'evy measures of additive processes. 
\begin{defn}
					\label{20.05.28.13.20}
For measurable functions $s,s_L,s_U,:(0,\infty)\to(0,\infty)$, we say that $\boldsymbol{s}:=(s,s_L,s_U)$ is a scaling triple  with $R_0$ if the triple satisfies followings :
\begin{itemize}
    \item $s_L$ and $s_U$ are non-decreasing functions with
$$
\lim_{r\to0}s_L(r)=\lim_{r\to0}s_U(r)=0,\quad \lim_{R\to\infty}s_L(R)=\lim_{R\to\infty}s_U(R)=\infty.
$$
    \item There exists $R_0\in[1,\infty]$ such that for all $0<r\leq R < R_0$,
\begin{equation}
\label{20.04.28.13.46}
    s_L\left(\frac{R}{r}\right)\leq \frac{s(R)}{s(r)} \leq s_U\left(\frac{R}{r}\right).
\end{equation}
 
\end{itemize}
Separately, $s$ is called a scaling function and $s_L$ (resp. $s_U$) is called a lower (resp. upper) scaling factor of $s$. For a scaling triple $\boldsymbol{s}=(s,s_L,s_U)$, denote 
\begin{align}
							\label{2020082401}
m_s:=\min\{m\in\bN:s_L(2^m)> 1\}~\text{and}~\quad c_s:=2^{m_s} \geq 2.
\end{align}
\end{defn}

\begin{rem}
\label{20.08.13.11.21}
Recall \cite[Definition 1]{mikulevivcius2017p} : A continuous function $\kappa:(0,\infty)\to(0,\infty)$ is a scaling function with scaling factor $l$ if 
 \begin{enumerate}[(i)]
     \item $\lim_{r\to0}\kappa(r)=0$ and $\lim_{R\to\infty}\kappa(R)=\infty$.
     \item $l$ is a nondecreasing continuous function such that $\lim_{\varepsilon\to0}l(\varepsilon)=0$ and
     $$
     \kappa(\varepsilon r)\leq l(\varepsilon)\kappa(r),\quad \forall \varepsilon,r>0.
     $$
 \end{enumerate}
We claim that \cite[Definition 1]{mikulevivcius2017p} implies Definition \ref{20.05.28.13.20} but the converse does not hold in general.
 Suppose that a scaling function $\kappa$ with scaling factor $l$ is given. One can observe that for $0< r \leq R$,
 $$
 \left(l\left(\frac{r}{R}\right)\right)^{-1}\leq \frac{\kappa(R)}{\kappa(r)}\leq l\left(\frac{R}{r}\right).
 $$
 Therefore, $\boldsymbol{\kappa}=(\kappa,\kappa_L,\kappa_U)$ is a scaling triple in the sense of Definition \ref{20.05.28.13.20} with $R_0=\infty$, where
 \begin{equation}
 \label{22.07.07.16.16}
     \kappa_L(r):=\left(l\left(r^{-1}\right)\right)^{-1},\quad \kappa_U(r):=l(r).
 \end{equation}
On the other hand, recall that $s$ can be discontinuous in a scaling triple $\boldsymbol{s}=(s,s_L,s_U)$ and $R_0$ can be a finite positive number. Therefore, our scaling function $s$ is not contained in a class of scaling functions in \cite[Definition 1]{mikulevivcius2017p}. 
 However for the special case $R_0 = \infty$, if there is the extra assumption that all $s$, $s_L$ and $s_U$ are  continuous, then $\kappa$ and $l$ can  be chosen by
$$
\kappa:=s,\quad l(\varepsilon):=\begin{cases}
\frac{1\vee (s_L(1)s_U(1))}{s_L(\varepsilon^{-1})},\quad &\varepsilon\geq1,\\
\frac{s_U(\varepsilon)}{1\wedge (s_L(1)s_U(1))},\quad & 0<\varepsilon<1.
\end{cases}
$$
Then it is obvious that 
$$
\kappa(\varepsilon r)\leq l(\varepsilon)\kappa(r),\quad \forall \varepsilon, r>0.
$$
\end{rem}
In the whole space, the Besov spaces  can be characterized completely by Littlewood-Paley functions (cf. \cite{bergh2012interpolation}).
Since we need a variant of the Besov space whose orders are given in different scales, we present different scaled Littlewood-Paley functions.
\begin{defn}[Littlewood-Paley function] 
									\label{LP function}
For an integer $n\geq2$, let $\Phi_n(\bR^d)$ denote the subset of $\cS(\bR^d)$ with the following properties ; $\zeta\in\Phi_n(\bR^d)$ if and only if
\begin{enumerate}[(i)]
\item $\cF[\zeta]>0$ on $\{\xi\in\bR^d:n^{-1}<|\xi|< n\}$.
\item $supp(\cF[\zeta])=\{\xi\in\bR^d:n^{-1}\leq|\xi|\leq n\}$.
\item For $\xi\in\bR^d\setminus\{0\}$,
$$
\sum_{j=-\infty}^{\infty}\cF[\zeta](n^{-j}\xi)=1.
$$
\end{enumerate}
For $\zeta\in\Phi_n(\bR^d)$, we put $\boldsymbol{\zeta}=(\zeta_0,\zeta_1,\cdots)$, where 
$$
\zeta_0:=\cF^{-1}\left[1-\sum_{j=1}^{\infty}\cF[\zeta](n^{-j}\cdot)\right]
$$
and $\zeta_j:=n^{jd}\zeta(n^j\cdot)$ for all $j\geq1$.
\end{defn}
The existence of Littlewood-Paley functions is guaranteed by \cite[Lemma 6.1.7]{bergh2012interpolation}.
\begin{rem}
\label{20.04.29.20.00}
Let $n\geq2$ be an integer and $\zeta,\iota\in\Phi_n(\bR^d)$. Then $\boldsymbol{\zeta}$ and $\boldsymbol{\iota}$ enjoy the almost orthogonality in the following sense:
\begin{equation}
\label{almost orthogonality}
\begin{gathered}
\cF[\zeta_0](\xi)=\cF[\zeta_0](\xi)\{\cF[\iota_0](\xi)+\cF[\iota](n^{-1}\xi)\}\\
\cF[\zeta](n^{-j}\xi)=\cF[\zeta](n^{-j}\xi)\{\cF[\iota](n^{-j+1}\xi)+\cF[\iota](n^{-j}\xi)+\cF[\iota](n^{-j-1}\xi)\},\quad j\geq1.
\end{gathered}
\end{equation}
\end{rem}

\begin{defn}[A scaled Besov space]
				\label{20.08.20.17.26}
For $p\in[1,\infty)$, $q\in(0,\infty)$, $\gamma\in\bR$, a scaling triple $\boldsymbol{s}=(s,s_L,s_U)$, and $\varphi\in\Phi_{c_s}(\bR^d)$, $B_{p,q}^{s,\varphi;\gamma}(\bR^d)$ denote the class of all tempered distribution $f$ on $\bR^d$ such that
$$
\|f\|_{B_{p,q}^{s,\varphi;\gamma}(\bR^d)}:=\|f\ast\varphi_0\|_{L_p(\bR^d)}+\left(\sum_{j=1}^{\infty}s(c_s^{-j})^{-\frac{\gamma q}{2}}\|f\ast\varphi_j\|_{L_p(\bR^d)}^q\right)^{1/q}<\infty.
$$
Moreover $B_{p,\infty}^{s.\varphi;\gamma}(\bR^d)$ denotes the class of all tempered distribution $f$ on $\bR^d$ such that
$$
\|f\|_{B_{p,\infty}^{s,\varphi;\gamma}(\bR^d)}:=\|f\ast\varphi_0\|_{L_p(\bR^d)}+\sup_{j\in\bN}s(c_s^{-j})^{-\frac{\gamma}{2}}\|f\ast\varphi_j\|_{L_p(\bR^d)}<\infty.
$$
In particular, if  $s(x)=x^2$ and $\varphi\in\Phi_{2}(\bR^d)$, then $B_{p,q}^{s,\varphi;\gamma}(\bR^d)$ becomes the classical Besov space and we use the simpler notation $B_{p,q}^{\gamma}(\bR^d)$.
Moreover,  the choice of $\varphi\in\Phi_{c_s}(\bR^d)$ is not crucial, which will be proved in Proposition \ref{20.04.29.20.24} $(iv)$. 
Hence, we sometimes write $B_{p,q}^{s;\gamma}(\bR^d)$ instead of $B_{p,q}^{s,\varphi;\gamma}(\bR^d)$ for the simplicity of the notation.
\end{defn}
Properties of the $\psi$-Bessel potential spaces and the scaled Besov spaces are given in Section \ref{20.08.20.17.29}.
In particular, we prove that  $C_c^{\infty}(\bR^d)$ is dense in both $H^{\mu;\gamma}_p(\bR^d)$ and $B_{p,q}^{s.\varphi;\gamma}(\bR^d)$.
We finally introduce the definition of a (strong) solution $u$ in these $\psi$-Bessel potential and scaled Besov spaces. 
\begin{defn}[Solution]
\label{20.06.06.15.50}
Let $T\in(0,\infty)$, $p\in[1,\infty)$, $q\in(0,\infty)$, $\gamma\in\bR$, $\mu$ be a symmetric L\'evy measure on $\bR^d$, $Z$ be an additive process with a bounded triplet $(a(t),0,\Lambda_t)_{t\geq0}$, and $\boldsymbol{s}=(s,s_L,s_U)$ be a scaling triple. For given 
$$f\in L_q((0,T);H_p^{\mu;\gamma}(\bR^d)),\quad u_0\in B_{p,q}^{s,\varphi;\gamma+2-\frac{2}{q}}(\bR^d),$$
we say that $u\in L_q((0,T);H_p^{\mu;\gamma+2}(\bR^d))$ is a solution to the Cauchy problem
\begin{equation*}
\begin{cases}
\frac{\p u}{\p t}(t,x)=\cA_Z(t)u(t,x)+f(t,x),\quad &(t,x)\in(0,T)\times\bR^d,\\
u(0,x)=u_0(x),\quad & x\in\bR^d,
\end{cases}
\end{equation*}
if there exists a sequence of smooth functions $u_n\in\tilde{C}^{1,\infty}([0,T]\times\bR^d)$ (see Definition \ref{20.05.17.14.08}) such that $u_n(0,\cdot)\in C_c^{\infty}(\bR^d)$,
\begin{equation*}
\begin{gathered}
\frac{\p u_n}{\p t}-\cA_Z(t)u_n\to f\quad\text{in}\quad L_q((0,T);H_p^{\mu;\gamma}(\bR^d)),\\
u_n(0,\cdot)\to u_0\quad\text{in}\quad B_{p,q}^{s,\varphi;\gamma+2-\frac{2}{q}}(\bR^d),
\end{gathered}
\end{equation*}
and
$$
u_n\to u \quad\text{in}\quad L_q((0,T);H_p^{\mu;\gamma+2}(\bR^d))
$$
as $n\to\infty$.
\end{defn}
Next we introduce our main assumptions given on an additive process $Z$, a L\'evy measure $\mu$, and a scaling function $s$. Denote
$$
B_r:=\{x\in\bR^d:|x|<r\}.
$$
\begin{assumption}
						\label{22.06.26.18.46}
Let  $Z$ be an additive process and $s$ be a scaling function.
We assume that for any $\varphi \in  \cS(\bR^d)$ so that $\cF[\varphi]\in C_c^{\infty}(B_{c_s} \setminus B_{c_s^{-1}} )$, there exist positive constants $N_0$ and $N_1$ such that 
\begin{equation}
\label{22.07.12.13.50}
    \int_{\bR^d}|\bE[\varphi(x+r^{-1}Z_t)]|dx\leq N_0e^{-\frac{N_1 t}{s(r)}},\quad \forall (r,t)\in(0,1)\times[0,T].
\end{equation}
\end{assumption}
\begin{assumption}
					\label{22.06.26.18.47}
Let $\mu$ be a a symmetric L\'evy measure on $\bR^d$  and $s$ be a scaling function.
We assume that for any $\varphi \in  \cS(\bR^d)$ so that $\cF[\varphi]\in C_c^\infty(B_{c_s} \setminus B_{c_s^{-1}} )$, there exists a positive constant $N_2$ such that 
$$
\|\psi^{\mu}(r^{-1}D)\varphi\|_{L_1(\bR^d)}\leq \frac{N_2}{s(r)},\quad \forall r\in(0,1).
$$
\end{assumption}
\begin{rem}
								\label{rem 2022071301}
\begin{enumerate}[(i)]
\item If an additive process $Z$ has a bounded triplet $(a(t),0,\Lambda_t)_{t\geq0}$, then Assumption \ref{22.06.26.18.46} can be written with respect to the characteristic exponent of $Z$.
Indeed for any $\varphi \in  \cS(\bR^d)$ so that $\cF[\varphi]\in C_c^\infty(B_{c_s} \setminus B_{c_s^{-1}}) $, 
by applying elementary properties of the Fourier transform, \eqref{22.07.12.13.26}, and Fubini's theorem,
we have
$$
\cF[\bE[\varphi(\cdot+r^{-1}Z_t)]](\xi)=\cF[\varphi](\xi)\bE[e^{i\xi\cdot r^{-1}Z_t}]=\cF[\varphi](\xi)\exp\left(\int_0^t\Psi_Z(s,r^{-1}\xi)ds\right).
$$
Therefore, Assumption \ref{22.06.26.18.46} becomes
\begin{align}
\label{22.07.12.13.49}
   \int_{\bR^d}\left|\cF^{-1}\left[\exp\left(\int_0^t\Psi_Z(s,r^{-1}\cdot)ds\right)\cF[\varphi]\right](x)\right|dx
   = \int_{\bR^d}|\bE[\varphi(x+r^{-1}Z_t)]|dx
   \leq N_0e^{-N_1s(r)^{-1}t}.
\end{align}
In our proof of main estimates, we use \eqref{22.07.12.13.49} instead of \eqref{22.07.12.13.50} since we handle additive processes with a bounded triplet.
Moreover, we show that if an additive process $Z$ has a nice lower bound $\nu$ with respect to $s$, then Assumption \ref{22.06.26.18.46} holds (see Propositions \ref{22.07.07.16.05} and \ref{22.07.07.16.05-3}).

\item An interesting sufficient condition of Assumption \ref{22.06.26.18.47} is a weak scaling property. 
If a symmetric L\'evy measure $\mu$ on $\bR^d$ satisfies a weak scaling property with respect to $s$, then Assumption \ref{22.06.26.18.47} holds (see Proposition \ref{22.07.07.16.05-2}).
\end{enumerate}
\end{rem}


Now, we state the main result of this article and the proof is given in Section \ref{20.08.20.17.31}.
\begin{thm}
\label{main1}
Let $s$ be a scaling function, $Z$ be an additive process with a bounded triplet $(a(t),0,\Lambda_t)_{t\geq0}$ satisfying Assumption \ref{22.06.26.18.46}, and $\mu$ be a symmetric L\'evy measure on $\bR^d$ satisfying Assumption \ref{22.06.26.18.47}. Then for all $p\in(1,\infty)$, $q\in[1,\infty)$, $T\in(0,\infty)$, $\gamma\in[0,\infty)$, and $u_0\in B_{p,q}^{s,\varphi;\gamma-\frac{2}{q}}(\bR^d)$, the initial value problem
\begin{equation*}
\begin{cases}
\frac{\p u}{\p t}(t,x)=\cA_Z(t)u(t,x),\quad &(t,x)\in(0,T)\times\bR^d,\\
u(0,x)=u_0(x),\quad & x\in\bR^d,
\end{cases}
\end{equation*}
has a unique solution $u\in L_q((0,T);H_p^{\mu;\gamma}(\bR^d))$ with
\begin{equation}
									\label{2020082301}
    \|u\|_{L_q((0,T);H_p^{\mu;\gamma}(\bR^d))}\leq N\|u_0\|_{B_{p,q}^{s,\varphi;\gamma-\frac{2}{q}}(\bR^d)},
\end{equation}
where $N$ is independent of $u$ and $u_0$.
\end{thm}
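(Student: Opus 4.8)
The plan is to establish the a priori estimate \eqref{2020082301} for smooth initial data, then close the argument by density and uniqueness. The backbone is the representation formula \eqref{20.06.20.17.40}: for $u_0 \in C_c^\infty(\bR^d)$, Theorem \ref{20.05.04.10.58} gives the solution as $u(t,x) = \bE[u_0(x+Z_t)] = \int_{\bR^d} p(0,t,y) u_0(x+y)\,dy$, so everything reduces to estimating the convolution operator with kernel $p(0,t,\cdot)$ in the scale of the $\psi^{\tilde\mu}$-potential spaces, uniformly against the Littlewood-Paley decomposition of $u_0$ in the scaled Besov norm. I would first decompose $u_0 = \sum_{k\geq 0} u_0 \ast \varphi_k$ and correspondingly write $u(t,\cdot) = \sum_{k\geq 0} u(t,\cdot)\ast\varphi_k$, where each dyadic piece solves the same equation with a frequency-localized initial datum.

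**The heart of the matter** is a frequency-by-frequency smoothing estimate: for the $k$-th Littlewood-Paley block, one needs
$$
\|(-\psi^{\tilde\mu}(D))^{\gamma/2}\big(u(t,\cdot)\ast\varphi_k\big)\|_{L_p(\bR^d)} \leq N\, s(c_s^{-k})^{\gamma/2}\, g(t,c_s^{-k})\,\|u_0\ast\tilde\varphi_k\|_{L_p(\bR^d)}
$$
for a suitable decay profile $g$ in $t$, where $\tilde\varphi_k$ is a fattened version of $\varphi_k$ from the almost-orthogonality relation \eqref{almost orthogonality}. Here Assumption \ref{nondegenerate} enters through the density estimate (Theorem \ref{density estimate 1}): writing $Z = Z_1 + Z_2$ with $Z_2$ carrying the L\'evy measure $\nu$, the nondegeneracy $\inf_{|\xi|=1}\int |y\cdot\xi|^2\nu(dy) > 0$ produces Gaussian-type (or rather $s$-scaled) decay of $p(0,t,\cdot)$ on the frequency annulus, giving a factor that behaves like $\exp(-c\, t\, s(c_s^{-k})^{-1})$ on the $k$-th block — this is the "full regularity gain." Assumption \ref{boundedness} is used in reverse: it guarantees that $(-\psi^{\tilde\mu}(D))^{\gamma/2}$ amplifies the $k$-th block by at most $N s(c_s^{-k})^{-\gamma/2}$, so the weak-scaling bound on $\mu$ converts the time-decay into the correct power of $s(c_s^{-k})$. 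I would assemble these via a Fourier-multiplier / Mikhlin-type argument on each annulus (the symbol $\exp(\int_0^t \Psi_Z(r,\cdot)dr)$ times $(-\psi^{\tilde\mu})^{\gamma/2}$ times $\cF[\tilde\varphi_k]$), controlling $L_p\to L_p$ norms by $L_1$ norms of inverse Fourier transforms, which the density estimates make summable.

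**Next** I would sum over $k$: raising the block estimate to the $q$-th power, integrating in $t$ over $(0,T)$, and using $\int_0^\infty e^{-c\,t\,\lambda^{-1}}\,dt \approx \lambda$ type bounds (or their $s$-analogue) one gets
$$
\int_0^T \|(-\psi^{\tilde\mu}(D))^{\gamma/2} u(t,\cdot)\|_{L_p}^q\,dt \;\lesssim\; \sum_{k\geq 1} s(c_s^{-k})^{\gamma q/2}\, s(c_s^{-k})^{-q/2}\cdot s(c_s^{-k})^{-\gamma q/2}\,\|u_0\ast\tilde\varphi_k\|_{L_p}^q
$$
up to constants depending on $T$ polynomially, which is exactly $\|u_0\|_{B_{p,q}^{s,\varphi;\gamma-2/q}}^q$; the $L_p$-in-time part of $\|u\|_{L_q((0,T);L_p)}$ is handled the same way with $\gamma = 0$, and the two combine into the $\psi$-potential norm on the left by the equivalence noted in \eqref{20.06.20.20.54}. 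The polynomial $(1+T^2)$ dependence comes from tracking the worst block (the low-frequency blocks where the exponential decay is slow contribute the $T$-growth, squared once from the two norm contributions). Passing from $C_c^\infty$ to general $u_0 \in B_{p,q}^{s,\varphi;\gamma-2/q}$ uses the density of $C_c^\infty$ proved in Section \ref{20.08.20.17.29} together with the a priori estimate to get a Cauchy sequence of smooth solutions, whose limit is the desired solution in the sense of Definition \ref{20.06.06.15.50}; uniqueness follows because any solution is, by definition, an $L_q((0,T);H_p^{\mu;\gamma})$-limit of smooth solutions to which the a priori estimate applies, forcing the limit to be zero when $u_0 = 0$.

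**The main obstacle** I anticipate is the frequency-localized density estimate under the bare minimum hypotheses — no symmetry, no regularity of $\Lambda_t$ in either variable, only the lower bound \eqref{20.08.11.14.28} and measurability in $t$. One cannot differentiate the symbol in any naive way, so the Mikhlin-type control of $\|\cF^{-1}[\exp(\int_0^t\Psi_Z(r,\cdot)dr)\,\cF[\varphi_k]]\|_{L_1}$ must be extracted purely from the quadratic lower bound supplied by $\nu$ on the $Z_2$ part, absorbing the (possibly wild) $Z_1$ part by the trivial $L_1 = 1$ bound on a probability density and the independence splitting $p(0,t,\cdot) = p_1(0,t,\cdot) \ast p_2(0,t,\cdot)$. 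Getting the $t$-decay uniform in $k$ and integrable to the right power of $s(c_s^{-k})$, while keeping the constant independent of $T$, is the delicate point — but this is precisely what Theorem \ref{density estimate 1} is designed to deliver, so the real work is organizing the Littlewood-Paley bookkeeping around it rather than proving anything genuinely new at this stage.
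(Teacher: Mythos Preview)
Your overall strategy matches the paper's: represent $u$ via $\bE[u_0(x+Z_t)]$, decompose with $\varphi_k$, use the density estimate (Theorem \ref{density estimate 1}) for exponential decay on blocks, Assumption \ref{boundedness} for the $\psi^{\tilde\mu}$-weight, then pass to general $u_0$ by density and invoke Corollary \ref{20.06.06.16.06} for uniqueness. Two places, however, are glossed over in a way that hides real work.

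First, the step ``raising the block estimate to the $q$-th power, integrating in $t$'' and arriving at a single sum $\sum_k(\ldots)$ is not free for $q>1$. After Minkowski you have $\|(-\psi^{\tilde\mu}(D))^{\gamma/2}u(t,\cdot)\|_{L_p}\leq\sum_k a_k(t)$ with $a_k(t)\approx s(c_s^{-k})^{-\gamma/2}e^{-ct/s(c_s^{-k})}\|u_0\ast\varphi_k\|_{L_p}$, and you must control $\int_0^T\bigl(\sum_k a_k(t)\bigr)^q dt$, not $\sum_k\int_0^T a_k(t)^q\,dt$. The paper (Lemma \ref{a priori lem}) handles this by splitting the $k$-sum at the critical scale $c_s^{k\theta_1}t\sim 1$, applying H\"older in $k$ with carefully chosen auxiliary weights on each piece, and treating $t\leq 1$ and $t>1$ separately; this is where the factor $(1+T^2)$ actually originates and is the computational heart of the a~priori estimate. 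Your heuristic $\int_0^\infty e^{-ct/\lambda}\,dt\approx\lambda$ gives the right power-counting but does not by itself close the inequality.

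Second, for non-integer $\gamma$ your claim that Assumption \ref{boundedness} makes $(-\psi^{\tilde\mu}(D))^{\gamma/2}$ amplify the $k$-th block by at most $N\,s(c_s^{-k})^{-\gamma/2}$ as an $L_p\to L_p$ bound is problematic: Assumption \ref{boundedness} gives only an \emph{upper} bound on $-\psi^{\tilde\mu}$, no lower bound, so $-\psi^{\tilde\mu}$ may vanish on the annulus and $(-\psi^{\tilde\mu})^{\gamma/2}$ need not have controllable derivatives there. The paper sidesteps this by first proving the estimate for $\gamma=2n$ an even integer---where $(-\psi^{\tilde\mu}(D))^n$ has the iterated nonlocal representation and \eqref{20.04.10.16.45} combined with Assumption \ref{boundedness} yields the uniform $L_1$ kernel bound directly---and only then passing to general $\gamma\geq 0$ by complex interpolation between $H_p^{\mu;2m}$ and $H_p^{\mu;2m+2}$ (Proposition \ref{properties}($ix$)) and between the corresponding scaled Besov spaces (Proposition \ref{20.04.29.20.24}($viii$)). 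You should insert this two-step route (even integers first, then interpolate) rather than attack fractional $\gamma$ head-on.
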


\begin{rem}
Note that ${B_{p,q}^{s,\varphi;\gamma-\frac{2}{q}}(\bR^d)}$ is a quasi Banach space and a priori estimate \eqref{2020082301} holds  (see Lemma  \ref{20.08.20.17.31}) even for $q \in (0,1)$ and $p \in [1,\infty)$.
However,  Calder\'on's complex interpolation does not work on ${B_{p,q}^{s,\varphi;\gamma-2/q}(\bR^d)}$ if $q \in (0,1)$ since this interpolation heavily depends on the duality property. Nevertheless, if there is the additional assumption that $\gamma$ is an even nonnegative integer, Theorem \ref{main1} still holds even though $q \in (0,1)$ and $p \in [1,\infty)$.
\end{rem}

\mysection{Examples}
								\label{example section}
In this section, we specify interesting examples which cannot be covered by previous results. 
First, we introduce a lower weak scaling property of the logarithmic function near zero.
For $ a \in (0,\infty)$, we use $\log a$ instead of $\log_2a$ for a simpler notation. 
\begin{lem}
\label{22.07.12.23.26}
For $0<r\leq R\leq1$,
\begin{equation}
\label{22.07.12.16.13}
 \log\left(1+\frac{R}{r}\right)\leq\frac{\log(1+R)}{\log(1+r)}.
\end{equation}
\end{lem}
\begin{proof}
Fix $ r \in (0,1]$ and let
$$
f_r(x):=\frac{\log(1+x)}{\log(1+r)}-\log\left(1+\frac{x}{r}\right).
$$
Then the derivative of $f_r$ is
\begin{align*}
\frac{df_r}{dx}(x)=\frac{1}{\log(1+r)(1+x)}-\frac{1}{r+x}\geq\frac{1}{r(1+x)}-\frac{1}{r+x}=\frac{(1-r)x}{r(1+x)(r+x)}\geq 0
\end{align*}
for all $x \geq 0$. 
Since $f_r(0)=0$, the lemma is proved.
\end{proof}

\begin{example}[The logarithmic function]
							\label{exam loga}
The first example is the logarithmic function
$$
g(\lambda):=\log(1+\lambda).
$$
These are well-known facts (cf. \cite{schbern}) on the logarithmic function $g$ :
\begin{enumerate}[(1)]
    \item The function $g$ is a Bernstein function, that is, $g:(0,\infty)\to(0,\infty)$ satisfies
    \begin{equation}
    \label{22.07.12.14.15}
        (-1)^n\frac{d^ng}{d\lambda^n}(\lambda)\leq0,\quad \forall \lambda>0,n\in\bN.
    \end{equation}
    \item By \eqref{22.07.12.14.15}, $g$ is concave on $[0,\infty)$, thus,
    \begin{equation}
    \label{22.07.12.23.47}
        \frac{g(R)}{g(r)}\leq\frac{R}{r},\quad 0<r\leq R<\infty.
    \end{equation}
    \item The function $g$ has a following integral representation ;
    \begin{equation}
    \label{22.07.06.13.28}
        g(\lambda)=\int_0^{\infty}(1-e^{-t\lambda})\frac{e^{-t}}{t}dt.
    \end{equation}
    \item There exists a nonnegative real-valued L\'evy process $S=(S_t)_{t\geq0}$ such that
    $$
    \bE[e^{-\lambda S_t}]=e^{-tg(\lambda)}.
    $$
    \item The $d$-dimensional subordinate Brownian motion $X_t:=W_{S_t}$ is a L\'evy process on $\bR^d$ with symbol
    $$
    \psi^{\mu}(\xi):=-g(|\xi|^2)=\int_{\bR^d}(e^{iy\cdot\xi}-1-iy\cdot\xi1_{|y|\leq1})\mu(dy)
    $$
    and its infinitesimal generator is
    \begin{align*}
        \cA_Xf(x)&=\lim_{t\downarrow0}\frac{\bE[f(x+X_t)]-f(x)}{t}=\cF^{-1}[-g(|\cdot|^2)\cF[f]](x)\\
        &=\int_{\bR^d}(f(x+y)-f(x)-y\cdot\nabla f(x)1_{|y|\leq1})\mu(dy),
    \end{align*}
    where $W=(W_t)_{t\geq0}$ is a $d$-dimensional Brownian motion independent of $S$ and
    $$
\mu(dy):=J(|y|)dy,\quad J(|y|):=\int_0^{\infty}(4\pi t)^{-d/2}e^{-\frac{|y|^2}{4t}}\frac{e^{-t}}{t}dt.
$$
    \item Using \eqref{22.07.06.13.28}, we have
    \begin{equation}
    \label{22.07.06.14.11}
        \lambda^n\left|\frac{d^ng}{d\lambda^n}(\lambda)\right|\leq N(n)g(\lambda),\quad \forall \lambda>0,n\in\bN.
    \end{equation}
\end{enumerate}
First we show that the operators $-g(-\Delta)$ cannot be treated in the previous results. 
Suppose that there exists a symmetric L\'evy measure $\mu_0$ such that $\mu(dy)\geq \mu_0(dy)$. Then
$$
\int_{\bR^d}|\xi|^4\exp(\psi^{\mu_0}(\xi))d\xi\geq \int_{\bR^d}|\xi|^4\exp(-g(|\xi|^2))d\xi=\int_{\bR^d}\frac{|\xi|^4}{1+|\xi|^2}d\xi=+\infty.
$$
This implies that Assumption $A_0(\sigma)$ in \cite{mikulevivcius2017p,mikulevivcius2019cauchy} cannot hold for the logarithmic function $g$. 
In \cite[Assumption 2.2]{kim2019lp}, the authors assume that there exist $\delta_3\geq \delta_2>0$ and $N_2,N_3>0$ such that
\begin{equation}
		\label{22.06.18.15.37}
N_2\left(\frac{\lambda_2}{\lambda_1}\right)^{\delta_2}\leq\frac{\phi(\lambda_2)}{\phi(\lambda_1)}\leq N_3\left(\frac{\lambda_2}{\lambda_1}\right)^{\delta_3},\quad \forall \lambda_2\geq\lambda_1>0,
\end{equation}
where $\phi$ denotes a characteristic exponent of a stochastic process. 
Moreover, an inequality similar to \eqref{22.06.18.15.37} is proved in \cite[Lemma 1]{mikulevicius2020cauchy}.
However, the left hand side of \eqref{22.06.18.15.37} cannot hold for the logarithmic function $g$. Therefore, the results in \cite{kim2019lp,mikulevivcius2017p,mikulevivcius2019cauchy,mikulevicius2020cauchy} cannot cover the logarithmic operators. For more detail information on the operator $-g(-\Delta)$, we refer the reader to \cite{feulefack2021logarithmic}.

Now, we verify that our main theorem (Theorem \ref{main1}) even works for this case. 
Put the scaling triple as follows:
$$
s(x):=\frac{1}{\log(1+x^{-2})},\quad s_L(x):=\log(1+x^2),\quad s_U(x):=x^2.
$$
Then by Lemma \ref{22.07.12.23.26} and \eqref{22.07.12.23.47},  
$\boldsymbol{s}=(s,s_L,s_U)$ becomes a scaling triple with $R_0=1$ and $c_s=2$. 
If we put
\begin{align}
								\label{dependent y}
\Lambda_t(dy):=a(t,y)\mu(dy),\quad 0<C^{-1}\leq a(t,y)\leq C,\quad \forall (t,y)\in[0,\infty)\times\bR^d,
\end{align}
then there exists an additive $Z$ process with a bounded triplet $(0,0,\Lambda_t)_{t\geq0}$. 
Next we prove that Assumption \ref{22.06.26.18.46} holds based on Remark \ref{rem 2022071301}(i).
Let $\varphi \in  \cS(\bR^d)$ so that $\cF[\varphi]\in C_c^\infty(B_{c_s} \setminus B_{c_s^{-1}} )$.
Using H\"older's inequality and Plancherel's theorem,
we have
\begin{align*}
    &\int_{\bR^d}\left|\cF^{-1}\left(\exp\left(\int_0^t\Psi_Z(h,r^{-1}\cdot)dh\right)\cF[\varphi]\right)(x)\right|dx\\
    &\leq\left(\int_{\bR^d}\frac{1}{(1+|x|^2)^{2d_0}}dx\right)^{1/2}\left(\int_{\bR^d}(1+|x|^2)^{2d_0}\left|\cF^{-1}\left(\exp\left(\int_0^t\Psi_Z(h,r^{-1}\cdot)dh\right)\cF[\varphi]\right)(x)\right|^2dx\right)^{1/2}\\
    &=N(d)\left(\int_{\bR^d}\left|(1-\Delta)^{d_0}\left(\exp\left(\int_0^t\Psi_Z(h,r^{-1}\cdot)dh\right)\cF[\varphi]\right)(\xi)\right|^2d\xi\right)^{1/2}\\
    &\leq (1+t^{2d_0})\left(\int_{c_s^{-1}\leq|\xi|\leq c_s}\frac{N}{(1+r^{-2}|\xi|^2)^t}d\xi\right)^{1/2}\\
    &\leq \frac{N(d,c_s,T,C)}{(1+r^{-2})^{t/2}}=N_0e^{-N_1s(r)^{-1}t},
\end{align*}
where $d_0:=\lfloor d/4\rfloor+1$. 
Similarly, by using H\"older's inequality, Plancherel's theorem, \eqref{22.07.12.23.47} and \eqref{22.07.06.14.11},
\begin{align*}
    &\|\psi^{\mu}(r^{-1}D)\varphi\|_{L_1(\bR^d)}\\
    &\leq N(d)\left(\int_{\bR^d}\left|(1-\Delta)^{d_0}\left(\log(1+r^{-2}|\cdot|^2)\cF[\varphi]\right)(\xi)\right|^2d\xi\right)^{1/2}\\
    &\leq N\left(\int_{c_s^{-1}\leq|\xi|\leq c_s}|\log(1+r^{-2}|\xi|^2)|^2d\xi\right)^{1/2}\leq N\log(1+r^{-2}c_s^4)\\
    &\leq N(d,c_s)\log(1+r^{-2})=N_2s(r)^{-1}.
\end{align*}
Therefore Assumption \ref{22.06.26.18.47} holds with $\mu$ and $s$ given above. 
In particular, if we consider coefficients $a$ depending only on $t$, then we obtain the following corollary from our main theorem.
\end{example}
\begin{corollary}
							\label{cor 20220713 01}
For given $p\in(1,\infty)$, $q\in[1,\infty)$, $T\in(0,\infty)$, $\gamma\in[0,\infty)$, $\varphi\in\Phi_{2}(\bR^d)$ and $u_0\in\cS'(\bR^d)$ satisfying
$$
\|u_0\|_{B_{p,q}^{\log,\varphi,\gamma-2/q}}:=\|u_0\ast\varphi_0\|_{L_p(\bR^d)}+\left(\sum_{j=1}^{\infty}\log(1+2^{2j})^{q\gamma/2}\|u_0\ast\varphi_j\|_{L_p(\bR^d)}^q\right)^{1/q}<\infty,
$$
the initial value problem
\begin{equation*}
\begin{cases}
\frac{\p u}{\p t}(t,x)=-a(t)\log(1-\Delta)u(t,x),\quad &(t,x)\in(0,T)\times\bR^d,\\
u(0,x)=u_0(x),\quad & x\in\bR^d,
\end{cases}
\end{equation*}
has a unique solution $u$ in $L_q((0,T);H_p^{\log,\gamma}(\bR^d))$, where $a$ is measurable, bounded above and below function and
$$
\|u\|_{H_p^{\log,\gamma}(\bR^d)}:=\|(1+\log(1-\Delta))^{\gamma/2}u\|_{L_p(\bR^d)}=\left(\int_{\bR^d}\left|\cF^{-1}[(1+\log(1+|\cdot|^2))^{\gamma/2}\cF[u]](x)\right|^pdx\right)^{1/p}.
$$
Moreover, the solution $u$ satisfies
$$
\|u\|_{L_q((0,T);H_p^{\log,\gamma}(\bR^d))}\leq N\|u_0\|_{B_{p,q}^{\log,\varphi,\gamma-2/q}(\bR^d)},
$$
where $N$ is independent of $u$ and $u_0$.
\end{corollary}

For easy verification of Assumptions \ref{22.06.26.18.46} and \ref{22.06.26.18.46}, we present sufficient conditions which satisfy Assumptions \ref{22.06.26.18.46} and \ref{22.06.26.18.47} the following propositions. 
Based on these propositions, we show that our operators  includes those in \cite{mikulevivcius2017p,mikulevivcius2019cauchy,mikulevicius2020cauchy}.
All proofs of these propositions  are given later.
In other words, the proofs of Propositions \ref{22.07.07.16.05}, \ref{22.07.07.16.05-2}, and \ref{22.07.07.16.05-3} are given in Section \ref{22.07.07.14.36}

\begin{prop}
							\label{22.07.07.16.05}
Let $\boldsymbol{s}=(s,s_L,s_U)$ be a scaling triple with $R_0=\infty$ and $Z$ be an additive process with a bounded triplet $(a(t),0,\Lambda_t)_{t\geq0}$. 
We  assume that there exist L\'evy measure $\nu$ and a positive constant $N_\nu $ such that for any $t\geq0$ and nonnegative $\cB(\bR^d)$-measurable function $f$,
\begin{equation}
			\label{20.08.11.14.28}
s(r)\int_{\bR^d}f(y)\Lambda_t(r\,dy):=s(r)\int_{\bR^d}f\left(\frac{y}{r}\right)\Lambda_t(dy)\geq\int_{\bR^d}f(y)\nu(dy),\quad\forall r\in(0,\infty),
\end{equation}
(we simply put $s(r)\Lambda_t(r\,dy)\geq\nu(dy)$ to denote \eqref{20.08.11.14.28}) where $\nu(dy)$  satisfies
\begin{equation}
\label{22.07.07.16.35}
    \inf_{|\xi|=1}\int_{|y|\leq N_\nu }|y\cdot\xi|^2\nu(dy) >0.
\end{equation}
Then Assumption \ref{22.06.26.18.46} holds.
\end{prop}
\begin{rem}
\label{22.07.25.11.42}
If \eqref{20.08.11.14.28} holds, then for all $c,r\in(0,\infty)$,
$$
s(r/c)\Lambda_t(r\,dy)\geq\nu(c\,dy)
$$
and \eqref{22.07.07.16.35} is replaced by
$$
\inf_{|\xi|=1}\int_{|y|\leq cN_{\nu}}|y\cdot\xi|^2\nu(dy)>0.
$$
Moreover, for any fixed $c>0$, 
\begin{align*}
s(r/c) \approx s(r).
\end{align*}
Therefore $N_{\nu}$ is not crucial in Proposition \ref{22.07.07.16.05}.
\end{rem}
\begin{prop}					\label{22.07.07.16.05-2}
Let $s$ be a scaling function and $\mu$ be a symmetric L\'evy measure on $\bR^d$.
Assume that
$$
L(r;s,\mu):=s(r)\int_{\bR^d}(1\wedge|r^{-1}y|^2)\mu(dy)=s(r)\int_{\bR^d}(1\wedge|y|^2)\mu(r\,dy),
$$
is uniformly bounded on $(0,\infty)$, that is,
\begin{equation}
\label{22.07.09.21.28}
    \sup_{0<r<\infty}L(r;s,\mu)<\infty.
\end{equation}
Then Assumption \ref{22.06.26.18.47} holds.
\end{prop}

\begin{prop}
							\label{22.07.07.16.05-3}
Let $\boldsymbol{s}=(s,s_L,s_U)$ be a scaling triple with $R_0=\infty$ and $Z$ be an additive process with a bounded triplet $(a(t),0,\Lambda_t)_{t\geq0}$. 
We assume that there exist L\'evy measure $\nu$ and positive constants $N_\nu$ and $c$ such that 
for any $t\geq0$ and nonnegative $\cB(\bR^d)$-measurable function $f$,
\begin{equation}
							\label{eqn 20220715 01}
\int_{\bR^d}f\left(y\right)\Lambda_t(dy)\geq\int_{\bR^d}f(y)\nu(dy),
\end{equation}
\begin{equation}
							\label{eqn 20220715 02}
    \inf_{r \in (0, \infty ),|\xi|=c} s(r)\int_{|y|\leq N_\nu}(1-\cos(y \cdot \xi))\nu(r\, dy) >0,
\end{equation}
and,
\begin{align}
	\label{eqn 20220718 10}
\sup_{r \in (0,\infty)}\left[ s(r)\int_{\bR^d}(1\wedge|r^{-1}y|^2)\nu(dy) \right]
= \sup_{r \in (0,\infty)} \left[ s(r)\int_{\bR^d}(1\wedge|y|^2)\nu(r\,dy) \right] < \infty,
\end{align}
where the constant $c_s$ appears in Definition \ref{20.05.28.13.20}.
Then Assumption \ref{22.06.26.18.46} holds.
Additionally, Proposition \ref{22.07.07.16.05-2} holds with 
$$
\mu= \nu_{sym}=\frac{1}{2} \left(\nu(dy) + \nu(-dy)  \right).
$$
\end{prop}

We simply say that  an additive process $Z$ with a bounded triplet $(0,0,\Lambda_t)_{t\geq0}$ has a nice lower bound $\nu$ with respect to $s$ if assumptions of Proposition \ref{22.07.07.16.05} or Proposition \ref{22.07.07.16.05-3} hold.
Moreover, we simply say that $\mu$ satisfies a weak scaling property with respect to $s$ if Proposition \ref{22.07.07.16.05-2} holds.

\begin{rem}
\label{20.06.29.15.35}
We compare our assumptions in Propositions \ref{22.07.07.16.05} and \ref{22.07.07.16.05-2} to those introduced in \cite{mikulevivcius2017p,mikulevivcius2019cauchy}.
Recall that Assumptions $A_0(\sigma)$, \textbf{D}$(\kappa,l)$, and \textbf{B}$(\kappa,l)$ in \cite{mikulevivcius2017p,mikulevivcius2019cauchy}:
\begin{itemize}
\item (Assumption $A_0(\sigma)$) Let $\mu_0(dy)=1_{|y|\leq 1}\mu_0(dy)$ and
\begin{align}
							\label{2020081601}
    \int_{|y|\leq1}|y|^2\mu_0(dy)+\int_{\bR^d}|\xi|^4[1+\lambda(\xi)]^{d+3}\exp(\psi^{\tilde{\mu}_0}(\xi))d\xi\leq n_0<\infty,
\end{align}
where
 \begin{equation*}
    \begin{gathered}
    \lambda(\xi)=\int_{|y|\leq1}\chi_{\sigma}(y)|y|(|\xi||y|\wedge1)\nu(dy),\quad \chi_{\sigma}(y)=1_{\sigma=1}1_{|y|\leq1}(y)+1_{\sigma\in(1,2)},\\
    \psi^{\tilde{\mu}_0}(\xi)=\int_{\bR^d}(\cos(y\cdot\xi)-1)\tilde{\mu}_0(dy),\quad \tilde{\mu}_0(dy)=\frac{1}{2}(\mu_0(dy)+\mu_0(-dy)).
    \end{gathered}
\end{equation*}
In addition, for any $\xi\in\{\xi\in\bR^d:|\xi|=1\}$,
$$
\int_{|y|\leq1}|y\cdot\xi|^2\mu_0(dy)\geq c_1>0.
$$
\item (Assumption \textbf{D}$(\kappa,l)$) Let $\kappa$ be a scaling function with a scaling factor $l$ (see Remark \ref{20.08.13.11.21}). For every $R>0$,
$$
\tilde{\pi}_R(dy):=\kappa(R)\pi(R\,dy)\geq\mu_0(dy)
$$
with $\mu_0$ satisfying Assumption $A_0(\sigma)$. In addition,
$$
\int_{R<|y|\leq R'}y\mu_0(dy)=0,\quad \forall0<R\leq R'\leq1,
$$
if $\sigma=1$.
\item (Assumption \textbf{B}$(\kappa,l)$) Let $\kappa$ be a scaling function with a scaling factor $l$. There exist $\alpha_1$, $\alpha_2$, and a constant $N_0>0$ such that
$$
\int_{|y|\leq1}|y|^{\alpha_1}\tilde{\pi}_R(dy)+\int_{|y|>1}|y|^{\alpha_2}\tilde{\pi}_R(dy)\leq N_0,\quad \forall R>0,
$$
where $\alpha_1,\alpha_2\in(0,1]$ if $\sigma\in(0,1)$; $\alpha_1,\alpha_2\in(1,2]$ if $\sigma\in(1,2)$; $\alpha_1\in(1,2]$ and $\alpha_2\in[0,1)$ if $\sigma=1$.
\end{itemize}
It is obvious that Assumptions $A_0(\sigma)$ and $\textbf{D}(\kappa,l)$ imply the L\'evy process $X$ with triplet $(0,0,\pi)$ has a nice lower bound $\mu_0$ with respect to $\boldsymbol{\kappa}=(\kappa,\kappa_L,\kappa_U)$, where $\kappa_L$ and $\kappa_U$ are functions defined by \eqref{22.07.07.16.16}. However, the integrability condition on the symbol \eqref{2020081601} is not necessary in our assumptions. 
Moreover, elementary computations show that assumption $\textbf{B}(\kappa,l)$ (in \cite{mikulevivcius2017p,mikulevivcius2019cauchy}) implies the weak scaling property of $\pi$ with respect to $\boldsymbol{\kappa}$ since
$$
1\wedge |y|^2 \leq |y|^{\alpha_1}1_{|y|\leq1}+|y|^{\alpha_2}1_{|y|>1},\quad \forall y\in\bR^d
$$
for all $\alpha_1,\alpha_2\in[0,2]$.
\end{rem}

\begin{rem}
Next, we compare assumptions in Proposition \ref{22.07.07.16.05-3}  to those in \cite{mikulevicius2020cauchy}.
In other words, assumptions in Proposition \ref{22.07.07.16.05-3}  are weaker than those in \cite{mikulevicius2020cauchy}.
Recall the setting and assumptions in \cite{mikulevicius2020cauchy}.
Let $\nu$ be a L\'evy measure on $\bR^d$ and define
\begin{align*}
w(r) = \frac{1}{ \nu( \{ y \in \bR^d : |y| >r \})}.
\end{align*}
Assume that $w(r)$ is an O-RV function at both and infinity.
We skip stating all details about the indices of the O-RV function (for details, see \cite[Section 2.3]{mikulevicius2020cauchy}).
They also assumed 
\begin{align}
							\label{eqn 20220714 01}
\inf_{R \in (0,\infty), |\hat \xi|=1} \int_{|y| \leq 1} \left| \hat \xi \cdot y \right|^2 \tilde \nu_R(dy) > 0,
\end{align}
where $\tilde \nu_R(dy) = w(R) \nu(R\, dy)$.
Now we claim that assumptions in Proposition \ref{22.07.07.16.05-3}  hold by putting
$$
\Lambda_t = \nu \quad \text{and} \quad s(r) = w(r).
$$
First, we show \eqref{eqn 20220714 01}  implies \eqref{eqn 20220715 02}.
Obviously,
$$
1-\cos(y\cdot\xi) \geq 0 \quad \forall y~\text{and}~ \forall \xi
$$
and
\begin{equation*}
1-\cos(\lambda)\geq\frac{1}{4}\lambda^2 \quad \forall \lambda \in [-1,1].
\end{equation*}
Fix $c \in (0,\infty)$.
Then for all $r \in (0, \infty)$ and $\xi\in\bR^d$ such that $|\xi|=c$, we have 
\begin{align*}
 & s(r) \int_{\bR^d}(1-\cos(y\cdot\xi))\nu (r\, dy) \\
 & s(r) \frac{s(r c^{-1})}{s(r c^{-1})} \int_{\bR^d}(1-\cos(y\cdot\xi))\nu (r\, dy) \\
 &= \frac{s(r) }{s(r c^{-1})} s(rc^{-1}) \int_{\bR^d}\left(1-\cos\left(y\cdot \frac{\xi}{ c } \right) \right)\nu (r c^{-1}\, dy) \\
 &\geq  \frac{s(r) }{s(r c^{-1})} s(rc^{-1}) \int_{|y| \leq 1}\left(1-\cos\left(y\cdot \frac{\xi}{c} \right) \right)\nu (r c^{-1}\, dy) \\
 &\geq  \frac{s(r) }{s(r c^{-1})} s(rc^{-1}) \int_{|y| \leq 1} |y \cdot c^{-1} \xi|^2 \nu (r c^{-1}\, dy) \\
 &\geq \frac{s(r) }{s(r c^{-1})} N \geq N,
\end{align*}
where a property of the scaling function used in the the last inequality.
Next we prove 
\begin{align*}
\sup_{r \in (0,\infty)} s(r) \int_{\bR^d}(1\wedge|r^{-1}y|^2)\nu(dy) <\infty.
\end{align*}
Observe that for all $ r \in (0,\infty)$,
\begin{align*}
s(r)\int_{|y| > r }(1\wedge|r^{-1}y|^2)\nu(dy)
&=w(r)\int_{|y| > r } \nu(dy)  \\
&= w(r)\nu( \{ y \in \bR^d : |y| > r\} ) 
= 1
\end{align*}
and by \cite[Lemma 4]{mikulevicius2020cauchy},
\begin{align*}
s(r)\int_{|y|  \leq r }(1\wedge|r^{-1}y|^2)\nu(dy)
&=w(r)\int_{|y| \leq r }  |r^{-1}y|^2 \nu(dy) \\
&=\int_{|y| \leq 1 }  |y|^2 w(r) \nu(r\, dy) \leq N,
\end{align*}
where $N$ is independent of $r$. 
\end{rem}
\begin{example}
\label{20.07.05.16.22}
With the help of the characteristic exponent $\Psi_Z(r,\xi)$ of an additive process $Z$, one can understand the infinitesimal generators $\cA_Z(t)$ as the following time dependent pseudo-differential operators (see Lemma \ref{20.04.11.14.30}): 
$$
\cA_Z(r)u(t,x)= \cF^{-1} \left[\Psi_Z(r,\cdot)\cF\left[u(t,\cdot) \right] \right](x).
$$
We remark that the symbol $\Psi_Z(r,\xi)$ does not have to be smooth on the whole space.
Here is a more concrete example. 
Let $\alpha\in(0,2)$ and $c_j(t,y)$ $(j=1,2,\cdots,d)$ be a measurable function on $[0,\infty)\times\bR^d$ such that
$$
0<C^{-1}\leq c_j(t,y)\leq C, \quad \forall (t,y)\in[0,\infty)\times\bR^d,\quad j=1,2,\cdots,d.
$$
For each $t > 0$ and $j=1,2,\cdots,d$, define L\'evy measures
\begin{equation*}
\begin{gathered}
\Lambda_{j,t}(dy):=\frac{c_j(t,y)}{|y^j|^{1+\alpha}}dy^j\epsilon_0(dy^1,\cdots,dy^{j-1},dy^{j+1},\cdots,dy^d),\quad\Lambda_{t}(dy):=\sum_{j=1}^d\Lambda_{j,t}(dy),
\end{gathered}
\end{equation*}
where $\epsilon_0$ is the centered Dirac measure on $\bR^{d-1}$. One can observe that for $r>0$,
\begin{equation}
\label{20.07.05.15.25}
    \Lambda_{j,t}(r\,dy)=\frac{c_j(t,ry)}{r^{\alpha}|y^j|^{1+\alpha}}dy^j\epsilon_0(dy^1,\cdots,dy^{j-1},dy^{j+1},\cdots,dy^d).
\end{equation}
One can easily check that $\boldsymbol{s}=(s,s_L,s_U)$ defined by
$$
s(r)=s_L(r)=s_U(r):=r^{\alpha}
$$
is a scaling triple with $c_s=2$. Put
\begin{equation*}
    \begin{gathered}
    \nu_j(dy):=1_{R_d}(y)\frac{1}{C|y^j|^{1+\alpha}}dy^j\epsilon_0(dy^1,\cdots,dy^{j-1},dy^{j+1},\cdots,dy^d),\quad \nu(dy):=\sum_{j=1}^d\nu_j(dy)\\
    \mu(dy):=\frac{1}{|y|^{d+\alpha}}dy,
    \end{gathered}
\end{equation*}
where
$$
R_d:=(-r_0,r_0)\times\cdots(-r_0,r_0)\subseteq \{y\in\bR^d:|y|<4^{-1}\}.
$$
By \eqref{20.07.05.15.25},
$$
s(r)\Lambda_{j,t}(r\,dy)\geq\nu_j(dy).
$$
Furthermore, for $\xi\in\{\xi\in\bR^d:|\xi|=1\}$,
$$
\int_{\bR^d}|y\cdot\xi|^2\nu(dy)=\sum_{j=1}^d\frac{2|\xi^j|^2}{C}\int_{0}^{r_0}x^2x^{-1-\alpha}dx=\frac{2|\xi|^2}{C(2-\alpha)}r_0^{2-\alpha}=\frac{2r_0^{2-\alpha}}{C(2-\alpha)}>0.
$$
Therefore, $s(r)\Lambda_t(r\,dy)\geq\nu(dy)$ and this implies that the additive process $Z$ with a bounded triplet $(0,0,\Lambda_t)_{t\geq0}$ has a lower bound $\nu$ with respect to $\boldsymbol{s}$.

Next we check the weak scaling property of $\mu$. By the change of variable formula, for $r>0$,
\begin{equation*}
    \begin{aligned}
    N(d)\int_{\bR^d}(1\wedge|r^{-1}y|^2)\mu(dy)&=r^{-2}\int_0^r x^{1-\alpha}dx+\int_{r}^{\infty}\frac{1}{x^{1+\alpha}}dx\\
    &=\frac{r^{-\alpha}}{2-\alpha}+\frac{r^{-\alpha}}{\alpha}.
    \end{aligned}
\end{equation*}
Therefore, $\mu$ satisfies a weak scaling property with respect to $\boldsymbol{s}$. This example leads to the following corollary.
\end{example}
Before stating the corollary, recall the fractional Laplacian operator with respect to the $j$-th component.
For $j=1,\ldots,d$, define
$$
(-\Delta)^{\alpha/2}_{x^j}\varphi(x):=\frac{1}{(2\pi)^{d/2}}\int_{\bR^d}|\xi^j|^{\alpha}\cF[\varphi](\xi)e^{ix\cdot\xi}d\xi.
$$
Note that $|\xi^j|^{\alpha}$ is not smooth on the line $\{\xi \in \bR^d : \xi^j=0\}$.
\begin{corollary}
			\label{20.07.08.15.05}
Let $\alpha\in(0,2)$ and $c_j(t)$ $(j=1,\ldots, d)$ be a measurable function on $[0,\infty)$ and assume that there exists a positive constant $C\geq1$ such that 
$$
0<C^{-1}\leq c_j(t)\leq C,\quad \forall t\in[0,\infty).
$$
Then for all $p\in(1,\infty)$,  $q\in[1,\infty)$, $T\in(0,\infty)$, $\gamma\in\bR$, and $u_0\in B_{p,q}^{\gamma-\frac{\alpha}{q}}(\bR^d)$, the initial value problem
\begin{equation*}
\begin{cases}
\frac{\p u}{\p t}(t,x)=-\sum_{j=1}^dc_j(t)(-\Delta)^{\alpha/2}_{x^j}u(t,x),\quad &(t,x)\in(0,T)\times\bR^d,\\
u(0,x)=u_0(x),\quad & x\in\bR^d,
\end{cases}
\end{equation*}
has a unique solution $u\in L_q((0,T);H_p^{\gamma}(\bR^d))$. Moreover if $q\in(1,\infty)$, then the solution $u$ satisfies
\begin{equation*}
    \|u\|_{L_q((0,T);H_p^{\gamma}(\bR^d))}\leq N\|u_0\|_{B_{p,q}^{\gamma-\frac{\alpha}{q}}(\bR^d)},
\end{equation*}
where $N$ is independent of $u$ and $u_0$. 
\end{corollary}
\begin{proof}
It is an easy consequence of Proposition \ref{22.07.07.16.05}, Proposition \ref{22.07.07.16.05-2}, and Example \ref{20.07.05.16.22} if we consider the case $\gamma\in[0,\infty)$. For the case $\gamma\in(-\infty,0)$ using the facts that $(1-\Delta)^{-\gamma_0/2}$ is a bijective quasi-isometry from $B_{p,q}^{\gamma-\frac{\alpha}{q}}(\bR^d)$ to $B_{p,q}^{\gamma+\gamma_0-\frac{\alpha}{q}}(\bR^d)$ (cf. (\cite[Theorem 2.2]{sawano2018theory})), we also obtain the results. The corollary is proved.
\end{proof}
\begin{rem}
Thanks to the associate editor and referees, we figured out that Corollary \ref{20.07.08.15.05} could be obtained from the results in \cite{ mikulevivcius2017p,mikulevivcius2019cauchy} if all coefficients $c_j$ are independent of time $t$.
\end{rem}

\mysection{The Cauchy problems with smooth functions}
\label{20.06.20.22.06}
For a L\'evy process $X$, then the generator $\cA_X$ does not map $\cS(\bR^d)$ into  $\cS(\bR^d)$ itself (see \cite[Remark 2.1.10]{farkas2001function} ). It naturally raises necessities of smooth function spaces larger than $\cS(\bR^d)$ which control the range of $\cA_X$ on $\cS(\bR^d)$.
It is enough to consider a set of functions whose derivatives  are contained in $L_p$-spaces and it is provided in \cite{mikulevivcius2017p, mikulevivcius2019cauchy} previously. We consider a subspace of this space to avoid a trace issue with respect to the time variable.
Here are the definitions of the function space which play important roles in approximations of smooth functions which fit to infinitesimal generators of additive processes.
\begin{defn}
\label{20.05.17.14.08}
Let $T\in(0,\infty)$ and $p\in[1,\infty)$. 
 \begin{enumerate}[(i)]
 \item $C_p^{\infty}(\bR^d)$ denotes the set of all infinitely differentiable functions $f$ on $\bR^d$ such that for any multi-index $\alpha$, their derivatives
     $$
     D^{\alpha}_xf\in L_p(\bR^d).
     $$
     Define
     $$
     \tilde{C}^{\infty}(\bR^d):=\bigcap_{p\in[1.\infty)}C_p^{\infty}(\bR^d).
     $$
    \item  $C_p^{\infty}([0,T]\times\bR^d)$ denotes the set of all $\cB([0,T]\times\bR^d)$-measurable functions $f$ on $[0,T]\times\bR^d$ such that for any multi-index $\alpha$ with respect to the space variable, their derivatives
\begin{equation*}
D^{\alpha}_{x}f\in L_{\infty}([0,T];L_p(\bR^d)).
\end{equation*}
Define
$$
\tilde{C}^{\infty}([0,T]\times\bR^d):=\bigcap_{p\in[1,\infty)}C_p^{\infty}([0,T]\times\bR^d).
$$
\item $C_p^{1,\infty}([0,T]\times\bR^d)$ denotes the set of all $f\in C_p^{\infty}([0,T]\times\bR^d)$ such that for any multi-index $\alpha$, their derivatives
\begin{equation*}
D^{\alpha}_{x}f\in C([0,T]\times \bR^d),
\end{equation*}
and
$$
\frac{\p f}{\p t}\in C_p^{\infty}([0,T]\times\bR^d).
$$
Define
$$
\tilde{C}^{1,\infty}([0,T]\times\bR^d):=\bigcap_{p\in[1,\infty)}C_p^{1,\infty}([0,T]\times\bR^d).
$$
\end{enumerate}
\end{defn}

Due to Sobolev's embedding theorem, for all $f \in C_p^\infty(\fR^d)$  and multi-index $\alpha$,
$$
D_x^\alpha f \in C(\bR^d).
$$
Here are specific examples contained in $\tilde{C}^{\infty}([0,T]\times\bR^d)$ and $\tilde{C}^{1,\infty}([0,T]\times\bR^d)$.
\begin{example}
\label{20.04.22.17.12}
\begin{enumerate} [(i)]
    \item Obviously, $C_c^{\infty}((0,T)\times\bR^d)\subsetneq \tilde{C}^{\infty}([0,T]\times\bR^d)$.
    \item Let $f$ be a infinitely differentiable function on $[0,T]\times\bR^d$ such that for any multi-indices $\alpha$ and $\beta$,
\begin{equation}
\label{20.04.22.15.18}
    \sup_{t\in[0,T]}\sup_{x\in\bR^d}|x^{\beta}D^{\alpha}_{t,x}f(t,x)|<\infty.
\end{equation}
Then $f\in\tilde{C}^{\infty}([0,T]\times\bR^d)$.
    \item Let 
    $$f(t,x):=\frac{t}{(1+|x|^2)^d}.$$
    Then $f$ is not in $C_c^{\infty}((0,T)\times\bR^d)$ and does not satisfy \eqref{20.04.22.15.18}. However, it is easy to check that $f\in\tilde{C}^{\infty}([0,T]\times\bR^d)$.
    \item For $u_0\in \tilde{C}^{\infty}(\bR^d)$ and $f\in\tilde{C}^{\infty}([0,T]\times\bR^d)$, put
    $$
    g(t,x):=u_0(x)+\int_{0}^tf(s,x)ds.
    $$
Then it is obvious  that $D^{\alpha}_xg\in C([0,T]\times\bR^d)$ for all multi-index $\alpha$ with respect to the space variable.  Additionally, by the fundamental theorem of calculus,
    $$
    \frac{\p g}{\p t}=f\in\tilde{C}^{\infty}([0,T]\times\bR^d).
    $$
    Therefore, $g\in\tilde{C}^{1,\infty}([0,T]\times\bR^d)$.
\end{enumerate}
\end{example}

Example \ref{20.04.22.17.12}(iii) shows that $\tilde{C}^{\infty}([0,T]\times\bR^d)$ contains some smooth functions with polynomial decays.
Thus one cannot expect that the Fourier transform operator  maps $\tilde{C}^{\infty}([0,T]\times\bR^d)$ into some smooth function spaces. 
However, fortunately, the  Fourier inversion theorem still works on the set of all Fourier transforms of $\tilde{C}^{\infty}([0,T]\times\bR^d)$ with respect to the space variable.
\begin{lem}
\label{20.03.30.13.32}
Let $T\in(0,\infty)$ and $f\in\tilde{C}^{\infty}([0,T]\times\bR^d)$. 
Then for almost every $0\leq t \leq T$,
$$
\cF[f(t,\cdot)]\in L_1(\bR^d)\cap C(\bR^d).
$$
In particular, the Fourier inversion theorem holds, i.e. for almost every $0\leq t \leq T$,
\begin{equation*}
    \cF^{-1}[\cF[f(t,\cdot)]](x)=f(t,x),\quad \forall x \in \fR^d.
\end{equation*}
\end{lem}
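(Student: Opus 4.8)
The plan is to fix a representative $t$ outside a suitable Lebesgue-null subset of $[0,T]$ and work entirely with the single function $g:=f(t,\cdot)$ and finitely many of its spatial derivatives; the integrability of $\cF[g]$ and the pointwise inversion identity will then both follow from standard $L_1$-Fourier analysis. The mild subtlety is organizing the exceptional $t$-set correctly.

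First I would fix an integer $k$ with $k>d/2$. Since $f\in\tilde{C}^{\infty}([0,T]\times\bR^d)\subseteq C_1^\infty([0,T]\times\bR^d)$, for every multi-index $\alpha$ we have $D_x^\alpha f\in L_\infty([0,T];L_1(\bR^d))$, so there is a null set $E_\alpha\subseteq[0,T]$ outside of which $\|D_x^\alpha f(t,\cdot)\|_{L_1(\bR^d)}<\infty$. Set $E:=\bigcup_{|\alpha|\le 2k}E_\alpha$, which is still null, and fix $t\in[0,T]\setminus E$. Then $g=f(t,\cdot)$ is $C^\infty$ on $\bR^d$ with $D_x^\beta g\in L_1(\bR^d)$ for all $|\beta|\le 2k$; in particular $g\in L_1(\bR^d)\cap C(\bR^d)$ and, since $(1-\Delta)^k$ is a constant-coefficient differential operator of order $2k$, also $(1-\Delta)^k g\in L_1(\bR^d)$.

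Next, because $g\in L_1(\bR^d)$, $\cF[g]$ is bounded and continuous. Integrating by parts (legitimate since $g$ and all its derivatives up to order $2k$ are integrable, so every boundary term vanishes) gives the identity $\cF[(1-\Delta)^k g](\xi)=(1+|\xi|^2)^k\cF[g](\xi)$; as $(1-\Delta)^k g\in L_1(\bR^d)$, the left-hand side is bounded, hence $|\cF[g](\xi)|\le N(1+|\xi|^2)^{-k}$ with $N=\|(1-\Delta)^k g\|_{L_1(\bR^d)}$. Since $k>d/2$ the majorant is integrable, so $\cF[f(t,\cdot)]=\cF[g]\in L_1(\bR^d)\cap C(\bR^d)$, which is the first assertion.

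Finally, with $g\in L_1(\bR^d)$ and $\cF[g]\in L_1(\bR^d)$, the classical $L_1$-Fourier inversion theorem yields $\cF^{-1}[\cF[g]]=g$ almost everywhere on $\bR^d$; but $\cF^{-1}[\cF[g]]$ is continuous (it is the inverse transform of an $L_1$ function) and $g=f(t,\cdot)$ is continuous by the previous step, so two continuous functions agreeing a.e. agree everywhere, giving $\cF^{-1}[\cF[f(t,\cdot)]](x)=f(t,x)$ for all $x\in\bR^d$ and all $t\in[0,T]\setminus E$. The only point that needs care is this bookkeeping of the exceptional $t$-set: one must invoke the $L_1$-in-$x$ bound separately for each of the finitely many multi-indices $|\alpha|\le 2k$ and discard the (still null) union of the resulting sets \emph{before} fixing $t$; everything else is a routine application of standard facts.
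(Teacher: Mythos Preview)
Your proof is correct and follows the same core idea as the paper: exploit that $(1-\Delta)^k f(t,\cdot)$ lies in an $L_p$-space to obtain enough decay of $\cF[f(t,\cdot)]$ for integrability. The only difference is in execution. The paper works in $L_2$: it takes $d_0=\lfloor d/4\rfloor+1$, observes $(1-\Delta)^{d_0}f(t,\cdot)\in L_2(\bR^d)$, applies Plancherel to get $(1+|\xi|^2)^{d_0}\cF[f(t,\cdot)]\in L_2(\bR^d)$, and then uses H\"older against $(1+|\xi|^2)^{-d_0}\in L_2(\bR^d)$ to conclude $\cF[f(t,\cdot)]\in L_1(\bR^d)$. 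You instead work in $L_1$: with $k>d/2$, you use $(1-\Delta)^k g\in L_1(\bR^d)$ and the Fourier identity $\cF[(1-\Delta)^k g]=(1+|\xi|^2)^k\cF[g]$ to obtain a pointwise bound $|\cF[g](\xi)|\le N(1+|\xi|^2)^{-k}$. Both are standard; the Plancherel route has the slight advantage that the Fourier identity is immediate without any integration-by-parts justification, whereas your route avoids Plancherel and H\"older entirely but requires knowing that the distributional identity $\cF[D^\alpha g]=(i\xi)^\alpha\cF[g]$ holds for $g\in W^{2k,1}(\bR^d)$ (which it does, by density of $C_c^\infty$, though your ``boundary terms vanish'' phrasing is a bit informal).
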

\begin{proof}
Since $f(t,\cdot)\in L_1(\bR^d)$,
$$\cF[f(t,\cdot)]\in C(\bR^d).$$
Let $d_0=\lfloor d/4\rfloor+1$. By the definition of $\tilde{C}^{\infty}([0,T]\times\bR^d)$,
$$(1-\Delta)^{d_0}f(t,\cdot)\in L_2(\bR^d).$$
By Plancherel's theorem,
$$(1+|\cdot|^2)^{d_0}\cF[f(t,\cdot)]\in L_2(\bR^d).$$
Therefore, by virtue of H\"older's inequality,
$$
\int_{\bR^d}|\cF[f](t,\xi)|d\xi\leq\left(\int_{\bR^d}\frac{1}{(1+|\xi|^2)^{2d_0}}d\xi\right)^{1/2}\left(\int_{\bR^d}(1+|\xi|^2)^{2d_0}|\cF[f](t,\xi)|^2d\xi\right)^{1/2}<\infty.
$$
The lemma is proved.
\end{proof}
Next we show that an infinitesimal generator of an additive process $Z$ can be represented as a pseudo-differential operator or a nonlocal operator.
\begin{lem}
\label{20.04.11.14.30}
Let $Z$ be an additive process with a bounded triplet $(a(t),0,\Lambda_t)_{t\geq0}$. Then $\cA_Z(t)$ is a linear operator on $\tilde{C}^{\infty}([0,T]\times\bR^d)$, i.e. $\cA_Z(t)f \in \tilde{C}^{\infty}([0,T]\times\bR^d)$ for all  $f \in \tilde{C}^{\infty}([0,T]\times\bR^d)$. 
Moreover, for almost all $t$, 
\begin{equation*}
\begin{aligned}
\cA_Z(t)f(t,x)&:=\lim_{h\downarrow0}\frac{\bE[f(t,x+Z_{t+h}-Z_t)-f(t,x)]}{h}=\cF^{-1}[\Psi_Z(t,\cdot)\cF[f(t,\cdot)]](x)\\
    &=a(t)\cdot\nabla_x f(t,x)+\int_{\bR^d}(f(t,x+y)-f(t,x)-y\cdot \nabla_x f(t,x)1_{|y|\leq1})\Lambda_t(dy) 
\end{aligned}
\end{equation*}
for all $x \in \bR^d$, where
$$
\Psi_Z(t,\xi):=ia(t)\cdot\xi+\int_{\bR^d}(e^{iy\cdot\xi}-1-iy\cdot\xi1_{|y|\leq1})\Lambda_t(dy).
$$
\end{lem}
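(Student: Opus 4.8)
The proof proceeds in three stages. First I would show that $\cA_Z(t)$ is given by the two claimed explicit formulas, working pointwise in $x$ for a fixed (a.e.) time $t$; then I would use those formulas to see that $\cA_Z(t)$ preserves $\tilde{C}^{\infty}([0,T]\times\bR^d)$. For the first stage, fix $t$ such that Lemma \ref{20.03.30.13.32} applies, so that $\cF[f(t,\cdot)]\in L_1(\bR^d)\cap C(\bR^d)$ and the Fourier inversion formula holds. Using the independent-increments structure in Definition \ref{20.05.01.15.17}(v), the characteristic function of $Z_{t+h}-Z_t$ is $\exp\bigl(\int_t^{t+h}\Psi_Z(r,\cdot)\,dr\bigr)$, so by Fubini (justified by $\cF[f(t,\cdot)]\in L_1$ and boundedness of $|e^{i\xi\cdot\cdot}|$)
\[
\bE\bigl[f(t,x+Z_{t+h}-Z_t)\bigr]
=\frac{1}{(2\pi)^{d/2}}\int_{\bR^d}e^{ix\cdot\xi}\exp\!\Bigl(\int_t^{t+h}\Psi_Z(r,\xi)\,dr\Bigr)\cF[f(t,\cdot)](\xi)\,d\xi .
\]
Subtracting $f(t,x)=\cF^{-1}[\cF[f(t,\cdot)]](x)$, dividing by $h$, and letting $h\downarrow 0$, the inner integrand converges pointwise in $\xi$ to $\Psi_Z(t,\xi)\cF[f(t,\cdot)](\xi)$ at every Lebesgue point of $r\mapsto\Psi_Z(r,\xi)$, hence for a.e. $t$ and every $\xi$. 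The main obstacle is the dominated-convergence bound needed to pass the limit inside the $\xi$-integral: one must control $\bigl|h^{-1}(\exp(\int_t^{t+h}\Psi_Z(r,\xi)\,dr)-1)\bigr|$ uniformly in small $h$ by an $L_1(d\xi)$ function against $\cF[f(t,\cdot)]$. Since $\operatorname{Re}\Psi_Z\le 0$ one has $|e^{w}-1|\le |w|$ for $\operatorname{Re}w\le 0$, giving the bound $h^{-1}\int_t^{t+h}|\Psi_Z(r,\xi)|\,dr$; and the local boundedness of the triplet in Definition \ref{20.05.01.15.17}(vi)--(vii) yields $|\Psi_Z(r,\xi)|\le C_T(1+|\xi|^2)$ uniformly for $r\in[0,T]$, which combined with $(1+|\cdot|^2)\cF[f(t,\cdot)]\in L_1(\bR^d)$ (again from Lemma \ref{20.03.30.13.32}, taking $d_0$ one larger) closes the estimate. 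This gives $\cA_Z(t)f(t,x)=\cF^{-1}[\Psi_Z(t,\cdot)\cF[f(t,\cdot)]](x)$ for all $x$.

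Second, I would identify this Fourier-multiplier expression with the drift-plus-nonlocal form. Writing $\Psi_Z(t,\xi)=ia(t)\cdot\xi+\int_{\bR^d}(e^{iy\cdot\xi}-1-iy\cdot\xi 1_{|y|\le1})\Lambda_t(dy)$, the term $ia(t)\cdot\xi$ corresponds under $\cF^{-1}$ to $a(t)\cdot\nabla f(t,x)$ since $f(t,\cdot)\in\tilde{C}^\infty(\bR^d)$. For the nonlocal part, apply Fubini once more to interchange the $d\xi$ and $\Lambda_t(dy)$ integrations; this is legitimate because $|e^{iy\cdot\xi}-1-iy\cdot\xi 1_{|y|\le1}|\le (2\wedge|y\cdot\xi|)\,1_{|y|>1}+\tfrac12|y\cdot\xi|^2 1_{|y|\le1}\le N(1\wedge|y|^2)(1+|\xi|^2)$, so the double integral is dominated by $L(t;\Lambda)\int(1+|\xi|^2)|\cF[f(t,\cdot)](\xi)|\,d\xi<\infty$. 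Carrying out the $\xi$-integration first inside, each of $\cF^{-1}[e^{iy\cdot\xi}\cF[f(t,\cdot)]](x)$, $\cF^{-1}[\cF[f(t,\cdot)]](x)$, and $\cF^{-1}[iy\cdot\xi\,\cF[f(t,\cdot)]](x)$ equals $f(t,x+y)$, $f(t,x)$, and $y\cdot\nabla f(t,x)$ respectively, yielding the stated integro-differential formula.

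Finally, for the stability statement $\cA_Z(t)f\in\tilde{C}^\infty([0,T]\times\bR^d)$, I would bound each summand in $L_p(\bR^d)$ uniformly in $t\in[0,T]$ and likewise its spatial derivatives (which just replace $f$ by $D^\alpha_x f$, also in $\tilde{C}^\infty$). The drift term $a(t)\cdot\nabla f$ is immediate from Definition \ref{20.05.01.15.17}(vi). For the nonlocal term, split $\Lambda_t$ over $\{|y|\le1\}$ and $\{|y|>1\}$: on $\{|y|>1\}$ use $\|f(t,\cdot+y)-f(t,\cdot)\|_{L_p}\le 2\|f(t,\cdot)\|_{L_p}$ and $\Lambda_t(\{|y|>1\})\le L(t;\Lambda)$; on $\{|y|\le1\}$ use Taylor's theorem, $\|f(t,\cdot+y)-f(t,\cdot)-y\cdot\nabla f(t,\cdot)\|_{L_p}\le N|y|^2\sup_{|z|\le1}\|D^2 f(t,\cdot+z)\|_{L_p}\le N|y|^2\sum_{|\beta|=2}\|D^\beta f(t,\cdot)\|_{L_p}$, together with $\int_{|y|\le1}|y|^2\Lambda_t(dy)\le L(t;\Lambda)$. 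Taking $L_\infty$ in $t$ and using boundedness of $\|L(\cdot;\Lambda)\|_{L_\infty([0,T])}$ gives $\cA_Z(t)f\in L_\infty([0,T];L_p(\bR^d))$ for every $p\in[1,\infty)$, and applying the same bounds to $D^\alpha_x f$ shows all spatial derivatives of $\cA_Z(t)f$ lie in $L_\infty([0,T];L_p(\bR^d))$, i.e. $\cA_Z(t)f\in\tilde{C}^\infty([0,T]\times\bR^d)$. (Here I use that $\tilde{C}^\infty$ is closed under spatial differentiation and that $D^\alpha_x$ commutes with $\cA_Z(t)$, which is clear from the multiplier representation.) Linearity in $f$ is obvious from either formula.
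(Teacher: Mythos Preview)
Your proposal is correct and follows essentially the same approach as the paper's proof: both use the Fourier inversion from Lemma \ref{20.03.30.13.32} together with the characteristic function identity to obtain the pseudo-differential representation, then pass to the integro-differential form, and finally use Taylor's theorem plus Minkowski's inequality to get the $L_p$-bound $\|I^{\Lambda}f(t,\cdot)\|_{L_p}\le N\,L(t;\Lambda)\sum_{|\alpha|\le 2}\|D^\alpha_x f(t,\cdot)\|_{L_p}$ showing membership in $\tilde{C}^{\infty}([0,T]\times\bR^d)$. Your write-up simply makes explicit some details the paper leaves terse (the dominating function for DCT via $|e^w-1|\le|w|$ when $\operatorname{Re}w\le 0$, and the Fubini justification for swapping $d\xi$ and $\Lambda_t(dy)$); one small slip is the reference to ``Definition \ref{20.05.01.15.17}(vii)'', which does not exist---the boundedness you need is item (vi).
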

\begin{proof}
Let $f\in\tilde{C}^{\infty}([0,T]\times\bR^d)$. 
Then, applying Lemma \ref{20.03.30.13.32}, Fubini's theorem, the Lebesgue dominated convergence theorem, and elementary properties of the Fourier transform, we have
\begin{equation*}
\begin{aligned}
\cA_Z(t)f(t,x)&:=\lim_{h\downarrow0}\frac{\bE[f(t,x+Z_{t+h}-Z_t)-f(t,x)]}{h}\\
    &=\lim_{h\downarrow0}\frac{\cF^{-1}[(e^{\int_{t}^{t+h}\Psi_Z(r,\cdot)dr}-1)\cF[f](t,\cdot)](x)}{h}\\
	&=\cF^{-1}[\Psi_Z(t,\cdot)\cF[f](t,\cdot)](x)\\
	&=a(t)\cdot\nabla_x f(t,x)+\int_{\bR^d}(f(t,x+y)-f(t,x)-y\cdot \nabla_x f(t,x)1_{|y|\leq1})\Lambda_t(dy).
\end{aligned}
\end{equation*}
Since $a\cdot\nabla_x f\in \tilde{C}^{\infty}([0,T]\times\bR^d)$, it suffices to show that
$$
I^{\Lambda}f(t,x):=\int_{\bR^d}(f(t,x+y)-f(t,x)-y\cdot \nabla_x f(t,x)1_{|y|\leq1})\Lambda_t(dy)
$$
is in $\tilde{C}^{\infty}([0,T]\times\bR^d)$. By Taylor's theorem and the Lebesgue dominated convergence theorem, 
\begin{equation}
\label{20.04.10.16.44}
D^{\alpha}_{x}I^{\Lambda}f(t,x)=I^{\Lambda}D^{\alpha}_{x}f(t,x),
\end{equation}
for any multi-index $\alpha$ with respect to the space variable. Due to Minkowski's inequality and Taylor's theorem,
\begin{equation}
\label{20.04.10.16.45}
\begin{gathered}
\|I^{\Lambda}f(t,\cdot)\|_{L_p(\bR^d)}\leq 4 L(t;\Lambda)\sum_{|\alpha|\leq2}\|D^{\alpha}_xf(t,\cdot)\|_{L_p(\bR^d)},\quad \forall p\in[1,\infty]
\end{gathered}
\end{equation}
where
$$
L(t;\Lambda)=\int_{\bR^d}(1\wedge|y|^2)\Lambda_t(dy).
$$
Therefore, combining two relations \eqref{20.04.10.16.44} and \eqref{20.04.10.16.45}, we have $\cA_Z(t)f \in \tilde{C}^{\infty}([0,T]\times\bR^d)$.
The lemma is proved.
\end{proof}
Next, we show that if data $f$ and $u_0$ are smooth, then the following Cauchy problem
\begin{equation}
				\label{20.06.06.16.02}
\begin{cases}
\frac{\p u}{\p t}(t,x)=\cA_Z(t)u(t,x)+f(t,x),\quad &(t,x)\in(0,T)\times\bR^d,\\
u(0,x)=u_0(x),\quad & x\in\bR^d
\end{cases}
\end{equation}
has a unique strong solution. 
Note that the existence and uniqueness of a solution to \eqref{20.06.06.16.02} is guaranteed without our main assumptions unless we consider a regularity improvement on a solution. 
\begin{thm}
					\label{20.05.04.10.58}
Let $Z$ be an additive process with bounded triplet $(a(t),0,\Lambda_t)_{t\geq0}$, $T\in(0,\infty)$, $u_0\in\tilde C^\infty(\bR^d)$, and $f\in\tilde{C}^{\infty}([0,T]\times\bR^d)$. 
Then there exists a unique solution $u\in\tilde{C}^{1,\infty}([0,T]\times\bR^d)$  to the Cauchy problem \eqref{20.06.06.16.02}.
Moreover, the solution $u$ has the following probabilistic representation :
$$
u(t,x)=\bE[u_0(x+Z_t)] + \int_{0}^t\bE[f(s,x+Z_t-Z_s)]ds,\quad  \forall (t,x)\in [0,T]\times\bR^d.
$$
\end{thm}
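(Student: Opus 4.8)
The plan is to write the solution down explicitly by Duhamel's principle in probabilistic form, recast it as a Fourier multiplier operator via the characteristic exponent $\Psi_Z$, verify its regularity and that it solves \eqref{20.06.06.16.02}, and finally obtain uniqueness by Fourier transform in $x$, which turns the homogeneous problem into a family of scalar linear ODEs in $t$.

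\textbf{Construction and representation.} I would set
$$
u(t,x):=\bE[u_0(x+Z_t)]+\int_0^t\bE[f(s,x+Z_t-Z_s)]\,ds .
$$
Since $u_0\in\tilde C^\infty(\bR^d)$ and $f(s,\cdot)\in\tilde C^\infty(\bR^d)$, Lemma \ref{20.03.30.13.32} (applied to functions constant in $t$) gives $\cF[u_0],\cF[f(s,\cdot)]\in L_1(\bR^d)$. Using $\bE[e^{i\xi\cdot(Z_t-Z_s)}]=e^{\int_s^t\Psi_Z(r,\xi)\,dr}$, Fubini's theorem, and the bound $\mathrm{Re}\,\Psi_Z(r,\xi)=\int_{\bR^d}(\cos(y\cdot\xi)-1)\Lambda_r(dy)\le0$ (so that $|e^{\int_s^t\Psi_Z(r,\xi)\,dr}|\le1$), one rewrites each term as a Fourier multiplier,
$$
\bE[u_0(\cdot+Z_t)](x)=\cF^{-1}\Big[e^{\int_0^t\Psi_Z(r,\cdot)\,dr}\cF[u_0]\Big](x),\qquad \bE[f(s,\cdot+Z_t-Z_s)](x)=\cF^{-1}\Big[e^{\int_s^t\Psi_Z(r,\cdot)\,dr}\cF[f(s,\cdot)]\Big](x).
$$

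\textbf{Regularity.} Since the law of $Z_t-Z_s$ is a probability measure and $D^\alpha_x$ passes through the expectation, $D^\alpha_x u(t,\cdot)=\bE[D^\alpha u_0(\cdot+Z_t)]+\int_0^t\bE[D^\alpha_x f(s,\cdot+Z_t-Z_s)]\,ds$, so Minkowski's integral inequality yields
$$
\sup_{t\le T}\|D^\alpha_x u(t,\cdot)\|_{L_p(\bR^d)}\le\|D^\alpha u_0\|_{L_p(\bR^d)}+T\sup_{s\le T}\|D^\alpha_x f(s,\cdot)\|_{L_p(\bR^d)}<\infty
$$
for every $p\in[1,\infty)$, whence $u\in\tilde C^\infty([0,T]\times\bR^d)$; joint continuity of each $D^\alpha_x u$ follows from continuity in law of $t\mapsto Z_t$ and $s\mapsto Z_t-Z_s$, Sobolev embedding, and dominated convergence. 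Granting the equation below, $\partial_t u=\cA_Z(t)u+f\in\tilde C^\infty([0,T]\times\bR^d)$ by Lemma \ref{20.04.11.14.30}, so $u\in\tilde C^{1,\infty}([0,T]\times\bR^d)$.

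\textbf{The equation and uniqueness.} For fixed $\xi$ the map $t\mapsto e^{\int_0^t\Psi_Z(r,\xi)\,dr}$ is absolutely continuous with a.e.\ derivative $\Psi_Z(t,\xi)e^{\int_0^t\Psi_Z(r,\xi)\,dr}$; together with $|\Psi_Z(t,\xi)|\le N(1+|\xi|^2)$ (from the bounded triplet) and the rapid decay of $\cF[u_0]$ (Lemma \ref{20.03.30.13.32}), differentiation under the Fourier integral is legitimate and, by Lemma \ref{20.04.11.14.30}, gives $\partial_t\bE[u_0(\cdot+Z_t)](x)=\cA_Z(t)\big(\bE[u_0(\cdot+Z_t)]\big)(x)$ for a.e.\ $t$. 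Differentiating the Duhamel term produces the boundary term $\bE[f(t,\cdot+Z_t-Z_t)](x)=f(t,x)$ plus $\int_0^t\partial_t\bE[f(s,\cdot+Z_t-Z_s)](x)\,ds$; exchanging $\cA_Z(t)$ with the $ds$-integral (Fubini, using the bound \eqref{20.04.10.16.45}) identifies the latter with $\cA_Z(t)$ applied to the Duhamel term, so $\partial_t u=\cA_Z(t)u+f$ a.e.\ $t$, and the Duhamel term vanishing at $t=0$ together with $Z_0=0$ gives $u(0,\cdot)=u_0$. For uniqueness, if $u_1,u_2$ both solve \eqref{20.06.06.16.02} then $w:=u_1-u_2\in\tilde C^{1,\infty}([0,T]\times\bR^d)$ solves $\partial_t w=\cA_Z(t)w$, $w(0,\cdot)=0$; by Lemma \ref{20.03.30.13.32}, $\cF[w(t,\cdot)]$ is a well-defined $L_1\cap C$ function, and applying $\cF$ to $w(t,\cdot)=\int_0^t\partial_t w(r,\cdot)\,dr$ and to the equation gives $\cF[w(t,\cdot)](\xi)=\int_0^t\Psi_Z(r,\xi)\cF[w(r,\cdot)](\xi)\,dr$ for every $\xi$; since $r\mapsto\Psi_Z(r,\xi)$ is bounded on $[0,T]$, Gr\"onwall's lemma forces $\cF[w(t,\cdot)]\equiv0$, hence $w\equiv0$. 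The delicate point throughout is that $a(\cdot)$ and $\Lambda_\cdot$ are only measurable in $t$, so every $t$-differentiation must be routed through the absolutely continuous function $t\mapsto e^{\int_0^t\Psi_Z(r,\xi)\,dr}$ and the resulting identities hold merely a.e.\ $t$; one then has to confirm that the candidate $u$ still admits a time derivative lying in $\tilde C^\infty$ (so that $u\in\tilde C^{1,\infty}$) and to justify carefully the interchanges between $\cA_Z(t)$, the expectation $\bE$, and the Duhamel integral, all of which rest on the uniform bound $|e^{\int_s^t\Psi_Z(r,\cdot)\,dr}|\le1$ and the $L_p$-estimate \eqref{20.04.10.16.45}.
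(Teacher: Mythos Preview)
Your proposal is correct and follows essentially the same approach as the paper: the candidate solution, its Fourier multiplier representation via $\Psi_Z$, the regularity argument by passing $D^\alpha_x$ through the expectation, and the uniqueness via Fourier transform plus Gr\"onwall are all identical to the paper's proof. The only structural difference is that where you differentiate the Fourier representation in $t$ directly (and then invoke Lemma \ref{20.04.11.14.30} to conclude $\partial_t u\in\tilde C^\infty$), the paper instead defines the auxiliary $v(t,x):=u_0(x)+\int_0^t(\cA_Z(s)u(s,x)+f(s,x))\,ds$, observes $v\in\tilde C^{1,\infty}$ by construction, and then verifies $u=v$ by a Fourier computation---this sidesteps the need to justify time differentiation under the integral, but the two arguments are equivalent.
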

\begin{proof}
\textbf{(Uniqueness)} We prove the uniqueness first. Suppose that $u\in\tilde{C}^{1,\infty}([0,T]\times\bR^d)$ is a solution of the Cauchy problem
\begin{equation*}
\begin{cases}
\frac{\p u}{\p t}(t,x)=\cA_Z(t)u(t,x),\quad &(t,x)\in(0,T)\times\bR^d,\\
u(0,x)=0,\quad & x\in\bR^d.
\end{cases}
\end{equation*}
Taking the Fourier transform to the above equation with respect to the space variable and integration with respect to the time variable, we have  
\begin{equation*}
\cF[u(t,\cdot)](\xi)=\int_0^t\Psi_Z(s,\xi)\cF[u(s,\cdot)](\xi)ds,\quad \forall 0\leq t<T, \quad \forall \xi \in \bR^d.
\end{equation*}
Note that due to Taylor's theorem and (locally) bounded assumptions on the triplet,
$$
|\Psi_Z(t,\xi)|\leq4L(t;\Lambda)(1+|\xi|^2)+ |a(t)||\xi|.
$$
Since $a(t)$ and $L(t;\Lambda)$ are in $L_{\infty}([0,T])$, for each $\xi\in\bR^d$,
$$
\Psi_Z(\cdot,\xi)\in L_{\infty}([0,T]).
$$
Hence, Gronwall's inequality yields that $u=0$. Therefore by the linearity of equation \eqref{20.06.06.16.02}, the uniqueness is proved 
\vspace{3mm}

\textbf{(Existence)} Next we prove the existence of a solution. Define
\begin{equation*}
u(t,x):=I_Z(u_0)(t,x)+F_Z(f)(t,x),
\end{equation*}
where
$$
I_Z(u_0)(t,x):=\bE[u_0(x+Z_t)],\quad \forall (t,x)\in[0,T]\times\bR^d,
$$
and
$$
F_Z(f)(t,x):=\int_{0}^t\bE[f(s,x+Z_t-Z_s)]ds,\quad  \forall(t,x)\in[0,T]\times\bR^d.
$$
It is an easy application of the Lebesgue dominated convergence theorem that for multi-index $\alpha$ with respect to the space variable and $(t,x)\in[0,T]\times\bR^d$,
\begin{equation}
\label{20.07.11.20.26}
    D^{\alpha}_xI_Z(u_0)(t,x)=I_Z(D^{\alpha}_xu_0)(t,x),\quad D^{\alpha}_xF_Z(f)(t,x)=F_Z(D^{\alpha}_xf)(t,x).
\end{equation}
Therefore $u\in\tilde{C}^{\infty}([0,T]\times\bR^d)$ since $u_0\in \tilde C^\infty(\bR^d)$, and $f\in\tilde{C}^{\infty}([0,T]\times\bR^d)$.
By Fubini's theorem, for $(t,\xi)\in(0,T]\times\bR^d$,
\begin{equation*}
\begin{aligned}
    \cF[I_Z(u_0)(t,\cdot)+F_Z(f)(t,\cdot)](\xi)&=\bE[\cF[u_0](\xi)e^{i\xi\cdot Z_t}]+\int_{0}^t\bE[\cF[f(s,\cdot)](\xi)e^{i\xi\cdot (Z_{t}-Z_s)}]ds\\
    &=\cF[u_0](\xi)e^{\int_0^t\Psi_Z(r,\xi)dr}+\int_{0}^t\cF[f(s,\cdot)](\xi)e^{\int_{s}^t\Psi_Z(r,\xi)dr}ds.
\end{aligned}
\end{equation*}
Since $|\exp(\int_{s}^t\Psi_Z(r,\cdot)dr)|\leq 1$, we have
$$
\left|\cF[u_0](\xi)e^{\int_0^t\Psi_Z(r,\xi)dr}+\int_{0}^t\cF[f(s,\xi)]e^{\int_{s}^t\Psi_Z(r,\xi)dr}ds\right|\leq |\cF[u_0](\xi)|+\int_{0}^t|\cF[f(s,\xi)]|ds.
$$
Moreover, as a result of Lemma \ref{20.03.30.13.32}, for  each $t\in[0,T]$,
$$
|\cF[u_0]|+\int_{0}^t|\cF[f(s,\cdot)]|ds\in L_1(\bR^d).
$$
Using Fubini's theorem again, for $(t,\xi)\in(0,T]\times\bR^d$, we have
\begin{equation}
\label{20.07.11.20.31}
\begin{aligned}
&I_Z(u_0)(t,x)+F_Z(f)(t,x)\\
&=\cF^{-1}[\cF[u_0]e^{\int_{0}^t\Psi_Z(r,\cdot)dr}](x)+\int_0^t\cF^{-1}[\cF[f(s,\cdot)]e^{\int_{s}^t\Psi_Z(r,\cdot)dr}](x)ds.
\end{aligned}
\end{equation}
The Lebesgue dominated convergence theorem, \eqref{20.07.11.20.31} and \eqref{20.07.11.20.26} yield that for any multi-index $\alpha$ with respect to the space variable,
$$
D^{\alpha}_xu\in C([0,T]\times\bR^d).
$$
Next, define
$$
v(t,x):=u_0(x)+\int_{0}^t(\cA_Z(s)u(s,x)+f(s,x))ds.
$$
By Lemma \ref{20.04.11.14.30} and Example \ref{20.04.22.17.12} (iv), $v\in\tilde{C}^{1,\infty}([0,T]\times\bR^d)$.
We claim that 
$$
u(t,x)=v(t,x),\quad\forall(t,x)\in[0,T]\times\bR^d,
$$
which obviously completes the proof. For $(t,x)\in[0,T]\times\bR^d$, applying Lemma \ref{20.03.30.13.32}, \ref{20.04.11.14.30}, \eqref{20.07.11.20.31} and Fubini's theorem, we have
\begin{equation*}
    \begin{aligned}
    &v(t,x)-u_0(x)-\int_0^tf(s,x)ds\\
    &=\int_{0}^t\cF^{-1}\left[\Psi_Z(s,\cdot)\left(\cF[u_0]e^{\int_{0}^s\Psi_Z(r,\cdot)dr}+\int_0^s\cF[f(l,\cdot)]e^{\int_{l}^s\Psi_Z(r,\cdot)dr}dl\right)\right](x)ds\\
    &=\cF^{-1}\left[\cF[u_0](e^{\int_{0}^t\Psi_Z(r,\cdot)dr}-1)\right](x)+\int_0^t\cF^{-1}\left[\cF[f(l,\cdot)](e^{\int_{l}^t\Psi_Z(r,\cdot)dr}-1)\right](x)dl\\
    &=I_Z(u_0)(t,x)+F_Z(f)(t,x)-u_0(x)-\int_0^tf(s,x)ds=u(t,x)-u_0(x)-\int_0^tf(s,x)ds.
    \end{aligned}
\end{equation*}
The theorem is proved.
\end{proof}
In the next corollary, we show that a solution to \eqref{20.06.06.16.02} is unique in the sense of Definition \ref{20.06.06.15.50} even for general data. The proof is based on the probabilistic solution representation and uniqueness of a solution for smooth data in Theorem \ref{20.05.04.10.58}.
\begin{corollary}
\label{20.06.06.16.06}
Let $Z$ be an additive process with a bounded triplet $(a(t),0,\Lambda_t)_{t\geq0}$. Then the Cauchy problem \eqref{20.06.06.16.02} has at most one solution in the sense of Definition \ref{20.06.06.15.50}.
\end{corollary}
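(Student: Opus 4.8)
The plan is to reduce the uniqueness statement for general data (in the sense of Definition \ref{20.06.06.15.50}) to the uniqueness for smooth data already established in Theorem \ref{20.05.04.10.58}, using the probabilistic representation as the bridge. Suppose $u^{(1)}, u^{(2)} \in L_q((0,T);H_p^{\mu;\gamma+2}(\bR^d))$ are two solutions to \eqref{20.06.06.16.02} with the same data $f \in L_q((0,T);H_p^{\mu;\gamma}(\bR^d))$ and $u_0 \in B_{p,q}^{s,\varphi;\gamma+2-\frac{2}{q}}(\bR^d)$. By definition, for each $k=1,2$ there is an approximating sequence $u_n^{(k)} \in \tilde C^{1,\infty}([0,T]\times\bR^d)$ with $u_n^{(k)}(0,\cdot) \in C_c^\infty(\bR^d)$ such that $\partial_t u_n^{(k)} - \cA_Z(t)u_n^{(k)} \to f$ in $L_q((0,T);H_p^{\mu;\gamma}(\bR^d))$, $u_n^{(k)}(0,\cdot) \to u_0$ in the Besov space, and $u_n^{(k)} \to u^{(k)}$ in $L_q((0,T);H_p^{\mu;\gamma+2}(\bR^d))$. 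Set $f_n^{(k)} := \partial_t u_n^{(k)} - \cA_Z(t)u_n^{(k)}$ and $u_{0,n}^{(k)} := u_n^{(k)}(0,\cdot)$; note $f_n^{(k)} \in \tilde C^\infty([0,T]\times\bR^d)$ by Lemma \ref{20.04.11.14.30} and $u_{0,n}^{(k)} \in C_c^\infty(\bR^d) \subset \tilde C^\infty(\bR^d)$, so Theorem \ref{20.05.04.10.58} applies: $u_n^{(k)}$ is the unique $\tilde C^{1,\infty}$-solution of the Cauchy problem with data $(f_n^{(k)}, u_{0,n}^{(k)})$, hence admits the representation
\[
u_n^{(k)}(t,x) = \bE[u_{0,n}^{(k)}(x+Z_t)] + \int_0^t \bE[f_n^{(k)}(s,x+Z_t-Z_s)]\,ds.
\]

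Next I would pass to the limit $n\to\infty$ in this representation. The key point is that the linear maps $I_Z$ and $F_Z$ from Theorem \ref{20.05.04.10.58}, given on the Fourier side by multiplication by $\exp(\int_0^t \Psi_Z(r,\cdot)\,dr)$ (resp. its two-parameter analogue), which have modulus bounded by $1$, extend to bounded operators on the relevant $L_q$-in-time, $\psi$-Bessel-potential-in-space scales; since the multiplier commutes with $(1-\psi^{\tilde\mu}(D))^{\gamma/2}$ and is a contraction on $L_p(\bR^d)$ for each fixed time, one gets
\[
\|I_Z(g)\|_{L_q((0,T);H_p^{\mu;\theta}(\bR^d))} \le N\|g\|_{B_{p,q}^{s,\varphi;\theta-\frac{2}{q}}(\bR^d)}, \qquad \|F_Z(h)\|_{L_q((0,T);H_p^{\mu;\theta}(\bR^d))} \le N(1+T)\|h\|_{L_q((0,T);H_p^{\mu;\theta}(\bR^d))}
\]
(the first being essentially the a priori estimate \eqref{2020082301} of the main theorem, at level $\theta = \gamma+2$). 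Applying these continuity properties with $g = u_{0,n}^{(k)} \to u_0$ and $h = f_n^{(k)} \to f$, together with $u_n^{(k)} \to u^{(k)}$ in $L_q((0,T);H_p^{\mu;\gamma+2}(\bR^d))$, we conclude
\[
u^{(k)}(t,x) = I_Z(u_0)(t,x) + F_Z(f)(t,x) \quad \text{in } L_q((0,T);H_p^{\mu;\gamma+2}(\bR^d))
\]
for $k=1,2$. The right-hand side does not depend on $k$, so $u^{(1)} = u^{(2)}$, which is the claim.

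The main obstacle is the limit-passage step: one must be careful that the representation for $u_n^{(k)}$, a priori an identity of continuous functions, survives the passage to the quasi-Banach limit. The cleanest route is to avoid re-deriving a representation for $u^{(k)}$ and instead argue at the level of the difference: from the two approximating sequences one has $u_n^{(1)} - u_m^{(2)} = I_Z(u_{0,n}^{(1)} - u_{0,m}^{(2)}) + F_Z(f_n^{(1)} - f_m^{(2)})$ for all $n,m$ (valid because $I_Z, F_Z$ are linear and Theorem \ref{20.05.04.10.58} gives the representation for each smooth solution separately), then take $n,m \to \infty$ using the contraction bounds above, obtaining $u^{(1)} - u^{(2)} = I_Z(0) + F_Z(0) = 0$. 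A minor subtlety is that Definition \ref{20.06.06.15.50} allows $q \in (0,1)$, where the relevant space is only a quasi-Banach space; but the operator bounds used here are genuine norm (or quasi-norm) estimates and the argument is purely a continuity-plus-linearity argument, so it goes through verbatim. One should also note that existence of the approximating sequences is part of the hypothesis (it is what "being a solution" means), so no construction is needed here — only the algebraic identity and the continuity of $I_Z$ and $F_Z$.
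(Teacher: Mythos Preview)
Your overall strategy matches the paper's: write the smooth approximations via the probabilistic representation from Theorem~\ref{20.05.04.10.58}, then pass to the limit using continuity of $I_Z$ and $F_Z$. Your treatment of $F_Z$ via the contraction property is fine and is exactly what the paper has in mind.

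The gap is in your handling of $I_Z$. You justify the bound $\|I_Z(g)\|_{L_q((0,T);H_p^{\mu;\theta})} \le N\|g\|_{B_{p,q}^{s,\varphi;\theta-2/q}}$ by appealing to ``essentially the a priori estimate \eqref{2020082301} of the main theorem''. But that estimate requires Assumptions~\ref{nondegenerate} and~\ref{boundedness}, neither of which is a hypothesis of Corollary~\ref{20.06.06.16.06}. So as written your argument only proves uniqueness under those extra hypotheses, which is strictly weaker than what is claimed. (It is also logically awkward: in the paper's structure, Corollary~\ref{20.06.06.16.06} is invoked at the start of the proof of Theorem~\ref{main1}, so you should not lean on the latter to prove the former.)

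The fix is simpler than what you wrote and uses an ingredient you already mentioned: $I_Z(h)(t,\cdot)=\bE[h(\cdot+Z_t)]$ is convolution with the law of $Z_t$, hence a contraction on $L_p(\bR^d)$, and it commutes with the Littlewood--Paley projections $\varphi_j$. Therefore
\[
\|I_Z(h)(t,\cdot)\|_{B_{p,q}^{s,\varphi;\alpha}(\bR^d)} \le \|h\|_{B_{p,q}^{s,\varphi;\alpha}(\bR^d)} \qquad \text{for every } t\in[0,T],
\]
which gives $I_Z(u_{0,n}^{(1)}-u_{0,m}^{(2)}) \to 0$ in $L_\infty\bigl([0,T];B_{p,q}^{s,\varphi;\gamma+2-2/q}(\bR^d)\bigr)$, hence in $L_q\bigl((0,T);\cS'(\bR^d)\bigr)$. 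Combined with your $F_Z$ bound (which lands in $L_q((0,T);H_p^{\mu;\gamma})$, also embedded in $L_q((0,T);\cS')$), you get $u_n^{(1)}-u_m^{(2)}\to 0$ in $L_q((0,T);\cS')$. Since the same difference also converges to $u^{(1)}-u^{(2)}$ in $L_q((0,T);H_p^{\mu;\gamma+2})\hookrightarrow L_q((0,T);\cS')$, uniqueness of limits gives $u^{(1)}=u^{(2)}$. This is precisely the content of the paper's terse sentence ``we conclude that $u=v$ as a tempered distribution valued functions'', and it needs no hypotheses beyond the bounded triplet.
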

\begin{proof}
Let $u,v\in L_q((0,T);H_p^{\mu;\gamma+2}(\bR^d))$ be solutions to Cauchy problem \eqref{20.06.06.16.02}. Then by the definition of solution, one can find $u_n,v_n\in\tilde{C}^{1,\infty}([0,T]\times\bR^d)$ such that $u_n(0,\cdot),v_n(0,\cdot)\in C_c^{\infty}(\bR^d)$ and 
\begin{equation*}
\begin{gathered}
\frac{\p u_n}{\p t}-\cA_Z(t)u_n,\quad \frac{\p v_n}{\p t}-\cA_Z(t)v_n\to f\quad\text{in}\quad L_q((0,T);H_p^{\mu;\gamma}(\bR^d)),\\
u_n(0,\cdot),v_n(0,\cdot)\to u_0\quad \text{in }\quad B_{p,q}^{s.\varphi;\gamma+2-\frac{2}{q}}(\bR^d)\\
u_n\to u \quad\text{in}\quad L_q((0,T);H_p^{\mu;\gamma+2}(\bR^d)),\\
v_n\to v \quad\text{in}\quad L_q((0,T);H_p^{\mu;\gamma+2}(\bR^d)).
\end{gathered}
\end{equation*}
Let
\begin{equation*}
    \begin{gathered}
    f_n:=\frac{\p u_n}{\p t}-\cA_Z(t)u_n\in\tilde{C}^{\infty}([0,T]\times\bR^d),\\
    g_n:=\frac{\p v_n}{\p t}-\cA_Z(t)v_n\in\tilde{C}^{\infty}([0,T]\times\bR^d).
    \end{gathered}
\end{equation*}
Then by Proposition \ref{20.05.04.10.58},
\begin{equation*}
    \begin{gathered}
    u_n(t,x)=\bE[u_n(0,x+Z_t)]+\int_{0}^t\bE[f_n(s,x+Z_t-Z_s)]ds,\\
    v_n(t,x)=\bE[v_n(0,x+Z_t)]+\int_{0}^t\bE[g_n(s,x+Z_t-Z_s)]ds.
    \end{gathered}
\end{equation*}
Since both $f_n$ and $g_n$ converge to $f$ in $L_q((0,T);H_p^{\mu;\gamma}(\bR^d))$ and both $u_n(0,\cdot)$ and $v_n(0,\cdot)$ converge to $u(0,\cdot)$ in $B_{p,q}^{s,\varphi;\gamma+2-\frac{2}{q}}(\bR^d)$, we conclude that $u=v$ as a tempered distribution valued functions.
Finally, we have $u=v$ in $L_q((0,T);H_p^{\mu;\gamma+2}(\bR^d))$ since both $u$ and $v$ are in the class $L_q((0,T);H_p^{\mu;\gamma+2}(\bR^d))$. 
The corollary is proved.
\end{proof}

\mysection{Properties of function spaces}
\label{20.08.20.17.29}
There are many well-known interesting properties for the classical Bessel potential and Besov spaces such as the denseness of smooth functions, the duality property, and interpolation properties (cf. \cite{triebel1978interpolation,triebel1983theory}). In this section, we provide that  lots of useful properties still hold even for  $\psi$-Bessel potential spaces and scaled Besov spaces. 
For the readers' convenience, the detailed proofs of the following results are attached in the appendix (Section \ref{20.08.20.15.44}).
\begin{prop}
\label{properties}
Let $\mu$ be a symmetric L\'evy measure on $\bR^d$ and denote
$$
\psi^{\mu}(\xi):=\int_{\bR^d}(e^{iy\cdot\xi}-1-iy\cdot\xi1_{|y|\leq1})\mu(dy).
$$
For $\alpha \in \bR$ and $\lambda\geq0$, we define the pseudo-differential operator
$$
\left(\lambda - \psi^{\mu}(D)\right)^{\alpha/2}f(x):=\cF^{-1}[\left(\lambda - \psi^{\mu}\right)^{\alpha/2}\cF[f]](x) \quad x \in \bR^d
$$ 
and
$$
\psi^{\mu}(D)f(x):=\cF^{-1}[\psi^{\mu}\cF[f]](x).
$$
Then the following statements hold:
\begin{enumerate}[(i)]
\item If $\phi_n\to\phi$ in $\cS(\bR^d)$, then $\phi_n\to\phi$ in $H_p^{\mu;\gamma}(\bR^d)$.
    \item For $p\in[1,\infty)$, $\gamma\in\bR$, and $f\in L_p(\bR^d)$, $(1-\psi^{\mu}(D))^{\gamma/2}f\in H_p^{\mu;-\gamma}(\bR^d)$.
    \item For $p\in[1,\infty)$ and $\gamma\in\bR$, $H_p^{\mu;\gamma}(\bR^d)$ is a Banach space equipped with the norm $\|\cdot\|_{H_p^{\mu;\gamma}(\bR^d)}$.
    \item For $p \in [1,\infty)$ and $\gamma \in \bR$, 
        $$
    \tilde{C}^{\infty}(\bR^d) \subseteq H_p^{\mu;\gamma}(\bR^d).
    $$
Moreover, $C_c^\infty(\bR^d)$ is dense in $H_p^{\mu;\gamma}(\bR^d)$. 

\item For $p\in[1,\infty)$ and $\gamma\geq0$, the norm $\|\cdot\|_{H_p^{\mu;\gamma}(\bR^d)}$ is equivalent to $\|\cdot\|_{L_p(\bR^d)}+\|\cdot\|_{\dot{H}_p^{\mu;\gamma}(\bR^d)}$, where
$$
\|f\|_{\dot{H}_p^{\mu;\gamma}(\bR^d)}:=\|(-\psi^{\mu}(D))^{\gamma/2}f\|_{L_p(\bR^d)}.
$$

\item For $p\in[1,\infty)$ and $\gamma_1,\gamma_2\in\bR$, the operator 
$$
(1-\psi^{\mu}(D))^{-\gamma_2/2}:H^{\mu;\gamma_1}_p(\bR^d)\to H^{\mu;\gamma_1+\gamma_2}_p(\bR^d)
$$
is a bijective isometry.

\item  For $p\in(1,\infty)$ and $\gamma\in\bR$,
$$
\left(H_p^{\mu;\gamma}(\bR^d)\right)^*=H_{p'}^{\mu;-\gamma}(\bR^d),
$$
where $\left(H_p^{\mu;\gamma}(\bR^d)\right)^*$ is the topological dual space of $H_p^{\mu;\gamma}(\bR^d)$ and $\frac{1}{p}+\frac{1}{p'}=1.$

\item For $p\in(1,\infty)$ and $\gamma_1\leq \gamma_2$,
$H_p^{\mu;\gamma_2}(\bR^d)$ is continuously embedded into $H_p^{\mu;\gamma_1}(\bR^d)$, i.e. there exists a positive constant $N$ such that
$$
\|f\|_{H_p^{\mu;\gamma_1}(\bR^d)}\leq N\|f\|_{H_p^{\mu;\gamma_2}(\bR^d)},
$$
for all $f\in\cS(\bR^d)$. Moreover, for $0\leq \gamma_1\leq \gamma_2$, $H_1^{\mu;\gamma_2}(\bR^d)$ is continuously embedded into $H_1^{\mu;\gamma_1}(\bR^d)$.

\item For $p_0,p_1\in(1,\infty)$, $\gamma_0,\gamma_1\in\bR$, and $0<\theta<1$,
$$
[H_{p_0}^{\mu;\gamma_0}(\bR^d),H_{p_1}^{\mu;\gamma_1}(\bR^d)]_\theta=H_p^{\mu;\gamma}(\bR^d),
$$
where 
$$
\gamma=(1-\theta)\gamma_0+\theta\gamma_1,\quad\frac{1}{p}=\frac{1-\theta}{p_0}+\frac{\theta}{p_1},
$$
and $[B_0,B_1]_\theta$ is a complex interpolation space between Banach spaces $B_0$ and $B_1$.
\end{enumerate}
\end{prop}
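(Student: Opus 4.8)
\emph{Overall strategy.} Because $\tilde\mu$ is symmetric, $\psi^{\tilde\mu}$ is the (real, continuous, nonpositive) characteristic exponent of the symmetric L\'evy process $Y$ with triplet $(0,0,\tilde\mu)$; hence $1-\psi^{\tilde\mu}\ge 1$ and $e^{t\psi^{\tilde\mu}(D)}$ is convolution with the law $\rho_t$ of $Y_t$, a probability measure. The whole proposition rests on two multiplier facts. \emph{(a) Positive orders.} For $\gamma\ge 0$ and an integer $k$ with $2k$ large (depending on $\gamma,d$), the symbol $(1-\psi^{\tilde\mu}(\xi))^{\gamma/2}(1+|\xi|^2)^{-k}$ is bounded and remains in $L_2(\bR^d)$ after multiplication by $(1+|\xi|^2)^{d_0}$ with $d_0>d/4$; by the Plancherel--H\"older argument already used in Lemma \ref{20.03.30.13.32} it is therefore $\cF[G_{k,\gamma}]$ for some $G_{k,\gamma}\in L_1(\bR^d)$, so
$$
(1-\psi^{\tilde\mu}(D))^{\gamma/2}=(1-\Delta)^k\circ\bigl(G_{k,\gamma}\ast\,\cdot\,\bigr)
$$
is a bona fide operator on $\cS'(\bR^d)$ (and the same for $(-\psi^{\tilde\mu}(D))^{\gamma/2}$); in particular $H_p^{\mu;\gamma}(\bR^d)$ is well defined for all $\gamma\in\bR$. \emph{(b) Negative and mixed orders.} For $\gamma>0$ the Gamma-function identity applied to $\lambda=1-\psi^{\tilde\mu}(\xi)\ge 1$ yields
$$
(1-\psi^{\tilde\mu}(D))^{-\gamma/2}=\Gamma_\gamma\ast\,\cdot\,,\qquad \Gamma_\gamma:=\tfrac1{\Gamma(\gamma/2)}\int_0^\infty t^{\gamma/2-1}e^{-t}\rho_t\,dt,
$$
a probability measure, so $(1-\psi^{\tilde\mu}(D))^{-\gamma/2}$ is a contraction on $L_p(\bR^d)$ for all $1\le p\le\infty$ simultaneously; and expanding $\bigl(\tfrac{-\psi^{\tilde\mu}}{1-\psi^{\tilde\mu}}\bigr)^{\beta}=\bigl(1-\tfrac1{1-\psi^{\tilde\mu}}\bigr)^{\beta}$ in a binomial series in powers of the probability measure $\Gamma_2$ (absolutely summable since $\beta>0$) shows $(-\psi^{\tilde\mu}(D))^{\beta}(1-\psi^{\tilde\mu}(D))^{-\gamma/2}$ is convolution with a finite signed measure whenever $0\le\beta\le\gamma/2$. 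Together with the elementary pointwise law $(1-\psi^{\tilde\mu})^{a}(1-\psi^{\tilde\mu})^{b}=(1-\psi^{\tilde\mu})^{a+b}$, these give the composition rule on $\cS'(\bR^d)$ and the fact that $(1-\psi^{\tilde\mu}(D))^{\gamma/2}$ is a bijective isometry of $H_p^{\mu;\gamma}(\bR^d)$ onto $L_p(\bR^d)$.

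\emph{The routine items.} Item (vi) is then immediate from the composition rule and the norms, (ii) is its special case $(\gamma_1,\gamma_2)=(\gamma,-\gamma)$, and (iii) follows because $H_p^{\mu;\gamma}(\bR^d)$ is isometric to the complete space $L_p(\bR^d)$. For (i) and the inclusion $\tilde{C}^\infty(\bR^d)\subseteq H_p^{\mu;\gamma}(\bR^d)$ of (iv): when $\gamma\ge 0$ write $(1-\psi^{\tilde\mu}(D))^{\gamma/2}f=G_{k,\gamma}\ast(1-\Delta)^k f$ and apply Young's inequality, using that all space-derivatives of $f$ (resp. of $\phi_n-\phi$) lie in $L_p$; when $\gamma<0$ the operator is an $L_p$-contraction and $f$ (resp. $\phi_n-\phi$) is already in $L_p$. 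Density of $C_c^\infty(\bR^d)$ is the usual argument: approximate $(1-\psi^{\tilde\mu}(D))^{\gamma/2}f\in L_p$ by $C_c^\infty$ functions, pull them back, then smooth and truncate, the convergence transferring through the operators since they commute with convolutions and with $(1-\Delta)^k$. Item (viii) follows since $(1-\psi^{\tilde\mu}(D))^{(\gamma_1-\gamma_2)/2}$, $\gamma_1\le\gamma_2$, is convolution with a probability measure; and (vii) follows from the isometry $H_p^{\mu;\gamma}(\bR^d)\cong L_p(\bR^d)$, the duality $L_p^*=L_{p'}$, the self-adjointness of $(1-\psi^{\tilde\mu}(D))^{\gamma/2}$ (from symmetry of $\tilde\mu$), and the density just obtained.

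\emph{Items (v), (ix) and the main obstacle.} In (v), $\|f\|_{L_p}\le\|f\|_{H_p^{\mu;\gamma}}$ and $\|(-\psi^{\tilde\mu}(D))^{\gamma/2}f\|_{L_p}\le N\|f\|_{H_p^{\mu;\gamma}}$ come directly from (b); for the reverse inequality I would use the subordinator representation $\lambda^{\sigma}=\tfrac{\sigma}{\Gamma(1-\sigma)}\int_0^\infty(1-e^{-t\lambda})t^{-1-\sigma}\,dt$, $\sigma\in(0,1)$, to realise $(1-\psi^{\tilde\mu}(D))^{\gamma/2}-(-\psi^{\tilde\mu}(D))^{\gamma/2}$ as a finite-measure convolution for $\gamma\in(0,2)$, and reduce general $\gamma\ge 0$ to this by peeling off integer factors of $1-\psi^{\tilde\mu}(D)=1+(-\psi^{\tilde\mu}(D))$ and bounding the lower-order pieces $(-\psi^{\tilde\mu}(D))^{\beta}$, $0\le\beta\le\gamma/2$, via the finite-measure operators above together with the Bernstein/Stieltjes calculus for the subordinate kernels. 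Item (ix) is the classical complex-interpolation-of-a-scale argument (as for the Bessel potential scale in Triebel's or Bergh--L\"ofstr\"om's books): representing $f\in[H_{p_0}^{\mu;\gamma_0}(\bR^d),H_{p_1}^{\mu;\gamma_1}(\bR^d)]_\theta$ by an analytic $F(z)$ and setting $G(z)=(1-\psi^{\tilde\mu}(D))^{(\gamma(z)-\gamma)/2}F(z)$ with $\gamma(z)=(1-z)\gamma_0+z\gamma_1$, one gets $G(\theta)=F(\theta)$ while $G(j+it)\in H_p^{\mu;\gamma}(\bR^d)$, after which the equal-order couples reduce to $[L_{p_0}(\bR^d),L_{p_1}(\bR^d)]_\theta=L_p(\bR^d)$ via the isometry of (vi). The single non-classical ingredient here --- and what I expect to be the hard part of the whole proposition --- is that the imaginary powers $(1-\psi^{\tilde\mu}(D))^{it}$ are bounded on $L_p(\bR^d)$ for $1<p<\infty$ with admissible growth in $t$, \emph{despite $\psi^{\tilde\mu}$ carrying no smoothness}; I would derive this from the fact that $(e^{t\psi^{\tilde\mu}(D)})_{t\ge0}$ is a symmetric contraction (sub-Markovian) semigroup on the $L_p$-scale, so that $1-\psi^{\tilde\mu}(D)$ has bounded imaginary powers by the Stein--Cowling theory of such semigroups.
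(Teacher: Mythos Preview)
Your subordination and Bernstein-function machinery in (b), your approach to (v), and the Stein--Cowling route to bounded imaginary powers for (ix) are all correct and in fact more explicit than the paper, which simply refers (v)--(ix) to the Farkas--Jacob--Schilling monograph. The gap is in your item (a). You assert that because $m(\xi):=(1-\psi^{\tilde\mu}(\xi))^{\gamma/2}(1+|\xi|^2)^{-k}$ satisfies $(1+|\xi|^2)^{d_0}m\in L_2(\bR^d)$, the Plancherel--H\"older trick of Lemma~\ref{20.03.30.13.32} gives $G_{k,\gamma}:=\cF^{-1}[m]\in L_1(\bR^d)$. But that argument runs the other way: decay of $m$ in $\xi$ produces \emph{smoothness} of $\cF^{-1}[m]$, not integrability. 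To obtain $\cF^{-1}[m]\in L_1$ by this method you would need $(1-\Delta_\xi)^{d_0}m\in L_2$, hence $2d_0$ derivatives of $\psi^{\tilde\mu}$. For a general L\'evy measure this fails: as Lemma~\ref{smoothness}(ii) records, the contribution to $\psi^{\tilde\mu}$ from $\{|y|>R\}$ is merely uniformly continuous, and if $\int_{|y|>1}|y|\,\tilde\mu(dy)=\infty$ already $\nabla_\xi\psi^{\tilde\mu}$ does not exist. So $G_{k,\gamma}$ need not lie in $L_1$, the factorisation $(1-\psi^{\tilde\mu}(D))^{\gamma/2}=(1-\Delta)^k\circ(G_{k,\gamma}\ast\cdot)$ collapses, and your proofs of (i) and the inclusion in (iv) for $\gamma>0$ do not go through.

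The paper avoids symbol smoothness entirely by working with the integro-differential representation
\[
\psi^{\tilde\mu}(D)f(x)=\int_{\bR^d}\bigl(f(x+y)-f(x)-y\cdot\nabla f(x)\,1_{|y|\le1}\bigr)\tilde\mu(dy),
\]
which with Taylor's theorem gives $\|\psi^{\tilde\mu}(D)f\|_{L_p}\le 4L(\mu)\sum_{|\alpha|\le2}\|D^\alpha f\|_{L_p}$ directly; fractional powers $\alpha\in(0,2)$ are then expressed via the same Bernstein identity you invoke for (v), rewritten after one $t$-integration as an average of $\bE[(1-\psi^{\tilde\mu}(D))f(\cdot+Z_r)]$. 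This yields $\|(1-\psi^{\tilde\mu}(D))^{\gamma/2}f\|_{L_p}\le N\sum_{|\alpha|\le\lceil\gamma\rceil+2}\|D^\alpha f\|_{L_p}$ for $f\in\tilde C^\infty(\bR^d)$ without touching derivatives of the symbol, and (i) and (iv) follow. Replace your (a) by this nonlocal--Bernstein argument and the remainder of your proof is fine.
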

To state properties of a scaled Besov space, we need a definition of a Banach space valued sequence space.
\begin{defn}
For $q\in(0,\infty)$, $\gamma\in\bR$, scaling triple $\boldsymbol{s}=(s,s_L.s_U)$, and Banach space $B$, $l_q^{s;\gamma}(B)$ denote the set of all sequences $x=(x_0,x_1\cdots)$ in $B$ such that
$$
\|x\|_{l_p^{s;\gamma}(B)}:=\|x_0\|_B+\left(\sum_{j=1}^{\infty}s(c_s^{-j})^{-\frac{q\gamma}{2}}\|x_j\|_{B}^q\right)^{1/q}<\infty.
$$
We also denote $l_{\infty}^{s;\gamma}(B)$ denote the set of all sequences $x=(x_0,x_1\cdots)$ in $B$ such that
$$
\|x\|_{l_{\infty}^{s;\gamma}(B)}:=\|x_0\|_B+\sup_{j\in\bN}s(c_s^{-j})^{-\frac{\gamma}{2}}\|x_j\|_{B}<\infty.
$$
\end{defn}

We connect the above Banach space-valued sequence to the scaled Besov space $B_{p,q}^{s,\varphi;\gamma}(\bR^d)$ considering the mapping
\begin{equation*}
f\in  \cS'(\bR^d)  \mapsto I(f):=(f\ast\varphi_0,f\ast\varphi_1,\cdots).
\end{equation*}
In particular, it is easy to check that
$$
\|I(f)\|_{l_q^{s;\gamma}(L_p(\bR^d))} 
= \|f\|_{B_{p,q}^{s,\varphi;\gamma}(\bR^d)}.
$$
Recall that 
$$
m_s:=\min\{m\in\bN:s_L(2^m)> 1\},\quad c_s:=2^{m_s}>1, 
$$
and $\Phi_n(\bR^d)$ denotes a set of Littlewood-Paley functions whose frequency support is within $\{\xi \in \bR^d : n^{-1} \leq |\xi| \leq n\}$ (Definition \ref{LP function}).
\begin{prop}
\label{20.04.29.20.24}
Let $\gamma\in\bR$, $\boldsymbol{s}=(s,s_L,s_U)$ be a scaling triple, and $\varphi\in\Phi_{c_s}(\bR^d)$.
\begin{enumerate}[(i)]
\item For $p\in[1,\infty]$, $q\in(0,\infty]$, there exist a positive constant $N$ such that
\begin{equation*}
N^{-1}\|f\|_{B_{p,q}^{m_s\theta_0\gamma}(\bR^d)}\leq\|I(f)\|_{l_q^{s;\gamma}(L_p(\bR^d))} 
= \|f\|_{B_{p,q}^{s,\varphi;\gamma}(\bR^d)}\leq N\|f\|_{B_{p,q}^{m_s\theta_1\gamma}(\bR^d)},
\end{equation*}
for all $f\in\cS'(\bR^d)$, where $\theta_0=\log_{c_s}(s_L(c_s))$, $\theta_1=\log_{c_s}(s_U(c_s))$ and $N$ is independent of $f$.
\item For $p\in[1,\infty]$, $q\in(0,\infty]$, $B_{p,q}^{s,\varphi;\gamma}(\bR^d)$ is a quasi-Banach space equipped with the quasi-norm $\|\cdot\|_{B_{p,q}^{s,\varphi;\gamma}(\bR^d)}$. In particular, $B_{p,q}^{s,\varphi;\gamma}(\bR^d)$ is a Banach space if $p,q\geq1$.
\item For $p\in[1,\infty]$ and $q\in(0,\infty]$,
    $$
    \tilde{C}^{\infty}(\bR^d) \subseteq B_{p,q}^{s,\varphi;\gamma}(\bR^d).
    $$
    Moreover, if $p\in[1,\infty)$ and $q\in(0,\infty)$, then $C_c^{\infty}(\bR^d)$ is dense in $B_{p,q}^{s,\varphi;\gamma}(\bR^d)$.
    
\item For $p\in[1,\infty]$, $q\in(0,\infty]$, $\vartheta\in\Phi_{c_s}(\bR^d)$, the two quasi-norms $\|\cdot\|_{B_{p,q}^{s,\varphi;\gamma}(\bR^d)}$ and $\|\cdot\|_{B_{p,q}^{s,\vartheta;\gamma}(\bR^d)}$ are equivalent.
\item For $p\in[1,\infty)$ and $f\in\cS'(\bR^d)$,
\begin{equation}
\label{20.08.20.10.49}
    \begin{gathered}
    \|f\|_{B_{p,q}^{s,\varphi;\gamma_1}(\bR^d)}\leq \|f\|_{B_{p,q}^{s,\varphi;\gamma_2}(\bR^d)},\quad q\in(0,\infty],\,-\infty<\gamma_1\leq\gamma_2<\infty,\\
    \|f\|_{B_{p,q_2}^{s,\varphi;\gamma}(\bR^d)}\leq \|f\|_{B_{p,q_1}^{s,\varphi;\gamma}(\bR^d)},\quad \gamma\in\bR,\,0<q_1\leq q_2\leq\infty.
    \end{gathered}
\end{equation}
\item For $p\in[1,\infty)$ and $q\in[1,\infty)$, the topological dual space of $B_{p,q}^{s,\varphi;\gamma}(\bR^d)$ is a subspace of tempered distribution space $\cS'(\bR^d)$.
\item Let $q_0,q_1\in[1,\infty)$, $\gamma_0,\gamma_1\in\bR$, and $B$ be  a Banach space. Then for $\theta\in[0,1]$,
$$
[l_{q_0}^{s;\gamma_0}(B),l_{q_1}^{s;\gamma_1}(B)]_{\theta}=l_q^{s;\gamma}(B),
$$
where
$$
\frac{1}{q}=\frac{1-\theta}{q_0}+\frac{\theta}{q_1},\quad \gamma=(1-\theta)\gamma_0+\theta\gamma_1.
$$
\item Let $p\in[1,\infty]$, $q_0,q_1\in[1,\infty)$, and $\gamma_0,\gamma_1\in\bR$. Then for $\theta\in[0,1]$,
$$
[B_{p,q_0}^{s,\varphi;\gamma_0}(\bR^d),B_{p,q_1}^{s,\varphi;\gamma_1}(\bR^d)]_{\theta}=B_{p,q}^{s;\gamma}(\bR^d),
$$
where $[B_0,B_1]_{\theta}$ denotes a complex interpolation space between $B_0$ and $B_1$ and
$$
\quad\frac{1}{q}=\frac{1-\theta}{q_0}+\frac{\theta}{q_1},\quad \gamma=(1-\theta)\gamma_0+\theta\gamma_1.
$$
\end{enumerate}
\end{prop}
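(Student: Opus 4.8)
The plan is to deduce every item by transferring it to the classical Besov scale or to a weighted vector-valued sequence space, where the analogous fact is available; the two devices that make this possible are the two-sided comparison~(i) itself and the observation that $I\colon f\mapsto(f\ast\varphi_0,f\ast\varphi_1,\dots)$ is a norm-preserving coretraction of $B_{p,q}^{s,\varphi;\gamma}(\bR^d)$ into $l_q^{s;\gamma}(L_p(\bR^d))$, with retraction a reconstruction operator $(x_j)\mapsto\sum_j x_j\ast\vartheta_j$ built from an admissible dual Littlewood--Paley system (so the composition is the identity on $\cS'(\bR^d)$, by the almost orthogonality of Remark~\ref{20.04.29.20.00}). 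I would settle~(i) first, as it is the keystone. Writing $c_s=2^{m_s}$ and telescoping the right inequality of \eqref{20.04.28.13.46} along $c_s^{-j}<c_s^{-j+1}<\dots<c_s^0$, each factor $s(c_s^{-k+1})/s(c_s^{-k})$ lies in $[s_L(c_s),s_U(c_s)]$, so
\[
s_L(c_s)^{\,j}\ \le\ \frac{s(1)}{s(c_s^{-j})}\ \le\ s_U(c_s)^{\,j}\qquad(j\ge1),
\]
and since $s_L(c_s)=c_s^{\theta_0}$, $s_U(c_s)=c_s^{\theta_1}$ this pins the weight $s(c_s^{-j})^{-\gamma/2}$ between fixed multiples of $2^{\,jm_s\theta_0\gamma/2}$ and $2^{\,jm_s\theta_1\gamma/2}$. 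Because the frequency support of $\varphi_j$ ($\varphi\in\Phi_{c_s}$) is the annulus $\{c_s^{j-1}\le|\xi|\le c_s^{j+1}\}$, which meets only boundedly many dyadic annuli, a routine regrouping together with the classical insensitivity of the Besov norm to the particular admissible Littlewood--Paley partition (\cite{bergh2012interpolation}) upgrades this to the claimed chain $N^{-1}\|f\|_{B_{p,q}^{m_s\theta_0\gamma}(\bR^d)}\le\|f\|_{B_{p,q}^{s,\varphi;\gamma}(\bR^d)}\le N\|f\|_{B_{p,q}^{m_s\theta_1\gamma}(\bR^d)}$.

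Given~(i), items~(ii)--(vi) follow by bootstrapping. For~(ii), $l_q^{s;\gamma}(L_p(\bR^d))$ is quasi-Banach (Banach when $p,q\ge1$) by the usual argument, $I$ is a linear isometry onto its range, and that range is closed: by~(i), $B_{p,q}^{s,\varphi;\gamma}(\bR^d)\hookrightarrow B_{p,q}^{m_s\theta_0\gamma}(\bR^d)\hookrightarrow\cS'(\bR^d)$, so a Cauchy sequence converges in $\cS'(\bR^d)$ and its frequency pieces converge in $L_p$. For~(iii), any $f\in\tilde C^\infty(\bR^d)$ has frequency pieces decaying faster than any power (Bernstein's inequality), hence lies in $B_{p,q}^{m_s\theta_1\gamma}(\bR^d)\hookrightarrow B_{p,q}^{s,\varphi;\gamma}(\bR^d)$; for density of $C_c^\infty(\bR^d)$ when $q<\infty$, the band-limited partial sums $\sum_{k=0}^m f\ast\varphi_k$ converge to $f$ in $B_{p,q}^{s,\varphi;\gamma}(\bR^d)$ (tail of a convergent series, with the boundary terms controlled by almost orthogonality), each such partial sum is a band-limited $L_p$-function hence lies in every classical Besov space where $C_c^\infty(\bR^d)$ is dense, and the upper bound in~(i) transfers the approximation back. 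Item~(iv) is the almost orthogonality of Remark~\ref{20.04.29.20.00} with Young's inequality and the comparability $s(c_s^{-j})\approx s(c_s^{-j\pm1})$ from \eqref{20.04.28.13.46} at ratio $c_s$; item~(v) is the monotonicity of $t\mapsto t^{-a}$ at $t=s(c_s^{-j})\le1$ for $j\ge1$ (after the harmless normalization $s(1)\le1$) together with the nesting of weighted $\ell^q$-spaces; and item~(vi) holds because $\cS(\bR^d)$ embeds continuously (by~(i)) and densely (by~(iii)) into $B_{p,q}^{s,\varphi;\gamma}(\bR^d)$, so the transpose embeds the dual injectively and continuously into $\cS'(\bR^d)$.

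Finally, for~(vii) I would identify $l_q^{s;\gamma}(B)$ with $B\oplus\ell^q(w;B)$, $w(j)=s(c_s^{-j})^{-\gamma/2}$, and invoke that complex interpolation commutes with finite direct sums together with the classical identity $[\ell^{q_0}(w_0;B),\ell^{q_1}(w_1;B)]_\theta=\ell^q(w_0^{1-\theta}w_1^\theta;B)$ (\cite{bergh2012interpolation,triebel1978interpolation}), noting $w_0^{1-\theta}w_1^\theta(j)=s(c_s^{-j})^{-((1-\theta)\gamma_0+\theta\gamma_1)/2}$; item~(viii) then follows from~(vii) and the retract theorem for the complex method, since $B_{p,q}^{s,\varphi;\gamma}(\bR^d)$ is a retract of $l_q^{s;\gamma}(L_p(\bR^d))$ through $I$ and the reconstruction operator. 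The step I expect to be the main obstacle is~(i): once the scaling triple is correctly turned into dyadic weights everything else is the classical Besov and interpolation machinery, so the delicate part is carrying out that conversion cleanly across the two Littlewood--Paley scales, with the band-limited approximation in~(iii) the only other place needing genuine care.
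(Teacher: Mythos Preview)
Your proposal is correct and follows essentially the same route as the paper: the retract structure (the paper isolates this as Lemma~\ref{20.06.28.16.49}, with exactly your $I$ and a reconstruction $P$), the telescoped weight comparison for~(i), and the reduction of the remaining items to classical Besov and weighted-$\ell^q$ facts are all precisely what the paper does. The only cosmetic deviations are that in~(iii) the paper establishes $\tilde C^\infty\subseteq B_{p,q}^\gamma$ via the second-difference (modulus-of-continuity) characterization rather than your Bernstein-type decay of frequency pieces, and in~(vi) it sandwiches the dual between two classical Besov duals via~(i) rather than arguing directly from the dense continuous embedding of $\cS$.
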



\mysection{Proof of Theorem \ref{main1}}
\label{20.08.20.17.31}
We first prove a priori estimate for a smooth function in nonnegative even integer order spaces, which is the most crucial part of the proof of our main theorem. For convenience, we write $c$ instead of $c_s$.

\begin{lem}
\label{22.07.13.00.07}
    Let $Z$ be an additive process and $\mu$ be a L\'evy measure in $\bR^d$. Then
    \begin{equation*}
\int_{\bR^d}|\bE[\varphi(x+Z_t)]|dx+\|\psi^{\mu}(D)\varphi\|_{L_1(\bR^d)}\leq N\sum_{|\alpha|\leq2}\|D^{\alpha}\varphi\|_{L_1(\bR^d)},\quad \forall \varphi\in \cS(\bR^d),
    \end{equation*}
where $N$ depend only on $\mu$.
\end{lem}
\begin{proof}
    By Minkowski's inequality
    $$
\int_{\bR^d}|\bE[\varphi(x+Z_t)]|dx\leq \|\varphi\|_{L_1(\bR^d)},
    $$
    and by \eqref{20.04.10.16.45}
    $$
    \|\psi^{\mu}(D)\varphi\|_{L_1(\bR^d)}\leq 4L(\mu)\sum_{|\alpha|\leq 2}\|D^{\alpha}\varphi\|_{L_1(\bR^d)},
    $$
    where
    $$
    L(\mu):=\int_{\bR^d}(1\wedge|y|^2)\mu(dy).
    $$
    The lemma is proved.
\end{proof}

\begin{lem}[A priori estimate]
								\label{a priori lem}
Let $p\in[1,\infty)$, $q\in(0,\infty)$, $T\in(0,\infty)$, $n\in \bN \cup \{0\}$, $\mu\in\frL_d$, $\boldsymbol{s}=(s,s_L,s_U)$ be a scaling triple, and $Z$ be an additive process with a bounded triplet $(a(t),0,\Lambda_t)_{t\geq0}$. If Assumptions \ref{22.06.26.18.46}  and \ref{22.06.26.18.47} hold, then  there exists a positive constant $N$ such that for all $u_0\in C_c^{\infty}(\bR^d)$,
\begin{equation}
    \|(-\psi^{\mu}(D))^{n}I_Z(u_0)\|_{L_q((0,T);L_p(\bR^d))}\leq N\|u_0\|_{B_{p,q}^{s;2n-\frac{2}{q}}(\bR^d)},
\end{equation}
where 
$$
I_Z(u_0)(t,x):=\bE[u_0(x+Z_t)].
$$
\end{lem}
\begin{proof}
By Theorem \ref{20.05.04.10.58},
$$
u(t,x):=I_Z(u_0)(t,x)\in \tilde{C}^{1,\infty}([0,T]\times\bR^d)
$$
is a unique solution to the Cauchy problem
\begin{equation*}
\begin{cases}
\frac{\p u}{\p t}(t,x)=\cA_Z(t)u(t,x),\quad &(t,x)\in(0,T)\times\bR^d,\\
u(0,x)=u_0(x),\quad & x\in\bR^d.
\end{cases}
\end{equation*}
One can easily check that for each $\varphi\in\Phi_c(\bR^d)$,
\begin{equation*}
\begin{gathered}
\tilde{\varphi}:=\cF^{-1}[\cF[\varphi](c^{-1}\cdot)+\cF[\varphi]+\cF[\varphi](c\cdot)]\in\Phi_{c^2}(\bR^d),
\end{gathered}
\end{equation*}
and
$$
\cF[\varphi](c^{-j}\cdot)=\cF[\varphi](c^{-j}\cdot)\cF[\tilde{\varphi}](c^{-j}\cdot),\quad \forall j\in\bN.
$$
Recall
$$
\psi^{\mu}(\xi)=\int_{\bR^d}(e^{iy\cdot\xi}-1-iy\cdot\xi 1_{|y|\leq 1}) \mu(dy).
$$
For $n\in\bN\cup\{0\}$ and $j\in\bN$, 
\begin{equation*}
\begin{aligned}
\cF[(-\psi^{\mu}(D))^nu(t,\cdot)\ast\varphi_j]&=(-\psi^{\mu})^n\exp\left(\int_{0}^t\Psi_{Z}(r,\cdot)dr\right)\cF[u_0]\cF[\varphi](c^{-j}\cdot)\\
	&=(\cF[\cI_1^j])^n\cF[\cI_2^j](t,\cdot)\cF[u_0]\cF[\varphi](c^{-j}\cdot),
\end{aligned}
\end{equation*}
where 
\begin{equation*}
\cI_1^j(x):=-\cF^{-1}[\psi^{\mu}\cF[\tilde{\varphi}](c^{-j}\cdot)](x)=-\sum_{i=-1}^{1}c^{(j+i)d}\cF^{-1}[\psi^{\mu}(c^{j+i}\cdot)\cF[\varphi]](c^{j+i}x)
\end{equation*}
and
\begin{equation*}
\begin{aligned}
\cI_2^j(t,x)&:=\cF^{-1}\left[\exp\left(\int_{0}^{t}\Psi_{Z}(r,\cdot)dr\right)\cF[\tilde{\varphi}](c^{-j}\cdot)\right](x)\\
    &=\sum_{i=-1}^1c^{(j+i)d}\cF^{-1}\left[\exp\left(\int_{0}^{t}\Psi_{Z}(r,c^{j+i}\cdot)dr\right)\cF[\varphi]\right](c^{j+i}x).
\end{aligned}
\end{equation*}
Recalling $c \geq 2$ and applying Assumptions \ref{22.06.26.18.46} and \ref{22.06.26.18.47}, we have
\begin{equation*}
\begin{aligned}
\int_{\bR^d}|\cI_1^j(x)|dx&=N\sum_{i=-1}^{1}\int_{\bR^d}\left|\cF^{-1}[\psi^{\mu}(c^{j+i}\cdot)\cF[\varphi]](x)\right|dx\\
&\leq N\sum_{i=-1}^{1}s(c^{-j-i})^{-1}\leq Ns(c^{-j})^{-1},\\
\int_{\bR^d}|\cI_2^j(t,x)|dx&=\sum_{i=-1}^{1}\int_{\bR^d}\left|\cF^{-1}\left[\exp\left(\int_{0}^{t}\Psi_{Z}(r,c^{j+i}\cdot)dr\right)\cF[\varphi]\right](x)\right|dx\\
&\leq Ne^{-N's(c^{-j})^{-1}t}.
\end{aligned}
\end{equation*}
Similarly,
\begin{equation*}
\begin{aligned}
&\cF[(-\psi^{\mu}(D))^nu(t,\cdot)\ast\varphi_0]=(-\psi^{\mu})^n\exp\left(\int_{0}^t\Psi_{Z}(r,\cdot)dr\right)\cF[u_0]\cF[\varphi_0]\\
&=(-\psi^{\mu}\cF[\tilde{\varphi}_0])^n\exp\left(\int_{0}^t\Psi_{Z}(r,\cdot)dr\right)\cF[\tilde{\varphi}_0]\cF[u_0]\cF[\varphi_0]\\
&=(\cF[\cI_1^0])^n\cF[\cI_2^0](t,\cdot)\cF[u_0]\cF[\varphi_0],
\end{aligned}
\end{equation*}
where
\begin{equation*}
\begin{gathered}
\cI_1^0(x):=-\cF^{-1}[\psi^{\mu}\cF[\tilde{\varphi}_0]](x)~\text{and}~ \cI_2^0(t,x):=\cF^{-1}\left[\exp\left(\int_{0}^t\Psi_{Z}(r,\cdot)dr\right)\cF[\tilde{\varphi}_0]\right](x)=\bE[\tilde{\varphi}_0(x+Z_t)].
\end{gathered}
\end{equation*}
By Lemma \ref{22.07.13.00.07},
$$
\int_{\bR^d}|\cI_1^0(x)|dx+
\int_{\bR^d}|\cI_2^0(t,x)|dx\leq N.
$$
Hence, by Young's convolution inequality, there exists a positive constant $N$ such that for all $n\in\bN\cup\{0\}$,
\begin{equation*}
\begin{aligned}
&\|(-\psi^{\mu}(D))^nu(t,\cdot)\ast\varphi_j\|_{L_p(\bR^d)}\\
&\leq N\left(s(c^{-j})^{-n}e^{-N's(c^{-j})^{-1}t}1_{j\in\bN}+1_{j=0}\right)\|u_0\ast\varphi_j\|_{L_p(\bR^d)},
\end{aligned}
\end{equation*}
where $N$ depends only on $d,n,s,\tilde{\varphi},\tilde{\varphi}_0$ and constants in Assumptions \ref{22.06.26.18.46} and \ref{22.06.26.18.47}.
In addition, by Minkowski's inequality,
\begin{equation*}
\begin{aligned}
\int_{0}^{T}\|(-\psi^{\mu}(D))^nu(t,\cdot)\|_{L_p(\bR^d)}^qdt&=\int_{0}^{T}\left\|\sum_{j=0}^{\infty}(-\psi^{\mu}(D))^n(u(t,\cdot)\ast\varphi_j)\right\|_{L_p(\bR^d)}^{q} dt\\
&\leq N\int_{0}^{T}\left(\sum_{j=0}^{\infty}\|(-\psi^{\mu}(D))^nu(t,\cdot)\ast\varphi_j\|_{L_p(\bR^d)}\right)^{q}dt\\
&\leq N \|u_0\ast\varphi_0\|_{L_p(\bR^d)}^q+N\cI(q)\\
&\leq N \|u_0\|_{B_{p,q}^{s;2n-\frac{2}{q}}}^q+N\cI(q),
\end{aligned}
\end{equation*}
where 
$$
\cI(q)=\int_{0}^{T}\left(\sum_{j=1}^{\infty}s(c^{-j})^{-n}e^{-N's(c^{-j})^{-1}t}\|u_0\ast\varphi_j\|_{L_p(\bR^d)}\right)^{q}dt
$$
and $N$ is independent of $u$ and $u_0$ but depends on $T$.
By \eqref{2020082402} in Proposition \ref{poly}, we have
\begin{align}
								\label{2020082410}
 N c^{ j\theta_0}  \leq s(c^{-j})^{-1}  \leq N^{-1} c^{ j\theta_1}   \qquad \forall  j \geq 1.
\end{align}
Thus if $0<q \leq 1$, then by \eqref{2020082410}  and Fubini's theorem,
\begin{align*}
    \cI(q)
    &\leq\sum_{j=1}^{\infty}s(c^{-j})^{-nq}\|u_0\ast\varphi_j\|_{L_p(\bR^d)}^{q}\int_{0}^Te^{-qN's(c^{-j})^{-1}t}dt \\
&\leq N\|u_0\|_{B_{p,q}^{2n-\frac{2}{q}}(\bR^d)},
\end{align*}
where $N$ is independent of $u_0$. 
On the other hand, if $q>1$ then we divide the computations into three parts.
Recalling $\theta_0=\log_{c}(s_L(c))$, $\theta_1=\log_{c}(s_U(c))$, we set
\begin{equation*}
    \begin{gathered}
    \cI_1(q):=\int_{0}^{1}\left(\sum_{j\in J_1(t)}s(c^{-j})^{-n}e^{-N's(c^{-j})^{-1}t}\|u_0\ast\varphi_j\|_{L_p(\bR^d)}\right)^{q}dt,\\
    \cI_2(q):=\int_{0}^{1}\left(\sum_{j\in J_2(t)}s(c^{-j})^{-n}e^{-N's(c^{-j})^{-1}t}\|u_0\ast\varphi_j\|_{L_p(\bR^d)}\right)^{q}dt,\\
    \cI_3(q):=1_{T>1}\int_{1}^{T}\left(\sum_{j=1}^{\infty}s(c^{-j})^{-n}e^{-N's(c^{-j})^{-1}t}\|u_0\ast\varphi_j\|_{L_p(\bR^d)}\right)^{q}dt,
    \end{gathered}
\end{equation*}
where
\begin{equation*}
    J_1(t):=\{j\in\bN:c^{j\theta_1}t\leq1\}~ \text{and}~ J_2(t):=\{j\in\bN:c^{j\theta_1}t\geq 1\}.
\end{equation*}
Then it is obvious that
\begin{equation*}
    J_1(t)\cup J_2(t)=\bN \qquad \forall t >0
\end{equation*}
and
$$
\cI(q)\leq N(\cI_1(q)+\cI_2(q)+\cI_3(q))
$$
with a positive constant $N$ depending only on $q$. 
First, we estimate $\cI_1(q)$. 
Applying H\"older's inequality to $\cI_1(q)$ with a $a\in(0,1)$,  we have
\begin{align*}
\cI_1(q)
&=\int_{0}^{1}\left(\sum_{j\in J_1(t)}s(c^{-j})^{-n}e^{-N's(c^{-j})^{-1}t}\|u_0\ast\varphi_j\|_{L_p(\bR^d)}\right)^{q}dt  \\
&=\int_{0}^{1}\left(\sum_{j\in J_1(t)} c^{j\theta_1 a/q  } c^{-j\theta_1 a/q} s(c^{-j})^{-n}e^{-N's(c^{-j})^{-1}t}\|u_0\ast\varphi_j\|_{L_p(\bR^d)}\right)^{q}dt  \\
&\leq\int_{0}^{1} U_1(t)^{q-1}U_2(t)dt,
\end{align*}
where
\begin{equation*}
    U_1(t):=\sum_{j\in J_1(t)}c^{\frac{j\theta_1a}{q-1}}\quad \text{and}\quad U_2(t):=\sum_{j\in J _1(t)}s(c^{-j})^{-nq}\|u_0\ast\varphi_j\|_{L_p(\bR^d)}^qc^{-j\theta_1a}.
\end{equation*}
For each $t>0$, put $j_1(t)=\sup J_1(t)$ and note that $j_1(t) \in J_1(t) \subset\bN$ since $J_1(t)$ is a finite set.
Then since all terms are nonnegative in the summation and $c^{j_1(t)\theta_1 }\leq t^{-1}$, we have
$$
U_1(t)= \sum_{j\in J_1(t)}c^{\frac{j\theta_1a}{q-1}}\leq \sum_{j=-\infty}^{j_1(t)}c^{\frac{j\theta_1a}{q-1}} = \frac{c^{\frac{j_1(t)\theta_1a}{q-1}}}{1-c^{\frac{-\theta_1a}{q-1}}}\leq N(q,c,a,\theta_1)t^{-\frac{a}{q-1}}.
$$
Moreover, by Fubini's theorem and \eqref{2020082410},
\begin{equation*}
\begin{aligned}
\cI_1(q)\leq\int_{0}^1U_1(t)^{q-1}U_2(t)dt &\leq N\int_{0}^1\sum_{j\in J_1(t)}s(c^{-j})^{-nq}\|u_0\ast\varphi_j\|_{L_p(\bR^d)}^qc^{-j\theta_1a}t^{-a}dt\\
&\leq N\sum_{j=1}^{\infty}s(c^{-j})^{-nq}\|u_0\ast\varphi_j\|_{L_p(\bR^d)}^qc^{-aj\theta_1}\int_{0}^{c^{-j\theta_1}}t^{-a}dt\\
&=N\sum_{j=1}^{\infty}s(c^{-j})^{-nq}\|u_0\ast\varphi_j\|_{L_p(\bR^d)}^qc^{-j\theta_1}\\
&\leq N\sum_{j=1}^{\infty}s(c^{-j})^{-nq+1}\|u_0\ast\varphi_j\|_{L_p(\bR^d)}^p \leq N \|u_0\|_{B_{p,q}^{s;2n-\frac{2}{q}}}^q,
\end{aligned}
\end{equation*}
where $N$ is independent of $u_0$ and $T$. In particular, we can take $a=1/2$ in the above estimates. 
Second, we estimate $\cI_2(q)$. Using H\"older's inequality to $\cI_2(q)$ with a $b \in (0,1)$, we obtain
\begin{equation*}
    \cI_2(q)=\int_{0}^{1}\left(\sum_{j\in J_2(t)}s(c^{-j})^{-n}e^{-N's(c^{-j})^{-1}t}\|u_0\ast\varphi_j\|_{L_p(\bR^d)}\right)^{q}dt\leq \int_{0}^{1}U_3(t)^{q-1}U_4(t)dt,
\end{equation*}
where
\begin{equation*}
    \begin{gathered}
    U_3(t):=\sum_{j\in J_2(t)}c^{\frac{j\theta_0b}{q-1}}\exp(-N'(q-1)^{-1}s(c^{-j})^{-1}t),\\
    U_4(t):=\sum_{j\in J_2(t)}s(c^{-j})^{-nq}e^{-qN's(c^{-j})^{-1}t}\|u_0\ast\varphi_j\|_{L_p(\bR^d)}^qc^{-j\theta_0b}.
    \end{gathered}
\end{equation*}
For all $t \in (0,1)$, applying \eqref{2020082410} and changing variables, we have
\begin{equation*}
    \begin{aligned}
    U_3(t)&\leq\sum_{j\in J_2(t)}c^{\frac{j\theta_0b}{q-1}}\exp(-Nc^{j\theta_0}t)\leq N'\int_{0}^{\infty}1_{tc^{\lambda \theta_1}> 1}c^{\frac{\lambda\theta_0b}{q-1}}\exp(-Nc^{\lambda\theta_0}t)d\lambda\\
    &= N'\int_{t^{-\theta_0/\theta_1}}^{\infty}\lambda^{\frac{b}{q-1}}e^{-N\lambda t}\lambda^{-1}d\lambda=N't^{-\frac{b}{q-1}}\int_{t^{1-\frac{\theta_0}{\theta_1}}}^{\infty}\lambda^{\frac{b}{q-1}-1}e^{-N\lambda}d\lambda\\
    &\leq N't^{-\frac{b}{q-1}},
    \end{aligned}
\end{equation*}
where $N$ and $N'$ are independent of $t$.
By Fubini's theorem,
\begin{equation*}
    \begin{aligned}
     \cI_2(q)\leq\int_{0}^{1}U_3(t)^{q-1}U_4(t)dt&\leq N\int_{0}^1\sum_{j=1}^{\infty}s(c^{-j})^{-nq}t^{-b}e^{-qN's(c^{-j})^{-1}t}\|u_0\ast\varphi_j\|_{L_p(\bR^d)}^qdt\\
     &\leq N\sum_{j=1}^{\infty}s(c^{-j})^{-nq}s(c^{-j})^{1+b}\|u_0\ast\varphi_j\|_{L_p(\bR^d)}^q\int_{0}^{\infty}t^{-b}e^{-Nt}dt\\
     &\leq N\sum_{j=1}^{\infty}s(c^{-j})^{-\frac{(2n)q}{2}}s(c^{-j})^{-\frac{(-2/q) \cdot q}{2} }\|u_0\ast\varphi_j\|_{L_p(\bR^d)}^q\int_{0}^{\infty}t^{-b}e^{-Nt}dt\\
     &\leq N \|u_0\|_{B_{p,q}^{s;2n-\frac{2}{q}}(\bR^d)}^q,
    \end{aligned}
\end{equation*}
where $N$ is independent of $u_0$ and $T$. Particularly, we can take $b=1/2$ in the above estimates.
Finally, it only remains to compute the last term $\cI_3(q)$. If $T>1$, then $\cI_3(q) \neq 0$.
Moreover, by H\"older's inequality,
\begin{align}
							\notag
    \cI_3(q)
    &=\int_{1}^{T}\left(\sum_{j=1}^{\infty}s(c^{-j})^{-n}e^{-N's(c^{-j})^{-1}t}\|u_0\ast\varphi_j\|_{L_p(\bR^d)}\right)^{q}dt\\
								\label{2020082430}
    &\leq  \sum_{j=1}^{\infty} s(c^{-j})^{-n q} \|u_0\ast\varphi_j\|_{L_p(\bR^d)}^q \int_{1}^{T}  U_5(t)^{q-1}\cdot  e^{-N's(c^{-j})^{-1}t}  dt,
\end{align}
where 
 $$
U_5(t)=\sum_{j=1}^{\infty}\exp\left(- N' s(c^{-j}_s)^{-1}t\right).
 $$
Note that by \eqref{2020082410}, 
$$
U_5(t) \leq \sum_{j=1}^{\infty}\exp\left(- N' c^{j \theta} \right)  <\infty \qquad \forall t \geq 1.
$$
and thus
\begin{align}
								\label{2020082431}
\int_{1}^{T}  U_5(t)^{q-1}\cdot  e^{-N's(c^{-j})^{-1}t}  dt
\leq Ns(c^{-j}).
\end{align}
Putting \eqref{2020082431} in \eqref{2020082430}, we have
\begin{align*}
    \cI_3(q)
    &\leq N\sum_{j=1}^{\infty} s(c^{-j})^{-n q+1} \|u_0\ast\varphi_j\|_{L_p(\bR^d)}^q= N \|u_0\|_{B_{p,q}^{s;2n-\frac{2}{q}}(\bR^d)}^q.
\end{align*}
Therefore, combining all estimates above, we obtain
\begin{equation*}
\|(-\psi^{\mu}(D))^nu\|_{L_q((0,T);L_p(\bR^d))}\leq N\|u_0\|_{B_{p,q}^{s;2n-\frac{2}{q}}(\bR^d)}, \quad \forall n\in\bN\cup\{0\},
\end{equation*}
where $N$ is independent of $u_0$.
In particular,
\begin{align*}
&\|u\|_{L_q((0,T);L_p(\bR^d))} +\|(-\psi^{\mu}(D))^nu\|_{L_q((0,T);L_p(\bR^d))}\\
&\leq N \left(\|u_0\|_{B_{p,q}^{s;-\frac{2}{q}}(\bR^d)} + \|u_0\|_{B_{p,q}^{s;2n-\frac{2}{q}}(\bR^d)}\right), \quad \forall n\in\bN\cup\{0\},
\end{align*}
Finally, applying Propositions \ref{properties}($v$) and \ref{20.04.29.20.24}($v$), we conclude that
$$
u=I_Z(u_0)\in L_q((0,T);H_p^{\mu;2n}(\bR^d))
$$
and
\begin{equation*}
\|u\|_{L_q((0,T);H_p^{\mu;2n}(\bR^d))}\leq N \|u_0\|_{B_{p,q}^{s;2n-\frac{2}{q}}(\bR^d)},
\end{equation*}
where $N$ is independent of $u$ and $u_0$. The lemma is proved.
\end{proof}
Next, we consider a general order by applying a complex interpolation theorem. 
Since Calder\'on's complex interpolation theorem is based on a duality property of $L_q$-spaces, the range of $q$ is restricted to $q  \in [1,\infty)$.
\begin{corollary}[A priori estimate]
\label{20.08.20.14.30}
Let $p\in(1,\infty)$, $q\in[1,\infty)$, $T\in(0,\infty)$, $\gamma\in (1,\infty)$, $\mu\in\frL_d$, $\boldsymbol{s}=(s,s_L,s_U)$ be a scaling triple, and $Z$ be an additive process with a bounded triplet $(a(t),0,\Lambda_t)_{t\geq0}$. If Assumptions \ref{22.06.26.18.46}  and \ref{22.06.26.18.47} hold, then for any $u_0\in C_c^{\infty}(\bR^d)$,
\begin{equation}
    \|(-\psi^{\mu}(D))^{\gamma/2}I_Z(u_0)\|_{L_q((0,T);L_p(\bR^d))}\leq N\|u_0\|_{B_{p,q}^{s;\gamma-\frac{2}{q}}(\bR^d)},
\end{equation}
where $N$ is independent of $u_0,T$ and
$$
I_Z(u_0)(t,x):=\bE[u_0(x+Z_t)].
$$
\end{corollary}
\begin{proof}
We apply the complex interpolation theorem to Lemma \ref{a priori lem}.
For $\gamma\in(0,\infty)\setminus\bN$, there exist $m\in\bN\cup\{0\}$ and $\theta\in(0,1)$ such that
$$\gamma=2m(1-\theta)+(2m+2)\theta.$$
Therefore, by Proposition \ref{properties} $(ix)$, \cite[Theorem 5.1.2]{bergh2012interpolation}, \cite[Theorem C.2.6]{hytonen2016analysis}, and Proposition \ref{20.04.29.20.24} $(viii)$,
\begin{equation*}
\begin{aligned}
\|I_Z(u_0)\|_{L_q((0,T);H_p^{\mu;\gamma}(\bR^d))}&:=\|u\|_{L_q((0,T);H_p^{\mu;\gamma}(\bR^d))}\\
&\approx\|u\|_{L_q((0,T);[H_p^{\mu;2m}(\bR^d),H_p^{\mu;2m+2}(\bR^d)]_{\theta})}\\
&\leq N \|u_0\|_{[B_{p,q}^{s;2m-\frac{2}{q}}(\bR^d),B_{p,q}^{s;2m+2-\frac{2}{q}}(\bR^d)]_{\theta}}\\
&\approx N \|u_0\|_{B_{p,q}^{s;\gamma-\frac{2}{q}}(\bR^d)},
\end{aligned}
\end{equation*}
where $N$ is independent of $u$ and $u_0$. The corollary is proved.
\end{proof}

\textbf{Proof of Theorem \ref{main1}}. 
Due to Corollary \ref{20.06.06.16.06}, it is sufficient to prove the existence of a solution and the a priori estimate \eqref{2020082301}.
By Proposition \ref{20.04.29.20.24}(iii), there exist sequence $h_l\in C_c^{\infty}(\bR^d)$ such that
\begin{equation*}
    \begin{gathered}
    h_l\to u_0,\quad\text{in } B_{p,q}^{s;\gamma-\frac{2}{q}}(\bR^d),
    \end{gathered}
\end{equation*}
as $l\to\infty$. By Theorem \ref{20.05.04.10.58}, for each $l$, the IVP
\begin{equation}
\label{20.07.16.10.05}
\begin{cases}
\frac{\p u_l}{\p t}(t,x)=\cA_Z(t)u_l(t,x),\quad &(t,x)\in(0,T)\times\bR^d,\\
u_l(0,x)=h_l(x),\quad & x\in\bR^d,
\end{cases}
\end{equation}
has a unique solution $u_l = I_Z(h_l)\in \tilde{C}^{1,\infty}([0,T]\times\bR^d)$, where
$$
I_Z(u_0)(t,x)=\bE[h_l(x+Z_t)].
$$
By Corollary \ref{20.08.20.14.30}, we have
\begin{equation}
\label{20.07.16.10.06}
\|u_l\|_{L_q((0,T);H_p^{\mu;\gamma}(\bR^d))}\leq N\|h_l\|_{B_{p,q}^{s;\gamma-\frac{2}{q}}(\bR^d)},
\end{equation}
where $N$ is independent of $l$ and $T$. By the linearity of \eqref{20.07.16.10.05} and \eqref{20.07.16.10.06}, $\{u_l\}_{l=1}^{\infty}$ is a Cauchy sequence in $L_q((0,T);H_p^{\mu;\gamma}(\bR^d))$. 
Thus as a result of Proposition \ref{properties}(iii), there exists a $u\in L_q((0,T);H_p^{\mu;\gamma}(\bR^d))$ such that
$$
u_l\to u,\quad \text{in } L_q((0,T);H_p^{\mu;\gamma}(\bR^d))
$$
as $l\to\infty$. 
Taking $l \to \infty$ in \eqref{20.07.16.10.06}, we have
\begin{equation*}
\|u\|_{L_q((0,T);H_p^{\mu;\gamma}(\bR^d))}\leq N\|u_{0}\|_{B_{p,q}^{s;\gamma-\frac{2}{q}}(\bR^d)}.
\end{equation*}
Moreover, by Definition \ref{20.06.06.15.50}, one can easily check that $u$ becomes a solution to the Cauchy problem
\begin{equation*}
\begin{cases}
\frac{\p u}{\p t}(t,x)=\cA_Z(t)u(t,x),\quad &(t,x)\in(0,T)\times\bR^d,\\
u(0,x)=u_{0}(x),\quad & x\in\bR^d.
\end{cases}
\end{equation*}
The theorem is proved.

\mysection{Appendix}
			\label{20.08.20.15.44}
In this appendix, we prove Propositions \ref{22.07.07.16.05}, \ref{22.07.07.16.05-2}, \ref{22.07.07.16.05-3}, \ref{properties}, and  \ref{20.04.29.20.24}. We split the proofs into three subsections.

\subsection{Proof of Propositions \ref{22.07.07.16.05}, \ref{22.07.07.16.05-2}, and \ref{22.07.07.16.05-3}}
\label{22.07.07.14.36}

First we prove that L\'evy measures are closed under a scaling. 
Recall that $\frL_d$ is a set of L\'evy measures on $\bR^d$ and for $\mu\in\frL_d$ and $c>0$,
$$
\mu^c(dy):=\mu(c\,dy) \quad \text{and} \quad L(\mu):=\int_{\bR^d}(1\wedge|y|^2)\mu(dy).
$$
\begin{prop}
\label{20.04.07.14.35}
If $\mu\in\frL_d$ and $c>0$, then $\mu^c\in\frL_d$.
\end{prop}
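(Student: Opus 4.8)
The plan is to verify the three defining conditions of a L\'evy measure for $\mu^c$ one at a time. First I would note that the dilation $T_c\colon\bR^d\to\bR^d$, $T_c(y)=cy$, is a homeomorphism, hence Borel bi-measurable, so that $A\mapsto\mu(cA)=\mu(T_c(A))$ is a well-defined nonnegative Borel measure; this is precisely $\mu^c$. Since $T_c(\{0\})=\{0\}$, we immediately get $\mu^c(\{0\})=\mu(\{0\})=0$.

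The only point requiring work is the finiteness of $L(\mu^c)$. First I would record the change-of-variables identity
\[
\int_{\bR^d}f(y)\,\mu^c(dy)=\int_{\bR^d}f(c^{-1}y)\,\mu(dy),
\]
valid for every nonnegative Borel measurable $f$: it holds for $f=1_A$ directly from $\mu^c(A)=\mu(cA)$, hence for simple functions by linearity, and then for general $f$ by monotone convergence (this is the same identity already invoked in \eqref{20.07.14.14.24} and Assumption~\ref{boundedness}). Taking $f(y)=1\wedge|y|^2$ gives $L(\mu^c)=\int_{\bR^d}(1\wedge|c^{-1}y|^2)\,\mu(dy)$. Then I would use the elementary pointwise bound
\[
1\wedge|c^{-1}y|^2\le(1\vee c^{-2})(1\wedge|y|^2),\qquad y\in\bR^d,
\]
checked by splitting into the cases $|y|\le1$ (where the left-hand side is $\le c^{-2}|y|^2$) and $|y|>1$ (where it is $\le1=1\wedge|y|^2$). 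Combining the two displays yields $L(\mu^c)\le(1\vee c^{-2})\,L(\mu)<\infty$, so $\mu^c\in\frL_d$.

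There is no genuine obstacle here; this is a warm-up lemma. The two places deserving a line of care are the passage from indicators to general $f$ in the change-of-variables formula and the explicit constant in the comparison inequality, both routine. The statement is recorded only because closure of $\frL_d$ under dilations is what legitimizes treating $\Lambda_t(r\,dy)$ and $\mu(r\,dy)$ as L\'evy measures throughout the density estimates of Section~\ref{20.08.20.17.30}.
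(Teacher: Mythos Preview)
Your proof is correct and follows essentially the same approach as the paper: both reduce to bounding $L(\mu^c)=\int_{\bR^d}(1\wedge|c^{-1}y|^2)\,\mu(dy)$ by a constant times $L(\mu)$ via an elementary pointwise comparison. Your constant $1\vee c^{-2}$ is in fact slightly sharper than the paper's $1_{c\ge1}+\frac{1+c^2}{c^2}1_{0<c<1}$, and you are a bit more explicit about why $\mu^c$ is a Borel measure with $\mu^c(\{0\})=0$, but the argument is the same.
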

\begin{proof}
If suffices to show that $L(\mu^c)$ is finite.
\begin{equation*}
\begin{aligned}
L(\mu^c)&=\int_{\bR^d}\left(1\wedge\frac{|x|^2}{c^2}\right)\mu(dx)\\
	&\leq\int_{\bR^d}(1\wedge|x|^2)1_{c\geq1}+\left[\frac{|x|^2}{c^2}1_{|x|<1}+1_{|x|\geq1}\right]1_{0<c<1}\mu(dx)\\
	&\leq L(\mu)\left(1_{c\geq1}+\frac{1+c^2}{c^2}1_{0<c<1}\right)<\infty.
\end{aligned}
\end{equation*}
The proposition is proved.
\end{proof}
Recall that for an additive process $Z$ with a bounded triplet  $(a(t),A(t),\Lambda_t)_{t\geq0}$,
$$
\Psi_Z(t,\xi):=ia(t)\cdot\xi-\frac{1}{2}(A(t)\xi\cdot\xi)+\int_{\bR^d}(e^{iy\cdot\xi}-1-iy\cdot\xi1_{|y|\leq1})\Lambda_t(dy).
$$
On the other hand, if a bounded triplet $(a(t),A(t),\Lambda_t)_{t\geq0}$ is given, then there exists a an additive process $Z$ 
(\cite[Theorem 9.8]{sato1999levy}) such that
\begin{equation*}
\begin{gathered}
\bE[e^{i\xi\cdot (Z_t-Z_s)}]=e^{\int_s^t\Psi_Z(r,\xi)dr},\\
\end{gathered}
\end{equation*}
for all $0\leq s<t<\infty$ and $\xi\in\bR^d$.
\begin{prop}
					\label{20.05.03.19.17}
Let $Z$ be an additive process with a bounded triplet  $(a(t),A(t),\Lambda_t)_{t\geq0}$.
For each $c\in(0,\infty)$, define
\begin{equation}
\label{20.04.28.14.40}
a^{c}_{\Lambda}(t):=\frac{1}{c}\left(a(t)-1_{c\in(0,1)}\int_{c<|y|\leq1}y\Lambda_t(dy)+1_{c\in(1,\infty)}\int_{1<|y|\leq c}y\Lambda_t(dy)\right),~\text{and}~ A^c(t):=\frac{1}{c^2}A(t).
\end{equation}
Then there exists an additive process $Z^c$ with the bounded triplet $(a^{c}_{\Lambda}(t),A^c(t),\Lambda_t^c)_{t\geq0}$.
Moreover, for all $t\geq0$ and $\xi\in\bR^d$,
\begin{equation}
\label{22.07.09.17.25}
\Psi_Z(t,\xi/c)=\Psi_{Z^c}(t,\xi). 
\end{equation}
\end{prop}
\begin{proof}
By proposition \ref{20.04.07.14.35}, $\Lambda_t^c$ is a L\'evy measure on $\bR^d$ for all $t \geq 0$ and $c>0$.
Thus \cite[Theorem 9.8]{sato1999levy} guarantees the existence of an additive process $Z^c$ with bounded the triplet $(a^{c}_{\Lambda}(t),A^c(t),\Lambda^c_t)_{t\geq0}$.
Moreover, by \eqref{20.04.28.14.40}, it is easy to check that 
\begin{equation*}
\begin{aligned}
&\Psi_{Z^c}(t,\xi)=ia^{c}_{\Lambda}(t)\cdot\xi-\frac{1}{2}(A^c(t)\xi\cdot\xi)+\int_{\bR^d}(e^{iy\cdot\xi}-1-iy\cdot\xi1_{|y|\leq1})\Lambda^c_t(dy)\\
	&\quad=\frac{i}{c}\left(a(t)-1_{c\in(0,1)}\int_{c<|y|\leq1}y\Lambda_t(dy)+1_{c\in(1,\infty)}\int_{1<|y|\leq c}y\Lambda_t(dy)\right)\cdot\xi-\frac{1}{2c^2}(A(t)\xi\cdot\xi)\\
	&\quad \quad+\int_{\bR^d}(e^{iy\cdot\xi/c}-1-ic^{-1}y\cdot\xi1_{|y|\leq1})\Lambda_t(dy)+\frac{i}{c}\left(1_{c\in(0,1)}\int_{c<|y|\leq1}y\Lambda_t(dy)-1_{c\in(1,\infty)}\int_{1<|y|\leq c}y\Lambda_t(dy)\right)\cdot\xi\\
	&\quad=ia(t)\cdot\frac{\xi}{c}-\frac{1}{2}\left(A(t)\frac{\xi}{c}\right)\cdot\frac{\xi}{c}+\int_{\bR^d}\left(e^{iy\cdot\frac{\xi}{c}}-1-iy\cdot\frac{\xi}{c}1_{|y|\leq1}\right)\Lambda_t(dy)=\Psi_Z(t,\xi/c).
\end{aligned}
\end{equation*}
The proposition is proved.
\end{proof}

\begin{lem}
					\label{polynomial}
Let $\nu$ be a L\'evy measure on $\bR^d$, and $c_1$ be a positive constant. 
Assume that $c_1 \geq1$ and
\begin{equation}
					\label{eqn 20220718 21}
    \inf_{|\xi|=1}\int_{|y|\leq N_\nu }|y\cdot\xi|^2\nu(dy) >0
\end{equation}
for a  constant $N_\nu \in (0,\infty)$. 
Then there exists a positive constant $N$  such that for all $\xi\in B_{c_1}$,
$$
\int_{\bR^d}(1-\cos(y\cdot\xi))\nu_{c_1,N_\nu}( dy)
\geq N |\xi|^2,
$$
where $\nu_{c_1,N_\nu}(dy) :=  1_{|y| \leq 1} \nu( N_\nu c_1\, dy)$.
\end{lem}
\begin{proof}
Without loss of generality, we assume that $\xi\in B_{c_1}\setminus\{0\}$. Note that 
\begin{align}
							\label{eqn 20220718 01}
1-\cos(y\cdot\xi) \geq 0 \qquad \forall \xi~\text{and}~ \forall y
\end{align}
and
\begin{equation}
							\label{cosine}
1-\cos(\lambda)\geq\frac{1}{4}\lambda^2 \quad \forall \lambda \in [-1,1].
\end{equation}
Thus by \eqref{eqn 20220718 01}, \eqref{cosine}, and \eqref{eqn 20220718 21},
\begin{align*}
\int_{ \bR^d }(1-\cos(y\cdot\xi))\nu_{c_1,N_\nu}(dy)
&= \int_{|y|\leq 1 }(1-\cos(y\cdot\xi))\nu(N_\nu c_1\, dy) \\
&= \int_{|y|\leq N_\nu c_1 }\left(1-\cos\left(  \frac{ y\cdot\xi}{N_\nu c_1} \right) \right) \nu(dy) \\
& \geq \int_{  |y|\leq N_\nu }\left(1-\cos\left(  \frac{ y\cdot\xi}{N_\nu c_1} \right) \right)\nu(  dy) \\
&\geq N\int_{|y|\leq N_\nu}|y\cdot\xi|^2\nu(dy) \\
&\geq  N |\xi|^2.
\end{align*}
The lemma is proved.
\end{proof}

\begin{lem}
						\label{lem 20220715 01}
Let $s$ be a scaling function with $R_0=\infty$ and $\nu$ be a L\'evy measure on $\bR^d$. 
Assume that 
\begin{equation}
							\label{eqn 20220718 20}
    \inf_{r \in (0, \infty),|\xi|=c} s(r)\int_{|y|\leq N_\nu}(1-\cos(y \cdot \xi))\nu(r\, dy) >0,
\end{equation}
where $c$ is a positive constant. 
Then there exists a positive constant $N$  such that 
for all $\xi\in B_{c_s}\setminus B_{c_s^{-1}}$ and $r \in (0,\infty)$,
$$
s(r)\int_{\bR^d}(1-\cos(y\cdot\xi))\nu_{c,c_s,N_{\nu}}( r\, dy)
\geq N |\xi|^2,
$$
where $\nu_{c,c_s,N_{\nu}}(dy) =  1_{|y| \leq  c \cdot c_s \cdot N_\nu}(y)\nu(dy)$.
\end{lem}
\begin{proof}
Applying  \eqref{eqn 20220718 01} and \eqref{eqn 20220718 20}, we have
\begin{align*}
s(r c |\xi|^{-1})  \int_{\bR^d}(1-\cos(y\cdot\xi))\nu_{c,c_s,N_{\nu}} (r\, dy)
&=s(r c |\xi|^{-1})  \int_{ |y| \leq   c \cdot c_s \cdot N_\nu  }(1-\cos(y\cdot\xi))\nu(r\, dy) \\
 &= s(r c |\xi|^{-1})\int_{  c|\xi|^{-1} |y| \leq  c \cdot c_s \cdot N_\nu  }\left(1-\cos\left(y\cdot \frac{c \xi}{|\xi|} \right)\right)\nu (r c|\xi|^{-1}\, dy) \\
 & \geq s(r c|\xi|^{-1}) \int_{|y| \leq   N_\nu }\left(1-\cos\left(y\cdot \frac{c \xi}{ |\xi|} \right) \right)\nu (r c|\xi|^{-1}\, dy) \\
 &\geq N .
\end{align*}
Moreover, by the definition of the scaling function (Definition \ref{20.05.28.13.20}), there exists a positive constant $N$ such that
\begin{align*}
N^{-1} \leq \frac{s(r)}{s(r c |\xi|^{-1})}   \leq N. 
\end{align*}
Therefore, the lemma is proved. 
\end{proof}

\begin{lem}
						\label{smoothness}
Let $\mu\in\frL_d$ and $R>0$.
\begin{enumerate}[(i)]
\item
The function
$$
h_R(\xi):=\int_{|y|\leq R}(e^{iy\cdot\xi}-1-iy\cdot\xi 1_{|y|\leq 1})\mu(dy)
$$
is infinitely differentiable for all $\xi \in \bR^d$. 
Furthermore, for any multi-index $\alpha$,
$$
|D^{\alpha}h_R(\xi)|\leq N(1_{|\alpha|=1}(1+|\xi|)+1_{|\alpha|\geq2}),
$$
where $N=N(L(\mu),R)$.
\item
The function
$$
g_R(\xi):=\int_{|y|>R}(e^{iy\cdot\xi}-1-iy\cdot\xi 1_{|y|\leq 1})\mu(dy)
$$
is uniformly continuous and bounded for all $\xi \in \bR^d$.
\end{enumerate}
\end{lem}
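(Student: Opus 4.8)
The plan is to exploit the fact that $h_R$ integrates only over the bounded region $\{|y|\le R\}$, on which the Lévy measure $\mu$ has finite second moment by definition (since $\int_{|y|\le R}|y|^2\,\mu(dy)\le R^2\vee1\cdot L(\mu)<\infty$), while $g_R$ integrates over $\{|y|>R\}$ where $\mu$ has finite total mass.

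For part (i), I would first observe that the integrand $e^{iy\cdot\xi}-1-iy\cdot\xi 1_{|y|\le1}$ is entire in $\xi$ for each fixed $y$, with $\xi$-derivatives $D^\alpha$ producing factors of the form $(iy)^\alpha e^{iy\cdot\xi}$ (for $|\alpha|\ge2$) or $(iy)^\alpha(e^{iy\cdot\xi}-1_{|y|\le1})$ (for $|\alpha|=1$). To justify differentiating under the integral sign, I would invoke the standard dominated-convergence criterion: it suffices to bound the $\xi$-derivatives of the integrand uniformly (locally in $\xi$) by a $\mu$-integrable function on $\{|y|\le R\}$. For $|\alpha|=1$ the elementary estimate $|e^{iy\cdot\xi}-1|\le|y\cdot\xi|$ on $\{|y|\le1\}$ and $|e^{iy\cdot\xi}-0|\le1$ on $\{1<|y|\le R\}$ gives a bound of the form $|y|^2(1+|\xi|)\mathbf{1}_{|y|\le1}+|y|\mathbf{1}_{1<|y|\le R}$, both $\mu$-integrable over $\{|y|\le R\}$; for $|\alpha|\ge2$ the integrand derivative is bounded by $|y|^{|\alpha|}\le (R^{|\alpha|-2}\vee1)|y|^2$, again $\mu$-integrable. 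Iterating this argument shows $h_R\in C^\infty$. The pointwise bound on $|D^\alpha h_R(\xi)|$ then follows by taking absolute values inside the integral and using the same estimates: for $|\alpha|=1$ one gets the $(1+|\xi|)$ factor from the region $|y|\le1$ together with the $|y|\le R$ contribution, and for $|\alpha|\ge2$ one gets a constant depending only on $R$ and $L(\mu)$. The constant $N$ is then explicitly $N(L(\mu),R)$ after collecting the finite integrals $\int_{|y|\le R}(1\wedge|y|^2)\,\mu(dy)\le L(\mu)$ and the volume/moment factors involving $R$.

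For part (ii), since $\{|y|>R\}$ carries finite $\mu$-mass — indeed $\mu(\{|y|>R\})\le (1\wedge R^2)^{-1}L(\mu)<\infty$ — and on this set $|e^{iy\cdot\xi}-1-iy\cdot\xi 1_{|y|\le1}|\le 2+|y|1_{R<|y|\le1}$, which is $\mu$-integrable over $\{|y|>R\}$ (the $|y|1_{R<|y|\le1}$ term is bounded by $R^{-1}|y|^2$), boundedness of $g_R$ is immediate. Uniform continuity follows from the dominated convergence theorem applied to $g_R(\xi+\eta)-g_R(\xi)$: the integrand difference is bounded pointwise by $\min\{2,|y||\eta|\}+|y\cdot\eta|1_{R<|y|\le1}$, which tends to $0$ as $|\eta|\to0$ for each $y$, is dominated by the same integrable envelope, and the resulting bound on $|g_R(\xi+\eta)-g_R(\xi)|$ is independent of $\xi$ (splitting the $y$-integral at $|y|=1/|\eta|$ and using $|y||\eta|$ on the small part, $2$ on the large part, gives a modulus of continuity depending only on $|\eta|$, $R$, and $L(\mu)$).

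I do not anticipate a genuine obstacle here; the only point requiring a little care is the bookkeeping for the $|\alpha|=1$ case in part (i), where the missing compensator term $iy\cdot\xi 1_{|y|\le1}$ means the derivative does not decay but instead grows linearly in $|\xi|$ — this is precisely the source of the $(1+|\xi|)$ factor in the stated bound, and it must be tracked separately from the $|\alpha|\ge2$ case where the compensator derivative vanishes and one gets a clean $\xi$-independent bound.
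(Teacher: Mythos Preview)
Your plan for part (i) and for the uniform continuity in part (ii) is correct and essentially identical to the paper's proof; the paper packages the computation via the auxiliary function $f(\lambda)=e^{i\lambda}-1-i\lambda$ and the fundamental theorem of calculus, but the substance is the same dominated-convergence argument with the same bounds.

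There is, however, a slip in your boundedness argument for $g_R$. You write
\[
|e^{iy\cdot\xi}-1-iy\cdot\xi\,1_{|y|\le1}|\le 2+|y|\,1_{R<|y|\le1},
\]
but the compensator $iy\cdot\xi$ contributes $|y\cdot\xi|$, not $|y|$; the correct envelope on $\{R<|y|\le1\}$ is $2+|y||\xi|$ (or $\tfrac12|y|^2|\xi|^2$), neither of which is bounded in $\xi$. In fact the boundedness assertion of the lemma is false as stated when $R<1$ and $\mu$ is asymmetric: for $d=1$ and $\mu=\delta_{y_0}$ with $R<y_0<1$, one has $g_R(\xi)=e^{iy_0\xi}-1-iy_0\xi$, whose imaginary part $\sin(y_0\xi)-y_0\xi$ is unbounded. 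The paper's own proof of (ii) in fact establishes only uniform continuity and is silent on boundedness. This does not affect the paper's application in Theorem~\ref{density estimate 1}, because there the relevant measure $\nu$ is supported in $\{|y|\le c_s^{-2}\}$, so $g_R\equiv0$; and if $R\ge1$ the indicator $1_{|y|\le1}$ vanishes on $\{|y|>R\}$ and boundedness is immediate from $|e^{iy\cdot\xi}-1|\le2$. So you should either restrict the boundedness claim to $R\ge1$, or add the hypothesis $\mu(\{R<|y|\le1\})=0$.
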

\begin{proof}
($i$) Denote
$$
f(\lambda):=e^{i\lambda}-1-i\lambda.
$$
Then, obviously, $f$ is infinitely differentiable.
Thus, by the fundamental theorem of calculus,
\begin{equation}
\label{ftc}
f(y\cdot(\xi_1+\xi_2))-f(y\cdot\xi_1)=\int_0^1f'(y\cdot(\xi_1+\theta\xi_2))(y\cdot\xi_2)d\theta
\end{equation}
for all $\xi_1,\xi_2,y \in \bR^d$. 
Moreover, by Taylor's theorem,
\begin{equation}
\label{bound of f}
 |D^n f(\lambda)|\leq\frac{\lambda^2}{2}1_{n=0}+\lambda1_{n=1}+1_{n\geq2}
\end{equation}
for all $\lambda \in \bR$ and nonnegative integer $n$.
It is obvious that
$$
h_R(\xi)=\int_{|y|\leq R}f(y\cdot\xi)\mu(dy)+ 1_{R>1} i\int_{1<|y|\leq R}(y\cdot\xi)\mu(dy)
$$
and the function $1_{R>1} i\int_{1<|y|\leq R}(y\cdot\xi)\mu(dy)$ is infinitely differentiable for all $\xi$.
Thus it only remains to prove that the mapping
$$
\int_{|y|\leq R}f(y\cdot\xi)\mu(dy)
$$
is infinitely differentiable for all $\xi$. By \eqref{ftc} and \eqref{bound of f},
$$
\left|\frac{f(y\cdot(\xi+\varepsilon e_j))-f(y\cdot\xi)}{\varepsilon}\right|\leq|y|\int_{0}^1|Df(y\cdot(\xi+\theta \varepsilon e_j))|d\theta\leq(|\xi|+|\varepsilon|)|y|^2
$$
for all $\varepsilon > 0$ and $\xi,y \in \bR^d$.
Since $\mu$ is a L\'evy measure on $\bR^d$,  we  have
$$
\int_{|y|\leq R}|y|^2\mu(dy)<\infty.
$$
Thus the Lebesgue dominated convergence theorem yields
$$
|D_{\xi^j}h_R(\xi)|\leq |\xi|\int_{|y|\leq R}|y|^2\mu(dy)+\int_{1<|y|\leq R}|y|\mu(dy)1_{R>1}\leq N(L(\mu),R)(1+|\xi|).
$$
Repeating the above argument for all $n\geq2$, the assertion ($i$) is proved.

($ii$) Note that for all $\xi_2 \in \bR^d$
\begin{equation*}
\sup_{\xi_1\in\bR^d}|g_R(\xi_1+\xi_2)-g_R(\xi_1)|\leq\int_{|y|> R}\left(|e^{iy\cdot\xi_2}-1|+|\xi_2||y| 1_{|y|\leq 1} \right)\mu(dy).
\end{equation*}
Therefore, by the Lebesgue dominated convergence theorem, we have
$$
\sup_{\xi_1\in\bR^d}|g_R(\xi_1+\xi_2)-g_R(\xi_1)| \to 0 \quad \text{as} \quad \xi_2 \to 0.
$$
The lemma is proved.
\end{proof}

\begin{lem}
\label{smoothness-2}
Let $s$ be a scaling function with $R_0=\infty$ and $\nu$ be a L\'evy measure on $\bR^d$. 
Assume that 
\begin{align}
\sup_{r \in (0,\infty)}\left[ s(r)\int_{\bR^d}(1\wedge|r^{-1}y|^2)\nu(dy) \right]
= \sup_{r \in (0,\infty)} \left[ s(r)\int_{\bR^d}(1\wedge|y|^2)\nu(r\,dy) \right] < \infty.
\end{align}
Then for all $r,R \in (0,\infty)$,
\begin{enumerate}[(i)]
\item
The function
$$
h_{r,R}(\xi):= s(r)\int_{|y|\leq R}(e^{iy\cdot\xi}-1-iy\cdot\xi 1_{|y|\leq 1})\nu(r\, dy)
$$
is infinitely differentiable for all $\xi \in \bR^d$. 
Furthermore, for any multi-index $\alpha$,
$$
|D^{\alpha}h_{r,R}(\xi)|\leq N(1_{|\alpha|=1}(1+|\xi|)+1_{|\alpha|\geq2}),
$$
where 
$$
L(r,\nu):= s(r)\int_{\bR^d}(1\wedge|y|^2)\nu(r\,dy) 
$$
and $N=N(L(r,\nu),R)$, i.e. $N$ is independent of $r$.
\item
The function
$$
g_{r,R}(\xi):= s(r)\int_{|y|>R}(e^{iy\cdot\xi}-1-iy\cdot\xi 1_{|y|\leq 1})\nu(r\, dy)
$$
is uniformly continuous and bounded for all $\xi \in \bR^d$ uniformly for all $r \in (0,\infty)$, i.e.
$$
\sup_{r \in (0,\infty), \xi_1\in\bR^d}|g_{r,R}(\xi_1+\xi_2)-g_{r,R}(\xi_1)| \to 0 \quad \text{as} \quad \xi_2 \to 0.
$$
\end{enumerate}
\end{lem}
\begin{proof}
Keep in mind that $\left[ s(r)\int_{\bR^d}(1\wedge|y|^2)\nu(r\,dy) \right]$ is uniformly bounded for all $ r \in (0,\infty)$ 
by the assumption. 
Then the proof is almost identical to that of Lemma \ref{smoothness}.
\end{proof}

\vspace{3mm}
\textbf{Proof of Propositions \ref{22.07.07.16.05} and \ref{22.07.07.16.05-2}}\hspace{3mm} 
Let $\varphi \in \cS(\bR^d)$ so that $\cF[\varphi] \in C_c^\infty(B_{c_s} \setminus B_{c_s^{-1}} )$.
First, we prove Proposition \ref{22.07.07.16.05}.
For $c_1 \in [1,\infty)$, 
we put
$$
\nu_{N_\nu}(dy) := 1_{|y|\leq N_\nu} \nu(c_1\,dy),
$$
where $\nu\in\frL_d$ is taken from \eqref{20.08.11.14.28} and $c_1$ will be chosen later.
Due to the assumption in Proposition \ref{22.07.07.16.05} and Remark \ref{22.07.25.11.42}, for all $r\in(0,\infty)$ and $t\geq0$,
$$
\nu_{r,t}:=\Lambda_t^r-s(r)^{-1}\nu_{N_\nu}
$$
is a L\'evy measure on $\bR^d$.
By Proposition \ref{20.05.03.19.17}, there exist an additive process  $Y^r$ (resp. $Y^1$)  with the bounded triplet $(a^r_{\Lambda}(t),0,\nu_{r,t})_{t\geq0}$ (resp. $(a(t),0,\nu_{1,t})_{t\geq0}$). Let $\eta_t^r$ (resp. $\eta_t^1$) be the probability distribution of $Y_t^r$ (resp. $Y_t^1$). 
Observe that for all $r\in(0,\infty)$, $h\in[0,\infty)$, and $\xi\in\bR$,
\begin{align*}
    s(r)^{-1}\int_{\bR^d}(e^{iy\cdot\xi}-1-iy\cdot\xi1_{|y|\leq1})\nu_{N_\nu}(dy)&=:s(r)^{-1}\psi^{\nu_{N_\nu}}(\xi)\\
    &=\Psi_{Z^r}(h,\xi)-\Psi_{Y^{r}}(h,\xi)\\
    &=\Psi_Z(h,r^{-1}\xi)-\Psi_{Y^{r}}(h,\xi).
\end{align*}
Applying Fubini's theorem and \eqref{22.07.09.17.25}, for $r\in(0,1)$, we have
\begin{equation*}
\begin{aligned}
&\left|\cF^{-1}\left[e^{\int_{0}^{t}\Psi_{Z}(h,r^{-1}\cdot)dh}\cF[\varphi]\right](x)\right|\\
&=(2\pi)^{-d/2}\left|\int_{\bR^d}e^{ix\cdot\xi}\exp\left(\int_{0}^{t}\Psi_{Z^r}(h,\xi)dh\right)\cF[\varphi](\xi)d\xi\right|\\
&=(2\pi)^{-d/2}\left|\int_{\bR^d}e^{ix\cdot\xi}e^{s(r)^{-1}t\psi^{\nu_{N_\nu}}(\xi)}\cF[\varphi](\xi)\bE[e^{iY_{t}^r\cdot\xi}]d\xi\right|\\
&=(2\pi)^{-d/2}\left|\int_{\bR^d}e^{ix\cdot\xi}e^{s(r)^{-1}t\psi^{\nu_{N_\nu}}(\xi)}\cF[\varphi](\xi)\int_{\bR^d}e^{iy\cdot\xi}\eta_{t}^r(dy)d\xi\right|\\
&\leq\int_{\bR^d}|\cF^{-1}[e^{s(r)^{-1}t\psi^{\nu_{N_\nu}}}\cF[\varphi]](x+y)|\eta_{t}^r(dy).
\end{aligned}
\end{equation*}
By virtue of Minkowski's inequality, it suffices to prove that there exist positive constants $C_1$ and $C_2$ such that
\begin{equation}
			\label{20.07.16.13.42-2}
\begin{gathered}
\int_{\bR^d}\left|\cF^{-1}\left[e^{t\psi^{\nu_{N_\nu}}}\cF[\varphi]\right](x)\right|dx\leq C_1e^{-C_2t}.
\end{gathered}
\end{equation}
If we put $\nu_{c_1,N_\nu}(dy) :=  1_{|y| \leq 1} \nu( N_\nu  c_1\, dy)$, then $\nu_{N_\nu}( N_\nu \,dy)= \nu_{c_1,N_\nu}(  dy)$.
Thus by Proposition \ref{20.05.03.19.17}, we have 
$$
\psi^{\nu_{N_\nu}}(  \xi) =  \psi^{\nu_{c_1,N_\nu}}(N_\nu  \xi)-id_{\nu}\cdot\xi,
$$
where
$$
d_{\nu}:=1_{N_{\nu}>1}N_{\nu}\int_{1<|y|\leq N_{\nu}}y\,\nu(N_{\nu}c_1\,dy).
$$
Therefore instead of \eqref{20.07.16.13.42-2}, it is sufficient to show 
\begin{equation}
			\label{20.07.16.13.42}
\begin{gathered}
\int_{\bR^d}\left|\cF^{-1}\left[e^{t\psi^{\nu_{c_1,N_\nu}}(N_\nu \cdot )}\cF[\varphi] \right](x)\right|dx\leq C_1e^{-C_2t}.
\end{gathered}
\end{equation}
Moreover, $|N_\nu \xi|  \in [N_\nu  c_s^{-1} , N_\nu c_s]$ for all $\xi\in B_{c_s}\setminus B_{c_s^{-1}}$.
Take $c_1 \geq1$ so that $
c_1\in(N_{\nu}c_s,\infty)$. Then by Lemmas \ref{polynomial} and \ref{smoothness}, for any multi-index $\alpha$ with respect to the space variable, we have
\begin{equation*}
\begin{gathered}
|D^{\alpha}(e^{t\psi^{\nu_{c_1,N_\nu}}(N_{\nu}\xi)}\cF[\varphi]( \xi))|^2\leq A_1(1+t^{2|\alpha|})e^{-A_2t|\xi|^2}1_{c_s^{-1}\leq|\xi|\leq c_s},\\
\end{gathered}
\end{equation*}
where $A_1$ and $A_2$ are  positive constants depending only on $d$, $\alpha$, $c_s$, $L(\nu)$, $c_1$, and $N_\nu$.
Recalling $d_0=\lfloor d/4 \rfloor+1$ and applying H\"older's inequality and Plancherel's theorem, we have
\begin{equation*}
\begin{aligned}
&\left(\int_{\bR^d}|\cF^{-1}[e^{t\psi^{\nu_{c_1,N_\nu}}(N_{\nu}\cdot)}\cF[\varphi]](x)|dx\right)^2\\
	&\leq \left(\int_{\bR^d}\frac{1}{(1+|x|^2)^{2d_0}}dx\right)\int_{\bR^d}(1+|x|^2)^{2d_0}|\cF^{-1}[e^{t\psi^{\nu_{c_1,N_\nu}}(N_{\nu}\cdot)}\cF[\varphi]](x)|^2dx\\
	&=N(d)\int_{c_s^{-1}\leq|\xi|\leq c_s} |(1-\Delta)^{d_0}(e^{t\psi^{\nu_{c_1,N_\nu}}(N_{\nu}\cdot)}\cF[\varphi])(\xi)|^2d\xi\\
	&\leq N(d,A_1)(1+t^{2d_0})\int_{c_s^{-1}\leq|\xi|\leq c_s} e^{-A_2t|\xi|^2}d\xi.
\end{aligned}
\end{equation*}
Since
\begin{equation*}
\begin{aligned}
    t^{2d_0}e^{-A_2t|\xi|^2}1_{c_s^{-1}\leq|\xi|^2\leq c_s}&\leq N(d)|\xi|^{-4d_0}e^{-A_2t|\xi|^2/2}1_{c_s^{-1}\leq|\xi|^2\leq c_s}\\
    &\leq N(d,c_s)e^{-A_2t|\xi|^2/2}1_{c_s^{-1}\leq|\xi|^2\leq c_s},
\end{aligned}
\end{equation*}
we finally obtain \eqref{20.07.16.13.42}.

For Proposition \ref{22.07.07.16.05-2}, using Proposition \ref{20.05.03.19.17}, we have
\begin{equation*}
    \cF^{-1}[\psi^{\mu}(r^{-1}\cdot)\cF[\varphi]](x)=a^r_{\mu}\cdot\nabla\varphi(x)+\int_{\bR^d}(\varphi(x+y)-\varphi(x)-y\cdot\nabla\varphi(x)1_{|y|\leq1})\mu^r(dy),
\end{equation*}
where
$$
a_{\mu}^r:=-\frac{1}{r}\int_{r<|y|\leq1}y\mu(dy).
$$
Since $\mu$ is symmetric, it is obvious that $a_{\mu}^r=0$.
Moreover following the argument in \eqref{20.04.10.16.45}, we have
$$
s(r)\int_{\bR^d}|\cF^{-1}[\psi^{\mu}(r^{-1}\cdot)\cF[\varphi]](x)|dx\leq 4s(r)L(\mu^r)\sum_{|\alpha|\leq2}\|D^{\alpha}\varphi\|_{L_1(\bR^d)}=4L(r;s,\mu)\sum_{|\alpha|\leq2}\|D^{\alpha}\varphi\|_{L_1(\bR^d)}.
$$
By \eqref{22.07.09.21.28}, Assumption \ref{22.06.26.18.47} holds. The Proposition is proved.

\vspace{3mm}
\textbf{Proof of Proposition \ref{22.07.07.16.05-3}} \hspace{3mm} 
The additional statement is obvious since
$$
s(r)\int_{\bR^d}(1\wedge|y|^2)\nu_{sym}(r\,dy)
=s(r)\int_{\bR^d}(1\wedge|y|^2)\mu (r\,dy).
$$
Moreover, the main proof is almost identical to that of Proposition \ref{22.07.07.16.05}.
We only remark that 
and Lemmas \ref{lem 20220715 01} and \ref{smoothness-2} are used instead of Lemmas \ref{polynomial} and \ref{smoothness}.

\subsection{Proof of Proposition \ref{properties}}
\label{20.06.01.15.11}
First, we provide the proof of Proposition \ref{properties}.
Recall the space $\tilde{C}^{\infty}(\bR^d)$ of functions whose $L_p$-norms of all derivatives are finite and 
we prove that the operator $(\lambda-\psi^{\mu}(D))^{\alpha/2}$ is bijective on $\tilde{C}^{\infty}(\bR^d)$ for all $\alpha\in\bR$ and $\lambda>0$.
\begin{lem}
\label{20.08.13.16.17}
Let $\mu$ be a symmetric L\'evy measure on $\bR^d$. For $\alpha\in\bR$ and $\lambda>0$, the operator
$$
(\lambda-\psi^{\mu}(D))^{\alpha/2}:\tilde{C}^{\infty}(\bR^d) \to \tilde{C}^{\infty}(\bR^d)
$$
is bijective. In particular, 
$$
\tilde{C}^{\infty}(\bR^d) \subset H_p^{\mu;\gamma}(\bR^d) 
$$
for all $p \in [1,\infty)$ and $\gamma \in \bR$. 
\end{lem}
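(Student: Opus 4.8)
The plan is to prove bijectivity of $(\lambda-\psi^{\tilde\mu}(D))^{\alpha/2}$ on $\tilde C^\infty(\bR^d)$ by exhibiting the inverse directly as $(\lambda-\psi^{\tilde\mu}(D))^{-\alpha/2}$, and then to verify that this operator maps $\tilde C^\infty(\bR^d)$ into itself; since the group property $(\lambda-\psi^{\tilde\mu})^{\alpha/2}(\lambda-\psi^{\tilde\mu})^{-\alpha/2}=1$ holds at the level of symbols (here $\lambda>0$ guarantees $\lambda-\psi^{\tilde\mu}(\xi)\geq\lambda>0$ since $\psi^{\tilde\mu}(\xi)=\int(\cos(y\cdot\xi)-1)\tilde\mu(dy)\leq0$ is real-valued and nonpositive), bijectivity reduces to the mapping property. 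So the single real task is: for $f\in\tilde C^\infty(\bR^d)$ and any $\beta\in\bR$, show $(\lambda-\psi^{\tilde\mu}(D))^{\beta/2}f\in\tilde C^\infty(\bR^d)$, i.e. $D^\alpha_x(\lambda-\psi^{\tilde\mu}(D))^{\beta/2}f\in L_p(\bR^d)$ for every multi-index $\alpha$ and every $p\in[1,\infty)$.

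First I would record the structural facts about the symbol. Write $\psi^{\tilde\mu}=h_R+g_R$ as in Lemma \ref{smoothness} with, say, $R=1$: then $h_1$ is smooth with $|D^\alpha h_1(\xi)|\leq N(1_{|\alpha|=1}(1+|\xi|)+1_{|\alpha|\geq2})$, while $g_1$ is bounded and uniformly continuous. Consequently $m(\xi):=(\lambda-\psi^{\tilde\mu}(\xi))^{\beta/2}$ is bounded (for $\beta\leq0$) or at most polynomially growing like $(1+|\xi|^2)^{|\beta|/2}$ (for $\beta>0$, using $|\psi^{\tilde\mu}(\xi)|\leq N(1+|\xi|^2)$ from Taylor's theorem and the $L(\tilde\mu)<\infty$ bound as in \eqref{20.04.10.16.45}), but $m$ need not be smooth because $g_1$ is only continuous. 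This lack of smoothness of the symbol is exactly the obstacle the whole paper is built to avoid, and it reappears here: one cannot invoke a Mikhlin-type multiplier theorem. The way around it is the same trick used in Theorem \ref{density estimate 1}: do not differentiate the multiplier, but instead move derivatives onto $f$ and estimate in $L_1$ via the $L_2$-theory. Concretely, for a target $L_p$-bound I would use that $f\in\tilde C^\infty$ means $(1-\Delta)^{N_0}D^\alpha_x f\in L_2(\bR^d)$ for every $N_0$; then for $d_0=\lfloor d/4\rfloor+1$,
$$
\|D^\alpha_x(\lambda-\psi^{\tilde\mu}(D))^{\beta/2}f\|_{L_p(\bR^d)}\leq N\big\|\,(1+|\cdot|^2)^{-2d_0}\,\big\|^{1/2}\cdot\big\|(1+|\cdot|^2)^{2d_0}\,\xi^\alpha\,m(\xi)\,\cF[f](\xi)\big\|_{L_2},
$$
wait — more carefully, I would bound $\|g\|_{L_1}\leq N\|(1+|\cdot|^2)^{d_0}g\|_{L_2}$ and $\|g\|_{L_\infty}\leq N\|g\|_{L_1}$ on the Fourier side, so that an $L_1\cap L_\infty$, hence every $L_p$, estimate on $D^\alpha_x(\lambda-\psi^{\tilde\mu}(D))^{\beta/2}f$ follows once $(1+|\cdot|^2)^{d_0}\,\xi^\alpha\,m\,\cF[f]\in L_2$. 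The last membership is clear: $|m(\xi)|\leq N(1+|\xi|^2)^{|\beta|/2}$ is polynomially bounded, so it is absorbed by choosing $N_0$ large in $(1-\Delta)^{N_0}D^\alpha_x f\in L_2$, which is available since $f\in\tilde C^\infty(\bR^d)$.

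Having shown $(\lambda-\psi^{\tilde\mu}(D))^{\beta/2}:\tilde C^\infty(\bR^d)\to\tilde C^\infty(\bR^d)$ for every $\beta\in\bR$, I would conclude: applying this with $\beta=\alpha$ and with $\beta=-\alpha$ gives two maps of $\tilde C^\infty(\bR^d)$ into itself whose composition (in either order) is the identity — the composition identity being checked on the Fourier transform side, where both symbols are genuine functions and $(\lambda-\psi^{\tilde\mu}(\xi))^{\alpha/2}(\lambda-\psi^{\tilde\mu}(\xi))^{-\alpha/2}=1$ pointwise, and where the Fourier inversion theorem applies to elements of $\tilde C^\infty(\bR^d)$ by Lemma \ref{20.03.30.13.32}. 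Hence $(\lambda-\psi^{\tilde\mu}(D))^{\alpha/2}$ is bijective on $\tilde C^\infty(\bR^d)$. Finally, for the ``in particular'' claim, take $\gamma\in\bR$ and $f\in\tilde C^\infty(\bR^d)$; then $(1-\psi^{\tilde\mu}(D))^{\gamma/2}f\in\tilde C^\infty(\bR^d)\subset L_p(\bR^d)$ for every $p\in[1,\infty)$ by the case $\lambda=1$, $\beta=\gamma$ just established, which is precisely the statement $f\in H_p^{\mu;\gamma}(\bR^d)$ for all $p\in[1,\infty)$ and $\gamma\in\bR$. The main obstacle, to restate it, is the non-smoothness of the multiplier symbol $(\lambda-\psi^{\tilde\mu}(\xi))^{\beta/2}$, handled not by a multiplier theorem but by the Plancherel/Hölder device of trading derivatives onto $f$ and using $\tilde C^\infty$-regularity to absorb the polynomial growth of the symbol.
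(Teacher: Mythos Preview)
There is a genuine gap in your $L_1$ estimate. You write that $\|g\|_{L_1}\leq N\|(1+|\cdot|^2)^{d_0}g\|_{L_2}$ and then claim this follows once $(1+|\xi|^2)^{d_0}\,\xi^\alpha\,m\,\cF[f]\in L_2$. But these are two different weighted $L_2$ conditions: the first lives in $x$-space and, via Plancherel, is equivalent to $(1-\Delta_\xi)^{d_0}\cF[g]\in L_2$, which requires $\xi$-derivatives of $m(\xi)=(\lambda-\psi^{\tilde\mu}(\xi))^{\beta/2}$; the second lives in $\xi$-space and only yields $\cF[g]\in L_1$, hence $g\in L_\infty$. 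Since $\psi^{\tilde\mu}(\xi)=\int(\cos(y\cdot\xi)-1)\tilde\mu(dy)$ is in general not differentiable in $\xi$ (differentiating brings down factors of $|y|$, and $\int_{|y|>1}|y|\tilde\mu(dy)$ need not be finite), the first condition is unavailable --- this is precisely the non-smoothness obstacle you yourself flagged. Your argument therefore produces $g\in L_2\cap L_\infty$ but not $g\in L_p$ for $p\in[1,2)$, so the image is not shown to lie in $\tilde C^\infty(\bR^d)$. (There is a second, independent obstruction to the weighted-$L_2$-in-$x$ route: it would also require $\xi$-derivatives of $\cF[f]$, i.e.\ moments $x^\beta f\in L_2$, which are not guaranteed for $f\in\tilde C^\infty(\bR^d)$ --- take $f(x)=(1+|x|^2)^{-d}$.)

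The paper avoids Fourier analysis for the $L_p$ bounds and instead uses integral/probabilistic representations. For $\alpha=2$ it writes $(\lambda-\psi^{\tilde\mu}(D))f$ via the nonlocal integral formula \eqref{2020082620} and bounds each $L_p$-norm by Taylor plus Minkowski; for the inverse it uses the resolvent formula $(\lambda-\psi^{\tilde\mu}(D))^{-1}f(x)=\int_0^\infty e^{-\lambda t}\bE[f(x+Z_t)]\,dt$ with $Z$ the L\'evy process of symbol $\psi^{\tilde\mu}$, from which $\|D^\alpha(\lambda-\psi^{\tilde\mu}(D))^{-1}f\|_{L_p}\leq\lambda^{-1}\|D^\alpha f\|_{L_p}$ follows by Minkowski for every $p\in[1,\infty)$. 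For $\alpha\in(0,2)$ it invokes the Bernstein representation $\lambda^{\alpha/2}=N(\alpha)\int_0^\infty(1-e^{-\lambda t})t^{-1-\alpha/2}dt$ to write $(\lambda-\psi^{\tilde\mu}(D))^{\alpha/2}f$ as a $t$-integral of $e^{-\lambda t}\bE[(\lambda-\psi^{\tilde\mu}(D))f(\cdot+Z_t)]$, reducing to the case $\alpha=2$. These representations furnish $L_p$ bounds for all $p\in[1,\infty)$ simultaneously, with no regularity demanded of the symbol.
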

\begin{proof}
For each $\alpha \in \bR$, there exists an integer $n$ such that
$$
\alpha \in [2n, 2n+2).
$$
Due to  basis properties of the Fourier transform and inverse transform, it is obvious that
\begin{align*}
(\lambda-\psi^{\mu}(D))^{\alpha/2}
&=(\lambda-\psi^{\mu}(D))^{(\alpha-2n)/2}(\lambda-\psi^{\mu}(D))^{n}\\
&=(\lambda-\psi^{\mu}(D))^{(\alpha-2n)/2}(\lambda-\psi^{\mu}(D)) \cdots (\lambda-\psi^{\mu}(D)).
\end{align*}
Thus it suffices to prove the lemma for $\alpha\in(0,2]$. We consider the simplest case $\alpha=2$ first.

\vspace{2mm}
\textbf{(Step 1).} Assume $\alpha=2$ and let $f \in \tilde{C}^{\infty}(\bR^d)$.
By Lemma \ref{20.03.30.13.32} and elementary properties of the Fourier transform, for $x\in\bR^d$
\begin{align}
							\label{2020082620}
\psi^{\mu}(D)f(x)=\int_{\bR^d}(f(x+y)-f(x)-y\cdot\nabla f(x)1_{|y|\leq1})\mu(dy).
\end{align}
The Lebesgue dominated convergence theorem easily implies that for any multi-index $\alpha$, 
$$
D^{\alpha}\psi^{\mu}(D)f(x)=\psi^{\mu}(D)D^{\alpha}f(x).
$$
Thus, applying Taylor's theorem and the Lebesgue dominated convergence theorem again, we have
\begin{align}
							\label{2020082610}
\|D^\alpha \psi^{\mu}(D)f\|_{L_p(\bR^d)}\leq 4L(\mu)\sum_{|\beta|\leq |\alpha|+2}\|D^{\beta} f\|_{L_p(\bR^d)},\quad \forall p\in[1,\infty].
\end{align}
In other words, we conclude that
\begin{align}
								\label{2020082521}
\left(\lambda - \psi^{\mu}(D)\right)f \in\tilde{C}^{\infty}(\bR^d).
\end{align}
By \cite[Theorem 9.8]{sato1999levy}, there exists a L\'evy process $Z$ with triplet $(0,0,\mu)$ such that
\begin{equation}
							\label{2020082530}
\begin{gathered}
\bE[e^{i\xi\cdot Z_t}]=e^{t\psi^{\tilde \mu}(\xi)} \quad \forall t \in [0,\infty),~ \xi \in \bR^d,
\end{gathered}
\end{equation}
By Fubini's theorem and Lemma \ref{20.03.30.13.32},
\begin{align}
							\label{2020082520}
(\lambda-\psi^{\mu}(D))^{-1}f(x)=\int_{0}^{\infty}e^{-\lambda t}\bE[f(x+Z_t)]dt.
\end{align}
Indeed,
\begin{align*}
\int_{0}^{\infty}e^{-\lambda t}\bE[f(x+Z_t)]dt
&=\cF^{-1} \left[\int_{0}^{\infty}e^{-\lambda t}\bE\left[ \cF[f](\xi) e^{i \xi \cdot Z_t} \right] dt \right](x) \\
&=\cF^{-1} \left[\int_{0}^{\infty}e^{-\lambda t} e^{ t \psi^{\mu}(\xi)} dt  \cF[f](\xi) \right] (x)\\
&=\cF^{-1} \left[\left(\lambda -\psi^{\mu}(\xi)\right) \cF[f](\xi) \right](x).
\end{align*}
The right-hand side of \eqref{2020082520} implies that 
\begin{align}
						\label{2020082522}
(\lambda-\psi^{\mu}(D))^{-1}f(x) \in  \tilde{C}^{\infty}(\bR^d).
\end{align}
Finally combining \eqref{2020082521} and \eqref{2020082522}, we complete the proof of the lemma for the case $\alpha=2$.

(\textbf{Step 2}). Assume $\alpha\in(0,2)$, let $f \in \tilde{C}^{\infty}(\bR^d)$, and denote $\phi(\lambda):=\lambda^{\alpha/2}$.
Then $\phi$  is a Bernstein function  and has the  representation (cf. \cite{schbern})
$$
\phi(\lambda)=N(\alpha)\int_0^{\infty}(1-e^{\lambda t})t^{-1-\frac{\alpha}{2}}dt.
$$
Due to this representation and Fubini's theorem, 
\begin{equation*}
\begin{aligned}
(\lambda-\psi^{\mu}(D))^{\alpha/2}f(x)&=
    \cF^{-1}[(\lambda-\psi^{\mu})^{\alpha/2}\cF[f]](x)=\cF^{-1}[\phi(\lambda-\psi^{\mu})\cF[f]](x)\\
    &=N(\alpha)\cF^{-1}\left[\int_{0}^{\infty}(1-e^{-\lambda t}e^{t\psi^{\mu}})\cF[f]t^{-1-\frac{\alpha}{2}}dt\right](x)\\
    &=N(\alpha)\int_0^{\infty}(f(x)-e^{-\lambda t}\bE[f(x+Z_t)])t^{-1-\frac{\alpha}{2}}dt,
\end{aligned}
\end{equation*}
where $Z$ is a L\'evy process satisfying \eqref{2020082530}. 
Note that
\begin{align*}
\frac{d}{dr} e^{-\lambda r}\bE[f(x+Z_r)]
=-\lambda e^{-\lambda r}\bE[f(x+Z_r)]+  e^{-\lambda r}\psi^{\mu}(D)\bE[f(x+Z_r)]
\end{align*}
Thus by the fundamental theorem of calculus and Fubini's theorem,
\begin{align}
								\label{2020082621}
(\lambda-\psi^{\mu}(D))^{\alpha/2}f(x)=N(\alpha)\int_{0}^{\infty}t^{-1-\frac{\alpha}{2}}\int_0^te^{-\lambda r}\bE[(\lambda-\psi^{\mu}(D)) f(x+Z_r)]drdt.
\end{align}
Since $(\lambda-\psi^{\mu}(D))f\in\tilde{C}^{\infty}(\bR^d)$ by Step 1, we conclude that 
$$
(\lambda-\psi^{\mu}(D))^{\alpha/2}f\in\tilde{C}^{\infty}(\bR^d).
$$

Finally, using the above fact and Step 1 again, we have
$$
(\lambda-\psi^{\mu}(D))^{-\alpha/2}f=(\lambda-\psi^{\mu}(D))^{\frac{2-\alpha}{2}}(\lambda-\psi^{\mu}(D))^{-1}f\in\tilde{C}^{\infty}(\bR^d).
$$
The lemma is proved.
\end{proof}

\textbf{Proof of Proposition \ref{properties}}
($i$) Recall \eqref{sch conver} and thus if $\phi_n\to\phi$ in $\cS(\bR^d)$, then for any multi-index $\alpha$, $\|D^{\alpha}\phi_n-D^{\alpha}\phi\|_{L_p(\bR^d)}$ converges to 0.
Following \eqref{2020082610}, we have
$$
\|\phi_n-\phi\|_{H_p^{\mu;\gamma}(\bR^d)}\leq N\sum_{|\alpha|\leq |\gamma|+2}\|D^{\alpha}\phi_n-D^{\alpha}\phi\|_{L_p(\bR^d)}
$$
and obtain the result.

\vspace{3mm}

($ii$) It suffices to prove that $(1-\psi^{\mu}(D))^{\gamma/2}f$ is a tempered distribution. For $\phi\in\cS(\bR^d)$,
$$
|(1-\psi^{\mu}(D))^{\gamma/2}f,\phi)|\leq \|f\|_{L_p(\bR^d)}\|\phi\|_{H_p^{\mu;\gamma}(\bR^d)}.
$$
By ($i$), $(1-\psi^{\mu}(D))^{\gamma/2}f$ is a tempered distribution.

\vspace{3mm}

($iii$) It suffices to prove the completeness. Let $\{f_n\}_{n=1}^{\infty}\subseteq H_p^{\mu;\gamma}(\bR^d)$ be a Cauchy sequence. 
Then by the completeness of the space $L_p(\bR^d)$, there exists a $g\in L_p(\bR^d)$ such that
$$
(1-\psi^{\mu}(D))^{\gamma/2}f_n\to g,\quad \text{in } L_p(\bR^d).
$$
Put $f:=(1-\psi^{\mu}(D))^{-\gamma/2}g$. Then by ($ii$),  $f\in H_p^{\mu;\gamma}(\bR^d)$. 
Therefore, $H_p^{\mu;\gamma}(\bR^d)$ is a Banach space.

\vspace{3mm}

($iv$) Recall that $C_c^{\infty}(\bR^d)$ is dense in $L_p(\bR^d)$ and
$$
C_c^{\infty}(\bR^d)\subseteq \tilde{C}^{\infty}(\bR^d) \subset H_p^{\mu;\gamma}(\bR^d).
$$
First, we show that $\tilde{C}^{\infty}(\bR^d)$ is dense in $H_p^{\mu;\gamma}(\bR^d)$.
For $f\in H_p^{\mu;\gamma}(\bR^d)$, there exists a sequence $g_n \in C_c^\infty(\bR^d)$ such that
$$
\|(1-\psi^{\mu}(D))^{\gamma/2}f-g_n\|_{L_p(\bR^d)}\to 0 ~\text{as}~ n \to \infty.
$$
By Lemma \ref{20.08.13.16.17}, $h_n:=(1-\psi^{\mu}(D))^{-\gamma/2}g_n\in\tilde{C}^{\infty}(\bR^d)$. 
Thus, it is sufficient to show that $C_c^\infty(\bR^d)$ is dense in $\tilde{C}^{\infty}(\bR^d)$ under the norm in $H_p^{\mu;\gamma}(\bR^d)$. 
Let $f \in \tilde{C}^{\infty}(\bR^d)$ and choose a nonnegative $\eta \in C_c^\infty(\bR^d)$ such that $\eta(0)=1$. 
Denote $\eta_n(x) = \eta (x/n)$. Then, due to Leibniz's product rule, it is easy to check that for any multi-index $\alpha$
\begin{align}
									\label{2020082622}
\|D^\alpha (\eta_n f) -D^\alpha f\|_{L_p(\bR^d)} \to 0 ~\text{as}~ n \to \infty.
\end{align}
Recalling nonlocal representations \eqref{2020082620} and \eqref{2020082621}, one can easily check that \eqref{2020082622} implies
$$
\eta_n f \to f \quad \in H_p^{\mu;\gamma}(\bR^d) \qquad \text{as}~n \to \infty.
$$
The remained statements are well-known. For instance, the proof of ($v$)-($ix$) can be found in \cite{farkas2001function} ; 
for ($v$), see Theorem 1.5.10 ;
for ($vi$), see Corollary 2.2.3 ;
for ($vii$), see Theorem 2.2.10 ; 
for ($viii$), see Theorem 2.3.1 ;
for ($ix$), see Theorem 2.4.6.
The proposition is proved.

\subsection{Proof of Proposition \ref{20.04.29.20.24}} In this subsection, we complete the proof of properties of the scaled Besov spaces. 
To apply a well-known theory in functional analysis, we recall an abstract definition related to quasi-Banach space-valued operators.
\begin{defn}
Let $A$ and $B$ be quasi-Banach spaces. We say that $B$ is retract of $A$ if there exist linear transformation $I:B\to A$ and $P:A\to B$ such that $PI$ is an identity operator in B.
\end{defn}
\begin{lem}
\label{20.06.28.16.49}
Let $p\in[1,\infty]$, $q\in(0,\infty]$ and $\gamma\in\bR$. Then the scaled Besov space $B_{p,q}^{s,\varphi;\gamma}(\bR^d)$ is a retract of $l_q^{s;\gamma}(L_p(\bR^d))$.
\end{lem}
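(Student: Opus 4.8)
The plan is to exhibit explicit maps $I$ and $P$ realizing the retraction, with $I$ the analysis operator already introduced just before the statement, $I(f):=(f\ast\varphi_0,f\ast\varphi_1,\dots)$. By the very definition of the two (quasi-)norms one has $\|I(f)\|_{l_q^{s;\gamma}(L_p(\bR^d))}=\|f\|_{B_{p,q}^{s,\varphi;\gamma}(\bR^d)}$, so $I$ is an isometric embedding of $B_{p,q}^{s,\varphi;\gamma}(\bR^d)$ into $l_q^{s;\gamma}(L_p(\bR^d))$; the whole content is thus to build a bounded left inverse $P$.

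For the synthesis operator I would use the almost orthogonality of Remark \ref{20.04.29.20.00} with $\zeta=\iota=\varphi$: setting
$$
\vartheta_0:=\varphi_0+\varphi_1,\qquad \vartheta_j:=\varphi_{j-1}+\varphi_j+\varphi_{j+1}\ \ (j\ge1),
$$
the identities \eqref{almost orthogonality} give $\cF[\vartheta_j]\,\cF[\varphi_j]=\cF[\varphi_j]$ for every $j\ge0$, while $\cF[\vartheta_j]$ is supported in an annulus $\{c_s^{j-2}\le|\xi|\le c_s^{j+2}\}$ for $j\ge1$ (resp. in $\{|\xi|\le c_s\}$ for $j=0$) and $\|\vartheta_j\|_{L_1(\bR^d)}$ is bounded uniformly in $j$ since $\|\varphi_j\|_{L_1}=\|\varphi\|_{L_1}$ for $j\ge1$ and $\varphi_0,\vartheta_0\in\cS(\bR^d)$. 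I then set $P\big((g_j)_{j\ge0}\big):=\sum_{j\ge0}g_j\ast\vartheta_j$. First I would check this series converges in $\cS'(\bR^d)$: pairing with $\phi\in\cS(\bR^d)$ gives $\sum_j\langle g_j,\tilde\vartheta_j\ast\phi\rangle$ with $\tilde\vartheta_j(x):=\vartheta_j(-x)$, and since $\tilde\vartheta_j\ast\phi$ is frequency-localized to $\{|\xi|\sim c_s^{\,j}\}$ its $L_{p'}$-norm decays faster than any power of $c_s^{-j}$, whereas $\|g_j\|_{L_p(\bR^d)}$ grows at most polynomially in $c_s^{\,j}$ because $s$ has two-sided polynomial bounds by Proposition \ref{poly} and membership in $l_q^{s;\gamma}(L_p(\bR^d))$ forces $\|g_j\|_{L_p}=O(s(c_s^{-j})^{\gamma/2})$. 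The identity $PI=\mathrm{id}$ then follows from $\cF[\varphi_j]\cF[\vartheta_j]=\cF[\varphi_j]$ and $\sum_j\cF[\varphi_j]\equiv1$: indeed $PI(f)=\cF^{-1}\big[\big(\sum_j\cF[\varphi_j]\cF[\vartheta_j]\big)\cF[f]\big]=\cF^{-1}\big[\big(\sum_j\cF[\varphi_j]\big)\cF[f]\big]=f$.

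The real work is the boundedness $\|P((g_j)_j)\|_{B_{p,q}^{s,\varphi;\gamma}(\bR^d)}\le N\|(g_j)_j\|_{l_q^{s;\gamma}(L_p(\bR^d))}$. Writing $h:=P((g_j)_j)$, the frequency supports of $\cF[\vartheta_j]$ and $\cF[\varphi_k]$ overlap only when $|j-k|\le 3$, hence $h\ast\varphi_k=\sum_{|j-k|\le3}g_j\ast\vartheta_j\ast\varphi_k$, and Young's convolution inequality together with $\|\vartheta_j\ast\varphi_k\|_{L_1(\bR^d)}\le\|\vartheta_j\|_{L_1}\|\varphi_k\|_{L_1}\le N$ yields $\|h\ast\varphi_k\|_{L_p(\bR^d)}\le N\sum_{|j-k|\le3}\|g_j\|_{L_p(\bR^d)}$. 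Inserting this into the $l_q^{s;\gamma}$-norm, using that $s(c_s^{-k})$ and $s(c_s^{-j})$ are comparable with a constant depending only on $\boldsymbol{s}$ whenever $|j-k|\le3$ (immediate from \eqref{20.04.28.13.46}), and a routine reindexing finish the estimate; one splits into $q\ge1$ (Minkowski's inequality, or $(\sum_{i=1}^{7}a_i)^q\le 7^{q-1}\sum a_i^q$), $0<q<1$ (subadditivity $(\sum a_i)^q\le\sum a_i^q$), and $q=\infty$ (suprema), the $k=0$ term being controlled separately since only the fixed indices $j=0,1,2,3$ with fixed weights enter there.

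The main obstacle I anticipate is purely bookkeeping: pinning down the index-overlap count and the weight-comparability constants uniformly over $q\in(0,\infty]$, and being slightly careful about the $\cS'$-convergence of the defining series for $P$ when $p$ or $q$ equals $\infty$. No genuinely new idea beyond the almost orthogonality of Remark \ref{20.04.29.20.00} and the polynomial two-sided control of $s$ from Proposition \ref{poly} is required; once the retraction $(I,P)$ is in place, the remaining items of Proposition \ref{20.04.29.20.24} follow by transporting the corresponding properties of $l_q^{s;\gamma}(L_p(\bR^d))$ through $I$ and $P$.
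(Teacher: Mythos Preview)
Your proof is correct and takes essentially the same approach as the paper: your synthesis map $P((g_j)_j)=\sum_j g_j\ast\vartheta_j$ with $\vartheta_j=\varphi_{j-1}+\varphi_j+\varphi_{j+1}$ is literally the paper's map $P(\boldsymbol{f})=\sum_{j}\sum_{l=-1}^{1}f_j\ast\varphi_{j+l}$, and the boundedness is obtained in the same way via almost orthogonality, Young's inequality, and the weight comparability \eqref{20.04.28.13.46}. The only differences are cosmetic---the paper gets the slightly sharper overlap bound $|j-k|\le 2$ (rather than $3$) and omits your explicit discussion of $\cS'$-convergence and the case split in $q$.
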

\begin{proof}
Consider two maps ; 
\begin{equation*}
f\in B_{p,q}^{s,\varphi;\gamma}(\bR^d) \mapsto I(f):=(f\ast\varphi_0,f\ast\varphi_1,\cdots)
\end{equation*}
and
\begin{equation*}
\boldsymbol{f}=(f_0,f_1,\cdots)\in l_q^{s;\gamma}(L_p(\bR^d)) \mapsto P(\boldsymbol{f}):=\sum_{j=0}^{\infty}\sum_{l=-1}^1f_{j}\ast\varphi_{j+l},
\end{equation*}
where $\varphi_j=f_j=0$ if $j<0$. 
It is obvious that $PI$ is an identity operator in $B_{p,q}^{s,\varphi;\gamma}(\bR^d)$,
$$
\|f\|_{B_{p,q}^{s,\varphi;\gamma}(\bR^d)}=\|I(f)\|_{l_q^{s;\gamma}(L_p(\bR^d))},
$$
and $I$ is a linear transformation from $B_{p,q}^{s,\varphi;\gamma}(\bR^d)$ to $l_q^{s;\gamma}(L_p(\bR^d))$. 
By \eqref{almost orthogonality}, for $r\in \bN\cup\{0\}$,
\begin{equation*}
 \|P(\boldsymbol{f})\ast \varphi_r\|_{L_p(\bR^d)}\leq N \sum_{j=r-2}^{r+2}\|f_j\|_{L_p(\bR^d)},   
\end{equation*}
where $N$ depends only on $\varphi$ and $\varphi_0$. 
Moreover, by \eqref{20.04.28.13.46},
\begin{equation}
\label{20.08.19.10.31}
    s(c_s^{-r})^{-\frac{\gamma}{2}}\|P(\boldsymbol{f})\ast\varphi_r\|_{L_p(\bR^d)}\leq N\sum_{j=r-2}^{r+2}s(c_s^{-j})^{-\frac{\gamma}{2}}\|f_j\|_{L_p(\bR^d)},
\end{equation}
where $N=N(\|\varphi\|_{L_1(\bR^d)},\|\varphi_0\|_{L_1(\bR^d)},\gamma,s_U(c_s^2),s_U(c_s),s_L(c_s^2),s_L(c_s))$. By \eqref{20.08.19.10.31},
\begin{equation*}
    \begin{aligned}
    \|P(\boldsymbol{f})\|_{B_{p,q}^{s,\varphi;\gamma}(\bR^d)} \leq N\|\boldsymbol{f}\|_{l_q^{s;\gamma}(L_p(\bR^d))},\quad q\in(0,\infty],
    \end{aligned}
\end{equation*}
where $N$ is independent of $\boldsymbol{f}$. The above inequality implies that $P$ is a linear transformation from $l_q^{s;\gamma}(L_p(\bR^d))$ to $B_{p,q}^{s,\varphi;\gamma}(\bR^d)$. The proposition is proved.
\end{proof}

The following result yields that all scaling functions have a polynomial growth.
\begin{prop}
						\label{poly}
Let $\boldsymbol{s}=(s,s_L,s_U)$ be a scaling triple and
$$
\theta_0:=\log_{c_s}(s_L(c_s)),\quad\theta_1:=\log_{c_s}(s_U(c_s)).
$$
Then $0<\theta_0\leq\theta_1$ and there exists a positive constant $C_0$ such that  for all $0<r\leq 1$,
\begin{align}
							\label{2020082402}
C_0^{-1}r^{\theta_1}\leq s(r)\leq C_0r^{\theta_0},
\end{align}
where $C_0$ depends only on $s(1),s_L(1),s_U(1), s_L(c_s^{-1})$, and $s_U(c_s)$.
\end{prop}
\begin{proof} 
Recalling the definition of $c_s$ \eqref{2020082401}, we have
$$
1<s_L(c_s)\leq \frac{s(1)}{s(c_s^{-1})}\leq s_U(c_s).
$$
Thus, the first assertion $0<\theta_0 \leq \theta_1$ is easily proved.
To prove \eqref{2020082402}, observe that for each $j\in\bN$, 
\begin{equation*}
    \frac{1}{s(c_s^{-j})}=\frac{1}{s(1)}\prod_{k=0}^{j-1}\frac{s(c_s^{-j+k+1})}{s(c_s^{-j+k})}.
\end{equation*}
Applying \eqref{20.04.28.13.46} to the above products, we have
\begin{align}
\label{scaling1}
s(1)\left(s_U(c_s)\right)^{-j}\leq  s(c_s^{-j})\leq s(1)\left(s_L(c_s)\right)^{-j}.
\end{align}
Note that
$$
(0,1]=\bigcup_{j=1}^{\infty}(c_s^{-j},c_s^{-j+1}].
$$
Thus for $r\in(0,1]$, there exists a unique $j\in\bN$ such that $r\in(c_s^{-j},c_s^{-j+1}]$. By \eqref{20.04.28.13.46} and  \eqref{scaling1},
\begin{equation*}
\begin{aligned}
    s(r)&\leq s_U(rc_s^j)s(c_s^{-j})\leq s_U(c_s)s(1)\left(s_L(c_s)\right)^{-j}= s_U(c_s)s(1)\left (c_s^{-j}\right)^{\log_{c_s}(s_L(c_s))}\\
    &\leq s_U(c_s)s(1)r^{\log_{c_s}(s_L(c_s))}=s_U(c_s)s(1)r^{\theta_0}
\end{aligned}
\end{equation*}
and
\begin{equation*}
\begin{aligned}
    s(r)&\geq s_L(rc_s^j)s(c_s^{-j})\geq s_L(1)s(1)\left(s_U(c_s)\right)^{-j}= s_L(1)s(1)s_U(c_s)^{-1}\left (c_s^{-j+1}\right)^{\log_{c_s}(s_U(c_s))}\\
    &\geq s_L(1)s(1)s_U(c_s)^{-1}r^{\log_{c_s}(s_U(c_s))}=:s_L(1)s(1)s_U(c_s)^{-1}r^{\theta_1}.
\end{aligned}
\end{equation*}
The proposition is proved.
\end{proof}

\textbf{Proof of Proposition \ref{20.04.29.20.24}}
$(i)$ By Proposition \ref{poly}, for $j\geq0$,
\begin{equation}
\label{20.08.19.10.57}
    C_0^{-1}2^{jm_s\theta_0}=C_0^{-1}c_s^{j\theta_0}\leq s(c_s^{-j})^{-1}\leq C_0 c_s^{j\theta_1}=C_0 2^{jm_s\theta_1},
\end{equation}
where $C_0=C_0(s(1),s_L(1),s_U(1),s_L(c_s^{-1}),s_U(c_s))$. For $\zeta\in\Phi_2(\bR^d)$, we have
\begin{equation}
\label{20.08.19.10.55}
\cF[f]\cF[\varphi_j]=
\begin{cases}
\cF[f](\cF[\zeta_0]+\cF[\zeta_1]+\cdots+\cF[\zeta_{m_s}])\cF[\varphi_0], & \text{if } j=0,\\
\cF[f](\cF[\zeta_{m_s(j-1)}]+\cdots+\cF[\zeta_{m_s(j+1)}])\cF[\varphi_j], & \text{if } j\geq1.
\end{cases}
\end{equation}
Similarly, for $l\in\bN\cup\{0\}$, we have
\begin{equation}
\label{20.08.19.10.56}
\begin{gathered}
\cF[f]\cF[\zeta_j]=
\begin{cases}
\cF[f]\{\cF[\varphi_{l-1}]+\cF[\varphi_{l}]+\cF[\varphi_{l+1}]\}\cF[\zeta_j],&\text{if } j=m_sl,\\
\cF[f]\{\cF[\varphi_{l}]+\cF[\varphi_{l+1}]\}\cF[\zeta_j],&\text{if } m_sl+1\leq j\leq m_s(l+1)-1.
\end{cases}
\end{gathered}
\end{equation}
Recalling Definition \ref{20.08.20.17.26} and using \eqref{20.08.19.10.57}, \eqref{20.08.19.10.55} and \eqref{20.08.19.10.56}, we obtain the result.

\vspace{3mm}

$(ii)$ It suffices to prove the completeness. Let $\{f_n\}_{n=1}^{\infty}\subseteq B_{p,q}^{s,\varphi;\gamma}(\bR^d)$ be a Cauchy sequence.
Then, there exists a $\boldsymbol{f}:=(f^1,f^2,\cdots)\in l_q^{s;\gamma}(L_p(\bR^d))$ such that
$$
I(f_n)=(f_n\ast\varphi_0,f_n\ast\varphi_1,\cdots)\to\boldsymbol{f},\quad \text{in } l_q^{s;\gamma}(L_p(\bR^d)).
$$
Since $f_n\ast\varphi_j\to f^j$ in $L_p(\bR^d)$ as $n\to\infty$, we observe that for $\phi\in\cS(\bR^d)$,
\begin{equation}
\label{20.08.17.11.38}
\begin{aligned}
(f^j,\phi)&=\lim_{n\to\infty}(f_n\ast\varphi_j,\phi)=\lim_{n\to\infty}(f_n\ast\varphi_j\ast(\varphi_{j-1}+\varphi_j+\varphi_{j+1}),\phi)\\
&=(f^j\ast(\varphi_{j-1}+\varphi_j+\varphi_{j+1}),\phi),
\end{aligned}
\end{equation}
where $\varphi_j=0$ if $j<0$. Let
$$
f:=P(\boldsymbol{f})=\sum_{j=0}^{\infty}\sum_{l=-1}^1f^j\ast\varphi_{j+l}=\sum_{j=0}^{\infty}f^j.
$$
By Lemma \ref{20.06.28.16.49},
$$
\|f\|_{B_{p,q}^{s,\varphi;\gamma}(\bR^d)}\leq N\|\boldsymbol{f}\|_{l_q^{s;\gamma}(L_p(\bR^d))}<\infty,
$$
and this also implies $f_n$ converges to $f$ in $B_{p,q}^{s,\varphi;\gamma}(\bR^d)$.

\vspace{3mm}

($iii$)  
It is well-known that the Schwartz class is dense in the classical Besov spaces (e.g. \cite[Theorem 2.3.3]{triebel1983theory}), i.e.
$$
\cS(\bR^d)\subseteq B_{p,q}^{\gamma'}(\bR^d)
$$
for all $\gamma'\in\bR$.
Thus by $(i)$, 
$$
\cS(\bR^d)\subseteq B_{p,q}^{s,\varphi;\gamma}(\bR^d).
$$

Next, we claim that 
$$
\tilde C^\infty(\bR^d)\subseteq B_{p,q}^{\gamma}(\bR^d),\quad \forall \gamma\in\bR.
$$
First, assume $\gamma\in(0,1)$. We use an equivalent norm of the classical Besov spaces. 
By \cite[Remark 2 in p.113]{triebel1983theory},
\begin{align}
							\label{2020082701}
\|f\|_{B_{p,q}^{\gamma}(\bR^d)}\approx \|f\|_{L_p(\bR^d)}+\left(\int_{\bR^d}|h|^{-d-\gamma q}\|\Delta_h^2f\|_{L_p(\bR^d)}^qdh\right)^{1/q},
\end{align}
where $\Delta_h^2f(x):=f(x+2h)-2f(x+h)+f(x)$. For $f\in\tilde{C}^{\infty}(\bR^d)$,
$$
\|\Delta_h^2f\|_{L_p(\bR^d)} \leq |h|^2\sum_{|\alpha|=2}\|D^{\alpha}f\|_{L_p(\bR^d)}1_{|h|\leq 1}+3\|f\|_{L_p(\bR^d)}1_{|h|>1}\leq N(|h|^21_{|h|\leq 1}+1_{|h|>1}),
$$
where $N$ depends only on $\|f\|_{L_p(\bR^d)}$ and $\sum_{|\alpha|=2}\|D^{\alpha}f\|_{L_p(\bR^d)}$.
Thus, the right-hand side of \eqref{2020082701} is finite and it implies that
\begin{align}
							\label{2020082710}
\tilde{C}^{\infty}(\bR^d)\subseteq B_{p,q}^{\gamma}(\bR^d),\quad \forall \gamma\in(0,1).
\end{align}
For $n\in\bN$ and $f\in\tilde{C}^{\infty}(\bR^d)$, we have
$$
(1-\Delta)^{n/2}f,(1-\Delta)^{-n/2}f\in \tilde{C}^{\infty}(\bR^d).
$$
Thus, for each $\gamma \in \bR$, finding $n\in\bZ$ such that $\gamma = n+\gamma'$ with $\gamma' \in (0,1)$ and recalling \eqref{2020082710}, we have
\begin{align*}
\tilde{C}^{\infty}(\bR^d)\subseteq B_{p,q}^{\gamma}(\bR^d).
\end{align*}
Due to (i), we finally obtain
$$
\tilde{C}^{\infty}(\bR^d)\subseteq B_{p,q}^{m_s\theta_1\gamma}(\bR^d)\subseteq B_{p,q}^{s,\varphi;\gamma}(\bR^d),\quad \forall\gamma\in\bR.
$$
To prove the denseness of $C_c^\infty(\bR^d)$, choose a Littlewood-Paley function $\varphi\in  \Phi_{c_s}(\bR^d)$ and define
$$
\eta_k(x):=\sum_{j=0}^k\varphi_j(x)\qquad \forall k \geq 2.
$$
Then, obviously $\eta_k\in\cS(\bR^d)$.
Observe that for all $\xi \in \bR^d$,
\begin{align}
								\label{2020082720}
\cF[\varphi_j](\xi)=\cF[\varphi_j](\xi) \cF[\eta_k](\xi) \qquad \forall j =\{0,1,\ldots, k-1\}
\end{align}
and
\begin{align}
								\label{2020082721}
\cF[\varphi_j](\xi)\cF[\eta_k](\xi) =0  \qquad \forall j =\{k+2,k+3,\ldots\}.
\end{align}
Thus, for each $f\in B_{p,q}^{s,\varphi;\gamma}(\bR^d)$, applying \eqref{2020082720} we have
\begin{equation*}
    \|f-\eta_k\ast f\|_{B_{p,q}^{s,\varphi;\gamma}(\bR^d)}^q\leq N\sum_{j=k}^{\infty}s(c_s^{-j})^{-\frac{q\gamma}{2}}\|f\ast\varphi_j\|_{L_p(\bR^d)}^q,
\end{equation*}
and $\|f-\eta_k\ast f\|_{B_{p,q}^{s,\varphi;\gamma}(\bR^d)}$ goes to zero as $k \to \infty$,
where $N$ is independent of $k$. 
Moreover, it is obvious that $\eta_k\ast f \in B_{p,q}^{\gamma'}(\bR^d)$ for all  $\gamma' \in \bR$ and
it is well-known that $C_c^\infty(\bR^d)$ is dense in the classical Besov spaces $B_{p,q}^{\gamma'}(\bR^d)$ for all $\gamma' \in \bR$.
Therefore, there exists a sequence $f_n \in C_c^\infty(\bR^d)$ such that
$$
f_n \to \eta_k\ast f  \quad \text{in} \quad  B_{p,q}^{m_s\theta_1\gamma}(\bR^d).
$$
Finally applying (i), we conclude that $C_c^\infty(\bR^d)$ is dense in $B_{p,q}^{s,\varphi;\gamma}(\bR^d)$.

\vspace{3mm}

$(iv)$ It is an easy consequence of the almost orthogonal properties of Littlewood-Paley functions (cf. Remark \ref{20.04.29.20.00}).

\vspace{3mm}

$(v)$ For $j\geq1$, $-\infty<\gamma_1\leq\gamma_2<\infty$
$$
s(c_s^{-j})^{-\frac{\gamma_1}{2}}\leq s(c_s^{-j})^{-\frac{\gamma_2}{2}}.
$$
This implies the first inequality in \eqref{20.08.20.10.49}. For $q_1\in(0,\infty)$,
\begin{equation}
\label{20.08.20.13.20}
\begin{aligned}
    \sup_{j\in\bN}s(c_s^{-j})^{-\frac{q_1\gamma}{2}}\|f\ast\varphi_j\|_{L_p(\bR^d)}^{q_1}\leq \sum_{j=1}^{\infty}s(c_s^{-j})^{-\frac{q_1\gamma}{2}}\|f\ast\varphi_j\|_{L_p(\bR^d)}^{q_1}.
\end{aligned}
\end{equation}
For $0<q_1\leq q_2<\infty$,
\begin{equation}
\label{20.08.20.13.21}
    \left(\sum_{j=1}^{\infty}s(c_s^{-j})^{-\frac{q_2\gamma }{2}}\|f\ast\varphi_j\|_{L_p(\bR^d)}^{q_2}\right)^{\frac{q_1}{q_2}}\leq \sum_{j=1}^{\infty}s(c_s^{-j})^{-\frac{q_1\gamma}{2}}\|f\ast\varphi_j\|_{L_p(\bR^d)}^{q_1}.
\end{equation}
By \eqref{20.08.20.13.20} and \eqref{20.08.20.13.21}, we obtain the second inequality in \eqref{20.08.20.10.49}.

\vspace{3mm}

$(vi)$ By $(i)$ and properties of classical Besov space \cite[Theorem 2.6.1]{triebel1978interpolation}, 
$$
B_{p',q'}^{-m_s\theta_0\gamma}(\bR^d)\subseteq\left(B_{p,q}^{s,\varphi;\gamma}(\bR^d)\right)^*\subseteq B_{p',q'}^{-m_s\theta_1\gamma}(\bR^d)\subseteq \cS'(\bR^d),
$$
where $\left(B_{p,q}^{s,\varphi;\gamma}(\bR^d)\right)^*$ is a topological dual space of $B_{p,q}^{s,\varphi;\gamma}(\bR^d)$.

\vspace{3mm}

($vii$) 
Note that
$$
\left(s(c_s^{-j})^{-\frac{q_0\gamma_0}{2}}\right)^{\frac{q(1-\theta)}{q_0}}\left(s(c_s^{-j})^{-\frac{q_1\gamma_1}{2}}\right)^{\frac{q\theta}{q_1}}=s(c_s^{-j})^{-\frac{q\gamma}{2}}.
$$
Then by Theorem \cite[Theorem 5.5.3]{bergh2012interpolation}, we obtain the result.

\vspace{3mm}

($viii$) By ($vi$) and \cite[Theorem 6.4.2]{bergh2012interpolation}, 
$$
[B_{p,q_0}^{s;\gamma_0}(\bR^d),B_{p,q_1}^{s;\gamma_1}(\bR^d)]_{\theta}
$$
is a retract of $l_q^{s;
\gamma}(L_p(\bR^d))$ with the mappings $I$ and $P$ defined in Lemma \ref{20.06.28.16.49}. The proposition is proved.


\end{document}